\numberwithin{equation}{subsection}
\newtheorem{theorem}{Theorem}[section]
\newtheorem{corollary}[theorem]{Corollary}
\newtheorem{definition}[theorem]{Definition}
\newtheorem{example}[theorem]{Example}
\newtheorem{lemma}[theorem]{Lemma}
\newtheorem{notation}[theorem]{Notation}
\newtheorem{proposition}[theorem]{Proposition}
\newtheorem{remark}[theorem]{Remark}
\newcommand{\eat}[1]{}
\renewcommand{\P}{\mathbb{P}}
\newcommand{\sslash}{\mathbin{/\mkern-6mu/}}
\newcommand{\RNum}[1]{\uppercase\expandafter{\romannumeral #1\relax}}
\title{Universal parabolic moduli over $\overline{M}_{_{g, n}}$}
\begin{document}
	\author{Jagadish Pine} 
		
	\thanks{email id : pine@cmi.ac.in}
	\date{11-04-2022}
	\maketitle
	
	{\small  Department of Mathematics, Chennai Mathematical Institute,  H1, Sipcot IT Park, Kelambakkam, Siruseri, Tamil Nadu 603103 \\
}
	\begin{abstract}
	In this article we will construct a universal moduli space of stable parabolic vector bundles over the moduli space of marked Deligne-Mumford stable curves $\overline{M}_{_{g, n}}$. The objects that appear over the boundary of  $\overline{M}_{_{g, n}}$ i.e., over singular curves will remain  vector bundles. The total space and the fibers over $\overline{M}_{_{g, n}}$ will have good singularities.
\end{abstract} \hspace{10pt}	

{\bf Keywords:} Parabolic torsion free sheaves, Gieseker bundle, Deligne-Mumford marked stable curve, universal moduli.

	% Word count
	\verbatiminput{\jobname.wordcount.tex}

	\section*{introduction}
	Let $(C, x^1, x^2, \cdots, x^n)$ be a marked semistable curve \eqref{markedsemistablecurve} and $\pi:C \rightarrow C'$ be the canonical contraction to the marked stable curve \eqref{modulistablecurves} $(C', x^1, x^2, \cdots, x^n)$, where the points $\{x^i: 1 \le i \le n\}$ are on the isomorphism locus of $\pi$. Given a vector bundle $E$ on the curve $C$ such that the restriction of $E$ to any rational component( $\cong \P^1$) of $C$ is strictly positive \eqref{positivebundle}, we define parabolic structures of $E$ at the points $\{x^i: 1 \le i \le n\}$ in the sense of Mehta-Seshadri. Following Nagaraj-Seshadri \cite{MR1687729}, we define such a parabolic bundle $E_*$ to be stable if $\pi_*E_*$ is a $p_2$-stable pure sheaf of dimension $1$ \eqref{stabilityofparGiebun} on  the marked stable curve $C'$ . We will denote such stable parabolic bundle $E_*$ as pair $(C, E_*)$ over the curve $C'$. Let $\overline{M}_{_{g, n}}$ be the moduli space of marked stable curves of genus $g \ge 2$ and number of marked points $n \ge 1$ \eqref{modulistablecurves}. Our aim in this article is to construct a universal moduli space $\overline{\mathfrak{U}}_{_{g, n, r }}= \overline{\mathfrak{U}}_{_{g, n}}(r, d, r^i_j, \alpha^i_j)$ over $\overline{M}_{_{g, n}}$. This moduli space $\overline{\mathfrak{U}}_{_{g, n, r }}$ parameterizes stable parabolic Gieseker pairs $(C, E_*)$ \eqref{Giesekerbundle} \eqref{stabilityofparGiebun}  with fixed numerical invariants rank $r$, degree $d$, quasi parabolic structures $r^i_j$, rational weights $\alpha^i_j$. Dirk Schl\"{u}ter in \cite{schluter2011universal} has shown that there exist a universal moduli space  $\overline{\mathcal{U}}_{_{g, n, r}}$ of parabolic pure sheaves over $\overline{M}_{_{g, n}}$. The fiber of the moduli space $\overline{\mathcal{U}}_{_{g, n, r}}$ over $[C'] \in \overline{M}_{_{g, n}}$ is the moduli space of $p_2$-stable \eqref{p_2stability} parabolic pure sheaves on $C'$ modulo $\text{Aut}(C', x^1, \cdots, x^n)$. Schl\"{u}ter follows the approach of Pandharipande \cite{MR1308406}. The novel features in the Gieseker type construction are: \\
	(1) There exist a proper map from the moduli space $\overline{\mathfrak{U}}_{_{g, n, r }}$ to the moduli space $\overline{\mathcal{U}}_{_{g, n, r}}$.\\
	(2) The objects in the degenerate fibers remain locally free.\\
	(3) The resulting moduli space $\overline{\mathfrak{U}}_{_{g, n, r }}$ has nicer singularities \eqref{mainthm2} \eqref{mainthm3} than $\overline{\mathcal{U}}_{_{g, n, r}}$.\\
	 In fact under conditions of semistable=stable \eqref{semistablemoduli}[ $2^{nd}$ paragraph], our moduli stack gives a birational smooth model of the one obtained in \cite{schluter2011universal}.\\
	
	The motivation to study the problem is as follows. Let $\text{G}$ be an almost simple, simply connected algebraic group over $\mathbb{C}$ and $\rho:\text{G} \rightarrow \text{GL}(k, \mathbb{C})$ be a faithful representation of $\text{G}$. Let $C$ be a smooth algebraic curve of genus $g \ge 2$ over $\mathbb{C}$. Then the upper half plane $\mathbb{H}$ is the universal cover and $\mathbb{H} \slash \pi =C$, where $\pi$ is certain Fuchsian group. The $(\pi, \text{G})$ bundles on $\mathbb{H}$ corresponds to parahoric torsors on $C$ \cite{MR3275653}. The construction of a parabolic Gieseker moduli space over $\overline{M}_{g,n}$ has an important application. Let $\mathcal{G}$ be a parahoric group scheme over the smooth projective curve $C$ with generic fibre $\text{G}$, where the parahoric structure is given at a set of $n$ marked points.  Then to construct degenerations of the moduli space $M(\mathcal{G})$ either when the  smooth curve degenerates to an irreducible nodal curve or to construct a universal moduli for $\mathcal{G}$-torsors over $\overline{M}_{g,n}$, one can follow the classical construction due to Ramanathan. This can be achieved by viewing the parahoric torsors as ``reduction of structure groups" of the parabolic Gieseker bundles. Then the stack of $\mathcal{G}$-torsors gets constructed from the stack of parabolic Gieseker bundles over $\overline{M}_{g,n}$ \cite{MR1420170}, \cite{balaji2022torsors}.\\
	
	There is no  torsion free analogue for the degeneration of $G$ bundles when a smooth curve degenerates to a nodal curve except for symplectic and orthogonal case due to Faltings \cite{MR1375622}. The only known way to study the degeneration problem of $G$ bundles is by using Gieseker type degeneration \cite{balaji2022torsors}. The following is a brief history of Gieseker type degeneration.\\ 
	
	 The first degeneration of this nature was constructed by Gieseker \cite{MR739786} for the rank $2$ and odd degree to prove the Newstead-Ramanan conjecture \cite[Theorem 1.1]{MR739786} by degeneration to an irreducible nodal curve. Later Nagaraj-Seshadri defined an equivalent notion of stability more in the spirit of Mumford stability to generalize the construction to arbitary rank $r$, degree  $d$ with g.c.d $(r, d)=1$ \cite{MR1687729} ; using this notion of stability, Schmitt constructed a universal moduli space of vector bundles over $\overline{M}_{_{g}}$ \cite{MR2106123} when the genus $g$ is greater than $1$.\\
	 
	 To state the main results of the article we will need the following definitions and notations. 

	\begin{definition}
		\begin{enumerate}
			\label{markedprestablecurves}
				\item A {\em marked prestable} curve $C$ is a reduced, connected,  projective curve whose singularities are nodal singularities with $n$ distinct nonsingular points on it.
				
				\label{markedsemistablecurve} 
			\item A {\em marked prestable curve} $C$ of genus $g \ge 2$ is called marked semistable if every non singular rational components ($\cong \P^1$) contains at least $2$ special points i.e., either marked points or singular points.
		\end{enumerate}
		
	\end{definition}

\begin{definition}
	Let $C$ and $D$ be two marked semistable curves and $E_*, F_*$ be two strictly positive $p_2$-stable parabolic vector bundle on $C$ and $D$ respectively. Then the pair $(C, E_*), (D, F_*)$ is Aut-equivalent if there exist a marked isomorphism $\phi:C \rightarrow D$ such that we have an isomorphism $E_* \cong \phi^* F_*$ of parabolic bundles \cite[definition 4.2.4]{schluter2011universal}.
\end{definition}

\begin{notation}
  Let $V_l$ be a vector space over complex numbers $\mathbb{C}$ and $r^i_{l_i+1} > r^i_{l_i} > \cdots > r^i_2$ be a decreasing sequence of positive integers for $1 \le i \le n$. Let $\text{Flag}(V_l, r^i_{l_i+1}, r^i_{l_i}, \cdots, r^i_2)$ be the flag variety of successive quotient spaces of $V_l$  of the form $V_l \rightarrow Q^i_{l_i+1} \rightarrow Q^i_{l_i} \rightarrow \cdots \rightarrow Q^i_2$, where $\text{dim}(Q^i_j)=r^i_j$ for $2 \le j \le (l_i+1)$. Then we denote by $\text{Fl}(V_l, \mathbb{C})=\prod_{i=1}^{n} \text{Flag}(V_l, r^i_{l_i+1}, r^i_{l_i}, \cdots, r^i_2)$.
\end{notation}

	The main result of this article is the following
	
	\begin{theorem}
		\label{mainthm1}
		There exist a projective variety $\overline{\mathfrak{U}}_{_{g, n, r }}$ over $\overline{M}_{_{g,n}}$ such that the fiber over a marked stable curve $C'$ parameterizes aut-equivalance classes of pairs $(C, E_*)$ where $C$ is a marked semi-stable curve whose fixed marked stable model is $C'$ and $E_*$ is a stable parabolic Gieseker bundle of fixed numerical type on $C$. Furthermore we have the following commutative diagram
		
		\begin{equation}
		\label{maindiagram}
			 \begin{tikzcd}
			\overline{\mathfrak{U}}_{_{g, n, r }} \arrow[rr, "\text{projective}" near start, "\text{birational}"', "\pi_*" near end] \arrow[dr, "\kappa_g"]
			& &\overline{\mathcal{U}}_{_{g, n, r}} \arrow[dl, "\eta"]\\%
			& \overline{M}_{_{g, n}}
			\end{tikzcd}
		\end{equation}
	
	\end{theorem}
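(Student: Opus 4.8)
The plan is to follow the classical GIT-quotient strategy of Gieseker--Nagaraj--Seshadri and Schmitt, adapted to the parabolic setting over a varying base curve, and then to compare the resulting space with Schl\"{u}ter's moduli space $\overline{\mathcal{U}}_{_{g,n,r}}$. First I would fix a Hilbert-polynomial--rigidifying datum: choose $N$ large enough that every stable parabolic Gieseker bundle $E_*$ of the fixed numerical type on every marked semistable model $C$ of a stable curve $C'$ is such that $E(N)$ is globally generated with vanishing higher cohomology, so that $H^0(C, E(N))$ has a fixed dimension $P(N)$. This embeds every such pair $(C, E_*)$, together with a choice of basis of $H^0$, as a point in a suitable parameter scheme: a locally closed subscheme $\mathcal{H}$ of a product of a Hilbert scheme of curves in $\mathbb{P}^M$ (carrying the Gieseker model $C$), a Quot scheme over the universal curve (carrying the quotient $\mathcal{O}^{P(N)} \twoheadrightarrow E(N)$), and the relative flag bundle $\mathrm{Fl}(V_l, \mathbb{C})$ from the Notation (carrying the quasi-parabolic structure at the $n$ sections). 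The group $\mathrm{PGL}(P(N))$, together with the relevant $\mathrm{PGL}$ of the projective space containing the Gieseker curves, acts on $\mathcal{H}$, and $\overline{\mathfrak{U}}_{_{g,n,r}}$ is to be constructed as the GIT quotient of (the closure of) the locus of stable parabolic pairs.

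The key steps, in order, are: (i) construct the parameter space $\mathcal{H}$ and verify it is a quasi-projective scheme with the stated group action, using the boundedness of the family of stable parabolic Gieseker bundles of fixed numerical type (this boundedness is where the strict positivity on rational components and the $p_2$-stability of $\pi_* E_*$ enter, exactly as in Nagaraj--Seshadri); (ii) choose a polarization on $\mathcal{H}$ depending on the weights $\alpha^i_j$ so that the Hilbert--Mumford numerical criterion for GIT (semi)stability matches the intrinsic $p_2$-stability of $\pi_* E_*$ --- here one runs the one-parameter-subgroup computation, destabilizing one-parameter subgroups corresponding to sub- and quotient sheaves and to the flag filtrations, and shows GIT-stable $=$ our stable parabolic Gieseker pair; (iii) form the GIT quotient $\overline{\mathfrak{U}}_{_{g,n,r}} := \mathcal{H}^{ss}\sslash \mathrm{PGL}$, which is projective (indeed the quotient of a closed subscheme of a product of projective schemes), and check that the closed points are exactly the Aut-equivalence classes of pairs $(C, E_*)$ with marked stable model $C'$; (iv) define $\kappa_g$ by sending $(C, E_*)$ to the marked stable model $C' = \pi(C) \in \overline{M}_{_{g,n}}$ --- this is a morphism because the stabilization $\pi$ varies in families --- and define $\pi_*$ by $(C, E_*) \mapsto (C', \pi_* E_*)$, landing in $\overline{\mathcal{U}}_{_{g,n,r}}$ by the very definition of stability adopted here (following Nagaraj--Seshadri). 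Commutativity $\eta \circ \pi_* = \kappa_g$ is then immediate since both send $(C, E_*)$ to $C'$. Properness of $\pi_*$ follows from the valuative criterion together with the properness of the Gieseker functor: a family of stable $p_2$-stable parabolic pure sheaves over a punctured disc admits, after base change, a unique extension whose central fibre is a parabolic Gieseker bundle on a semistable model --- this is the relative version of the Gieseker degeneration. Birationality holds because over the interior (and more generally wherever the central fibre sheaf is already locally free, so no blow-up of the curve is needed) the map $\pi_*$ is an isomorphism, and the locus where a Gieseker model is genuinely required is of lower dimension.

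The main obstacle I expect is step (ii): matching the GIT-stability on the parameter space $\mathcal{H}$ with the intrinsic notion of stability of the parabolic Gieseker pair. The Hilbert--Mumford analysis is delicate because $\mathcal{H}$ has three interacting factors (the curve in a projective space, the Quot-scheme point, the flag), and a one-parameter subgroup acts on all three simultaneously; one must bound the relevant numerical function uniformly, handle the contribution of the unstable (too positive) subsheaves supported on rational chains of the Gieseker curve, and verify that the semistable replacement produced by GIT is again a pair of the desired type rather than, say, a pair with a bubble on which the bundle fails to be strictly positive. Getting the polarization weights on $\mathcal{H}$ precisely calibrated to $\alpha^i_j$ and to $d$ --- so that the admissible destabilizing one-parameter subgroups correspond bijectively to the parabolic subsheaves appearing in the definition of $p_2$-stability --- is the technical heart of the construction. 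Once this calibration is in place, properness of $\pi_*$ and the commutativity of diagram~\eqref{maindiagram} are comparatively formal, resting on the valuative criterion and on the fact that both $\kappa_g$ and $\eta \circ \pi_*$ are the ``forget everything but the stable curve'' map.
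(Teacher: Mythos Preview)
Your outline is coherent and would in principle lead to a proof, but it takes a route that differs in one essential respect from the paper's, and the difference is precisely at the point you flag as the main obstacle.

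You propose to run the Hilbert--Mumford criterion directly on the parameter space $\mathcal{H}$ and to calibrate a polarization so that GIT-(semi)stability coincides with the intrinsic notion (i.e.\ $p_2$-stability of $\pi_*E_*$). The paper deliberately \emph{avoids} this computation. Instead it exploits that Schl\"{u}ter has already carried out the Hilbert--Mumford analysis on the torsion-free side, identifying $\text{F}_l^s$ inside $\overline{{\tt S_{_{g,n}}}}\times\textbf{Gr}(l,k)$ for a specific linearization $\overline{L}$. The paper then establishes a fibre-product identity $\mathfrak{F}_l\cong \text{F}_l\times_{\text{Q}^r_g}\text{Y}_{_{g,n}}$ and shows that the induced map $\eta:\mathfrak{F}_l^f\to\text{F}_l^f$ is proper (this is the properness of $\theta:\text{Y}^f\to\text{R}^f$, proved by the Gieseker-blow-up lemma, base-changed through the fibre square). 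One then embeds $\mathfrak{F}_l^f$ in a projective $Z$ equipped with the linearization $\overline{L}^{\otimes a}\boxtimes\mathcal{O}_{Z_1}(1)$ and invokes the general GIT comparison lemma $\lambda^{-1}(Y^s)\subseteq(X\times Y)^s\subseteq(X\times Y)^{ss}\subseteq\lambda^{-1}(Y^{ss})$ for $a\gg0$. Properness of $\eta$ plus irreducibility forces $\mathfrak{F}_l^s=\lambda^{-1}(\text{F}_l^s)\subseteq Z^s$, so the GIT quotient exists with no new numerical-criterion work, and the proper birational map to $\overline{\mathcal{U}}_{_{g,n,r}}$ drops out of the diagram.

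What each approach buys: yours is self-contained and in the spirit of Schmitt's treatment of the non-parabolic case, but it requires controlling one-parameter subgroups that simultaneously deform the Gieseker curve, the quotient, and the flag --- exactly the delicate interaction you worry about. The paper's approach trades that analysis for two structural ingredients: the fibre-product identification of the parabolic Gieseker flag scheme with the pullback of Schl\"{u}ter's flag scheme, and the elementary GIT lemma transferring stability along a proper equivariant map. In particular, the ``calibration of polarization weights to $\alpha^i_j$'' that you anticipate as the technical heart is inherited wholesale from Schl\"{u}ter and never redone.
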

	
	The following two theorems describes certain geometric properties of the total space and the fibers of the morphism $\kappa_g$ \eqref{maindiagram}.
	
	\begin{theorem}
		\label{mainthm2}
		The universal moduli $\overline{\mathfrak{U}}_{_{g, n, r }}$ is a normal projective variety with finite quotient singularity. The dimension of the moduli space is $3g-3+n+r^2(g-1)+1+\text{dim}\left(\text{Fl}(V_l, \mathbb{C})\right)$. \\
		
		The variety $\overline{\mathfrak{U}}_{_{g, n, r }}$ has a distinguished smooth open subvariety consisting of strictly stable bundles. 
	\end{theorem}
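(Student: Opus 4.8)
The plan is to realise $\overline{\mathfrak{U}}_{g,n,r}$ as a GIT quotient of a \emph{smooth} parameter scheme and then to read off its local structure from Luna's \'etale slice theorem, so that normality, the type of the singularities, and the smoothness of the strictly stable locus all become formal consequences. Concretely, from the construction behind Theorem~\ref{mainthm1} one has $\overline{\mathfrak{U}}_{g,n,r}=R^{ss}/\!\!/G$, where a reductive group $G$ (essentially $\mathrm{PGL}(N)$, with $N$ large) acts linearly on a quasi-projective scheme and $R^{ss}$ is the open locus of points encoding a triple: a Gieseker curve $C$ over its stable model, a quotient $q\colon\mathcal O_C^{\,N}\twoheadrightarrow E$ with $E$ a strictly positive parabolic Gieseker bundle of the prescribed numerical type, $H^1(E)=0$ and $H^0(q)$ an isomorphism, together with quasi-parabolic flags at the $n$ marked points, all subject to the GIT (semi)stability matching the $p_2$-stability of $\pi_*E_*$. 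The key input is that \emph{$R^{ss}$ is smooth}. I would prove this by presenting $R^{ss}$ as built from iterated smooth morphisms: the parameter space of Gieseker curves over $\overline{M}_{g,n}$ is smooth (established earlier); over it the relevant relative Quot scheme is smooth at $[q]$, because the obstruction lies in $\mathrm{Ext}^1_C(\ker q,E)$ and the exact sequence $0\to\ker q\to\mathcal O_C^{\,N}\to E\to 0$, combined with $\mathrm{Ext}^2_C(E,E)=0$ (as $\dim C=1$) and $\mathrm{Ext}^1_C(\mathcal O_C^{\,N},E)=H^1(E)^{\oplus N}=0$, forces $\mathrm{Ext}^1_C(\ker q,E)=0$; and adjoining the flags at the marked points is a flag-bundle morphism, hence smooth. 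Irreducibility of $R^{ss}$ then follows in the usual way, since it dominates the irreducible $\overline{M}_{g,n}$ with irreducible generic fibre.

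Granting smoothness of $R^{ss}$, normality is automatic: a good quotient of a smooth — hence normal — scheme by a reductive group is normal, and projectivity is part of Theorem~\ref{mainthm1}. By the stability analysis every point of $R^{ss}$ has \emph{finite} $\mathrm{PGL}(N)$-stabilizer; here the strict positivity of $E$ along the rational bubbles is essential, since it prevents the torus automorphisms of a bubbled $\P^1$ from lifting to $E$, so that $\mathrm{Aut}(C,E_*)$ is finite modulo scalars even over the boundary. Hence for $x\in R^{ss}$ with closed orbit and stabilizer $\Gamma=G_x$, Luna's \'etale slice theorem furnishes a $\Gamma$-invariant smooth locally closed slice $S\ni x$ and an \'etale morphism $S/\Gamma\to\overline{\mathfrak{U}}_{g,n,r}$ onto a neighbourhood of $[x]$; since $S$ is smooth and $\Gamma$ finite, $\overline{\mathfrak{U}}_{g,n,r}$ has at worst finite quotient singularities.

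For the last assertion, let $R^{s,\circ}\subset R^{ss}$ be the locus where $\mathrm{Aut}(C,E_*)$ consists only of scalars, equivalently where the $\mathrm{PGL}(N)$-stabilizer is trivial. This is open, since stabilizer dimension is upper semicontinuous and its minimum is attained on an open set, and nonempty, since a general smooth curve (no bubbles) carries strictly stable parabolic bundles. On $R^{s,\circ}$ the $\mathrm{PGL}(N)$-action is free and proper, the quotient map is a $\mathrm{PGL}(N)$-torsor, and so the geometric quotient $R^{s,\circ}/\mathrm{PGL}(N)$ — which is precisely the image of $R^{s,\circ}$ in $\overline{\mathfrak{U}}_{g,n,r}$ — is smooth; this is the distinguished smooth open subvariety of strictly stable bundles. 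For the dimension, since $\overline{\mathfrak{U}}_{g,n,r}$ is irreducible and $\kappa_g$ surjects onto the irreducible $\overline{M}_{g,n}$ of dimension $3g-3+n$, with generic fibre (over a smooth curve) the moduli of stable parabolic bundles of the fixed type, of dimension $r^2(g-1)+1+\dim\bigl(\text{Fl}(V_l,\mathbb{C})\bigr)$ (the $\mathrm{Ext}^1$ of parabolic endomorphisms of a stable parabolic bundle), one obtains
\[
\dim\overline{\mathfrak{U}}_{g,n,r}=3g-3+n+r^2(g-1)+1+\dim\bigl(\text{Fl}(V_l,\mathbb{C})\bigr),
\]
which also equals $\dim R^{ss}-\dim\mathrm{PGL}(N)$ computed on $R^{s,\circ}$.

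The hard part is the smoothness of $R^{ss}$: one must show that the \emph{simultaneous} deformations of the Gieseker curve, of the quotient $q$, and of the flags are unobstructed, not merely the bundle deformations on a fixed curve. Isolating the bundle obstruction in $\mathrm{Ext}^1_C(\ker q,E)$ and annihilating it via $H^1(E)=0$ is the easy half; the real work is to organise the total parameter scheme as a tower of smooth morphisms over the (smooth) parameter space of Gieseker curves, and to verify that GIT-stability there is equivalent to $p_2$-stability of $\pi_*E_*$, so that the slice argument applies at \emph{every} point — together with the check, over the boundary of $\overline{M}_{g,n}$ where the Gieseker curve sprouts rational bubbles, that the automorphism groups remain finite.
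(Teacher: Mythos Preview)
Your strategy matches the paper's: the parameter scheme $\mathfrak{F}_l^s$ is smooth, so the GIT quotient is normal; stabilizers are finite, so Luna's slice theorem gives finite quotient singularities; on the strictly stable locus the action is free and the quotient map is a principal bundle, hence smooth there. Your dimension count via the generic fibre over $\overline{M}_{g,n}$ is correct (the paper states the dimension but does not spell out this step).

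Two points need adjustment. First, the group is $\mathrm{SL}(N)\times\mathrm{SL}(V_l)$, not a single $\mathrm{PGL}(N)$: the factor $\mathrm{SL}(V_l)$ comes from the bundle rigidification $V_l\cong H^0(E(l))$ (this is your $\mathcal{O}_C^{\,N}\twoheadrightarrow E$), while an \emph{independent} factor $\mathrm{SL}(N)$ comes from the pluricanonical embedding $C'\hookrightarrow\P^{N-1}$ underlying the construction of $\overline{M}_{g,n}$. The paper shows the $\mathrm{SL}(V_l)$-stabilizer is the scalar centre (so $\mathrm{PGL}(V_l)$ acts freely, via Lemma~\ref{action is free} and the equivariance of $\eta:\mathfrak{F}_l\to\mathrm{F}_l$), while the $\mathrm{SL}(N)$-stabilizer is the finite group $\mathrm{Aut}(C',x^1,\dots,x^n)$; together these give the finite stabilizer you need. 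Your remark about strict positivity killing the $\mathbb{G}_m$-automorphisms of the rational chains is correct and essential, but in the paper it enters earlier, as the reason $C\hookrightarrow\mathrm{Gr}(V_l,r)$ is a closed immersion and hence $Y_{g,n}$ is a scheme at all.

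Second, and this is the genuine gap: your tower ``parameter space of Gieseker curves $\to$ relative Quot scheme $\to$ flag bundle'' does not exist globally as a tower of smooth \emph{schemes}. There is no smooth scheme parametrising bare Gieseker curves over $S_{g,n}$, precisely because the chains $R^j$ carry $\mathbb{G}_m$-automorphisms; the base of your relative Quot scheme is at best an Artin stack, and the $\mathrm{Ext}^1(\ker q,E)=0$ argument has nothing to sit over. The paper avoids this by never separating curve from bundle: $Y_{g,n}$ is an open subscheme of $\mathrm{Hilb}^{p(t)}(\mathcal{X}\times\mathrm{Gr}(V_l,r))$, encoding both simultaneously, and its regularity (Proposition~\ref{Y is regular}) is proved \emph{locally} by Gieseker's deformation argument --- the functor of deformations of the Gieseker curve together with its contraction $\pi:C\to C'$ has a smooth formal versal base $\mathcal{W}=\mathrm{Spec}\,\mathbb{C}[[t_{ij}]]$ (Proposition~\ref{versalproperty}), and the forgetful morphism from deformations of the pair $(C,E)$ to deformations of $C$ alone is formally smooth. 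Your $\mathrm{Ext}$-vanishing computation is morally the content of this last formal smoothness, but it must be run over the local versal $\mathcal{W}$, not over a global base that does not exist. Once $Y_{g,n}$ is regular, the flag-bundle step $\mathfrak{F}_l\to Y_{g,n}$ is smooth exactly as you say, and from there your argument and the paper's coincide.
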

	
	\begin{theorem}
		\label{mainthm3}
		Let the marked stable curve $C'$ represent an element in   $\overline{M}_{_{g, n}}$ such that $\text{Aut}(C', x^1, \cdots , x^n)$ is trivial then $\kappa_g^{-1}([C'])$ has a singularity which is a product of analytic normal crossings.
	\end{theorem}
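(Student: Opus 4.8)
The plan is to read off the analytic-local structure of $\overline{\mathfrak{U}}_{_{g,n,r}}$ at a point $p=(C,E_*)$ lying over $[C']$ and then to cut out the fibre of $\kappa_g$. Since $\overline{\mathfrak{U}}_{_{g,n,r}}$ is produced as a GIT quotient of a parameter scheme sitting over a Hilbert scheme that dominates $\overline{M}_{_{g,n}}$, Luna's \'etale slice theorem reduces the local question to the quotient of a slice at $p$ by the stabiliser of $p$. The hypothesis that $\text{Aut}(C',x^1,\dots,x^n)$ is trivial guarantees that $[C']$ is an unobstructed, hence smooth, point of $\overline{M}_{_{g,n}}$, and the only automorphisms of $p$ still to be accounted for are the homotheties of $E_*$, absorbed by passing to $\mathrm{PGL}$, and the fibrewise $\mathbb{G}_m$'s of the inserted $\mathbb{P}^1$-chains, which are already rigidified in the construction. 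Thus it suffices to exhibit the formal/analytic versal deformation space of the parabolic Gieseker pair $(C,E_*)$ relative to the contraction $\pi\colon C\to C'$, together with the morphism it induces to the complete local ring of $\overline{M}_{_{g,n}}$ at $[C']$.

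I would then split the deformations of $(C,E_*)$ into a bundle part and a curve part. Deformations of the parabolic bundle $E_*$ along curves of fixed combinatorial type contribute a \emph{smooth} factor: strict positivity of the restriction of $E$ to every rational component of $C$ is exactly the condition (already exploited in \cite{MR1687729}) forcing the relevant obstruction groups to vanish, so that the parabolic Gieseker bundle deforms unobstructedly and, moreover, independently of the node-smoothing directions. The curve part is governed by the $\delta$ nodes of $C'$: over the $j$-th node $p_j$ sits a chain $R_j$ of $\ell_j$ copies of $\mathbb{P}^1$ inside $C$, carrying $\ell_j+1$ node-smoothing parameters $t_{j,0},\dots,t_{j,\ell_j}$, while $\overline{M}_{_{g,n}}$ near $[C']$ has a coordinate $s_j$ smoothing $p_j$. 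The essential input is the standard local form of the stable-contraction morphism on deformation spaces: it sends $s_j\mapsto t_{j,0}t_{j,1}\cdots t_{j,\ell_j}$ — a first-order smoothing of $p_j$ must simultaneously smooth every node of the chain $R_j$ — while pulling the remaining $3g-3+n-\delta$ coordinates of $\overline{M}_{_{g,n}}$ back isomorphically.

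Combining these, the complete local ring of $\overline{\mathfrak{U}}_{_{g,n,r}}$ at $p$ is a power-series ring in the bundle directions, the $3g-3+n-\delta$ non-nodal curve directions, and the variables $t_{j,m}$, with $\kappa_g$ given in these coordinates by the identity on the non-nodal directions and by $s_j\mapsto \prod_{m=0}^{\ell_j}t_{j,m}$. Intersecting with $\kappa_g^{-1}([C'])$ kills the non-nodal directions and imposes $\prod_{m=0}^{\ell_j}t_{j,m}=0$ for $j=1,\dots,\delta$, so that
\[
\widehat{\mathcal{O}}_{\kappa_g^{-1}([C']),\,p}\ \cong\ \mathbb{C}[[\text{bundle directions}]]\ \widehat{\otimes}\ \bigotimes_{j=1}^{\delta}\Big(\mathbb{C}[[t_{j,0},\dots,t_{j,\ell_j}]]\big/\big(t_{j,0}\cdots t_{j,\ell_j}\big)\Big),
\]
a smooth factor times a product of $\delta$ analytic normal-crossing singularities, one for each node of $C'$; comparison with Theorem \ref{mainthm2} shows the fibre has the expected dimension $r^2(g-1)+1+\dim\big(\text{Fl}(V_l,\mathbb{C})\big)$.

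The step I expect to be the main obstacle is the joint analysis in the second paragraph: proving rigorously that the bundle deformations split off as a genuine smooth factor which does \emph{not} couple to the node-smoothing parameters — so that no mixed monomial relations appear that would destroy the normal-crossing form — and controlling the $\mathbb{G}_m^{\ell_j}$-actions on the chains inside the GIT quotient so that the quotient in those directions stays smooth with the $t_{j,m}$ as honest coordinates. A possibly cleaner alternative is to identify $\kappa_g^{-1}([C'])$ with the Nagaraj--Seshadri--type Gieseker moduli space of stable parabolic bundles on the single nodal curve $C'$ and to invoke the normal-crossing structure theorem for that space, adapting the deformation-theoretic arguments of \cite{MR1687729}, \cite{MR2106123} and \cite{schluter2011universal} to the parabolic setting.
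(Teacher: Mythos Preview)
Your proposal is correct and follows essentially the same route as the paper. The paper identifies the fibre $\kappa_g^{-1}([C'])$ with $(\mathfrak{F}_l)_s \sslash \bigl(\mathrm{Aut}(C',x^1,\dots,x^n)\times \mathrm{SL}(V_l)\bigr)$, reduces to $Y_s$ via the smooth forgetful map $(\mathfrak{F}_l)_s\to Y_s$, and then uses the versal deformation $\mathcal{W}=\mathrm{Spec}\,\mathbb{C}[[t_{ij}]]$ of the semistable curve together with the locus $\mathcal{Q}=\{\prod_j t_{ij}=0\}$ to show that the formal neighbourhood in $Y_s$ is formally smooth over $\mathcal{Q}$; freeness of the $\mathrm{SL}(V_l)$-action and triviality of $\mathrm{Aut}(C')$ finish the argument. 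This is exactly your $s_j\mapsto\prod_m t_{j,m}$ picture, and the ``decoupling'' step you flag as the main obstacle is precisely the formal smoothness assertion $R\to\mathcal{Q}$, which the paper dispatches by invoking the same deformation-theoretic argument used to prove regularity of $Y$ (your ``cleaner alternative'' via the Nagaraj--Seshadri fibre is in fact what the paper does).
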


We will briefly sketch the main steps of the construction. As a first step we will prove that the above mentioned objects of our moduli problem i.e., pairs $(C, E_*)$ where $C$ is a marked semistable curve and $E_*$ is strictly positive $p_2$-stable bundle, form a bounded family. We will rigidify the moduli problem to show that the underlying vector bundles are in an open subvariety $\text{Y}_{_{g, n}}$ of a relative Hilbert scheme $\text{Hilb}^{p(t)}(\mathcal{X} \times \text{Gr}(V_l, r))$. The rigidification will induce a natural action of $\text{SL}(N) \times \text{SL}(V_l)$ on $\text{Y}_{_{g, n}}$. We will define a functor which is represented by a generalized flag variety $\mathfrak{F}_l$ over $\text{Y}_{_{g, n}}$. The action of $\text{SL}(N) \times \text{SL}(V_l)$ will lift to an action on $\mathfrak{F}_l$. We will show that there is a natural morphism from $\mathfrak{F}_l$ to the flag variety $\text{F}_l$  of the torsion free moduli problem. We will show that this morphism is proper. The morphism is defined by the pushforward $\pi_*$. The candidate for our universal moduli space is the GIT quotient $\mathfrak{F}_l \sslash \text{SL}(N) \times \text{SL}(V_l)$. One key element of a moduli space construction using GIT is the choice of a suitable group $\text{G}$ (here $\text{G}=\text{SL}(N) \times \text{SL}(V_l)$ )-linearized ample line bundle on the appropriate $\tt{Q}uot$ scheme (here $\mathfrak{F}_l$ ). Here we use in an essential way the ample linearization of the GIT construction in  \cite{schluter2011universal}. Indeed, the role that Schl\"{u}ter's paper plays on our paper is the exact counterpart of the role that the moduli space of torsion free sheaves plays on Nagaraj-Seshadri's paper \cite{MR1687729}. \\

The layout of the paper is as follows. In the section $\text{I}$ \eqref{preliminaries} we will give the  necessary definitions and review the universal parabolic torsion free moduli construction \cite{schluter2011universal}. In section $\text{II}$ \eqref{moduli problem} we will define the notion of stability of parabolic Gieseker bundle on a marked semistable curve. We will prove that these objects of our moduli problem form a bounded family and we define the parabolic Gieseker functor \eqref{parGiefunctor} over the Gieseker functor \eqref{Giefunctor}. In section $\text{III}$ \eqref{Some technical lemmas} we will show that the parabolic Gieseker functor is represented by a flag scheme $\mathfrak{F}_l$ and we will establish a morphism $\eta:\mathfrak{F}_l \rightarrow \text{F}_l$, where $\text{F}_l$ is the  flag variety \eqref{fiberprodprop} for the torsion free moduli problem. In section $\text{IV}$ \eqref{moduli construction} we will prove  the properness of the morphism $\eta$ and give the proof of the main theorem \eqref{mainthm1}. In section  $\text{V}$ \eqref{properties of the moduli space} we will prove \eqref{mainthm2} and \eqref{mainthm3}.

\renewcommand{\abstractname}{Acknowledgements}
\begin{abstract}
	I express my sincere gratitude to Prof. V. Balaji who suggested the problem to me and held numerous discussions related to the problem. I am extremely thankful to Prof. Sukhendu Mehrotra with whom I started learning the subject algebraic geometry and geometric invariant theory. Further I thank Dr. Sourav Das for many helpful discussions on moduli theory. I thank the referee for a careful reading of the manuscript and suggesting many changes which has improved the exposition of the article.
\end{abstract}
	
	\section{preliminaries}
	\label{preliminaries}
	In this section first we will give the necessary definitions. Then we will mention key features of \cite{schluter2011universal}. This is essential in our construction since the crucial polarization for our GIT construction will come from the moduli space constructed in \cite{schluter2011universal}.\\
	
	Throughout the section a curve will mean a marked curve with $n$ distinct nonsingular points on it. The standard notation is $C$ unless the markings need specifying. The arithmetic genus of the curve is $g$. 
	
\begin{definition} 

 A {\em pure sheaf} $\mathcal{E}$ of dimension $1$ on a marked prestable curve \eqref{markedsemistablecurve} $C$ is a coherent $\mathcal{O}_{C}$ module such that every non zero subsheaf $\mathcal{F}$ of $\mathcal{E}$ has dim(supp($\mathcal{F}))=1$. 

\end{definition}

Let $\mathcal{O}_{C}(1)$ be a polarization on $C$ and $a_i=deg(\mathcal{O}_{C}(1)_{|C_i}$), where $C_i$ be the irreducible components of $C$. Without loss of generality we also assume $\sum a_i=1$. Let $r_i=rank(\mathcal{E}_{|C_i})$. We define total rank of $\mathcal{E}$, $\text{totr}$$(\mathcal{E}):=\sum(r_i \cdot a_i)$. The degree of $\mathcal{E}$ is defined as, deg$(\mathcal{E}):=\chi(\mathcal{E})-\text{totr}(\mathcal{E})(1-g)$, where the Euler characteristic $\chi(\mathcal{E})= \tt{dim} \: H^0(\mathcal{E})-dim \: H^1(\mathcal{E})$. The Hilbert polynomial with respect to the polarization $\mathcal{O}_{C}(1)$ denoted by $H$ is $H(t):=\chi(\mathcal{E}(t))=\chi(\mathcal{E})+\text{totr}(\mathcal{E}) \cdot t$ by Riemann-Roch theorem.  \\

We now fix the $n$ marked points $\{x^1, x^2, \cdots, x^n\}$ in $C$.

\begin{definition}
	\label{parabolicstructure}
	 A quasi parabolic structure(QPS) on $\mathcal{E}$ at the points $x^i$ is defined as the following filtration of sheaves
	\begin{equation}
	\label{qps}
	\mathcal{E}=F^i_1 \mathcal{E}  \supseteq F^i_2 \mathcal{E} \supset F^i_3 \mathcal{E} \supset \cdots \supseteq F^i_{l_i+1} \mathcal{E}=\mathcal{E}(-x^i)
	\end{equation}
	
	A parabolic structure is a QPS with added weights
	\begin{equation}
	0 \le \alpha^i_1 < \alpha^i_2 < \cdots <\alpha^i_{l_i} <1
	\end{equation}
\end{definition}

\begin{notation}
	We will denote the Hilbert polynomial of $\mathcal{E}/F^i_j \mathcal{E}$ by $H^i_j$ with $\{1 \le i \le n\}$ and $\{2 \le j \le (l_i+1)\}$. It is easy to see that $H^i_j$ are constant integers. {\em Alternatively we will use the notation $\bf{r^i_j}$ for them}.
\end{notation}
 The above definition \eqref{qps} is due to Maruyama-Yokogawa \cite{MR1162674}. This is known to be equivalent to the classical notion of parabolic structure in \cite{MR575939} of the following form
  \begin{equation}
  	\mathcal{E}_{|x^i}=\mathcal{E}/\mathcal{E}(-x^i) \rightarrow \mathcal{E}/F^i_{l_i} \mathcal{E} \rightarrow \cdots \rightarrow \mathcal{E}/F^i_2 \mathcal{E}
  \end{equation}
  
  \begin{notation}
   Throughout this article a parabolic sheaf will be denoted by $\mathcal{E}_*$.
  \end{notation}

	 The parabolic degree is defined as:
	 \begin{center}
	 	 $\text{par deg}$($\mathcal{E}_*$):= $\text{deg}(\mathcal{E})+\sum_{1}^{n} \sum_{1}^{l_i} \alpha^i_j \cdot \text{dim}(F^i_j \mathcal{E}/F^i_{j+1} \mathcal{E})$
	 	 =$\text{deg}(\mathcal{E}$)+$\sum_{1}^{n} \sum_{1}^{l_i} \alpha^i_j \cdot (r^i_{j+1}-r^i_j)$, $\;$  $r^i_1=0$
	 	 \end{center}
	 
	 A parabolic sheaf $\mathcal{E}_*$ is defined to be slope stable(resp. slope semistable) if for any non zero proper saturated subsheaf $\mathcal F$ of $\mathcal E$ with the induced parabolic structure on $\mathcal{F}$ we have the inequality:
	 \begin{equation}
	 \label{slopestability}
	 	 \frac{\text {par deg}(\mathcal{F}_*)}{\sum a_i \cdot s_i} <(\le) \frac{\text{par deg}(\mathcal{E}_*)}{\sum a_i \cdot r_i}
	 \end{equation} 
	 where $s_i$ are the multirank of $\mathcal{F}$.\\
	 
	 \begin{example} \em ({A sheaf with different multirank}) Let $C$ be a reducible curve with two smooth irreducible components $C_1$ and $C_2$ meeting at a point $p=C_1 \cap C_2$.
	\tikzset{every picture/.style={line width=0.75pt}} %set default line width to 0.75pt        
	
	\tikzset{every picture/.style={line width=0.75pt}} %set default line width to 0.75pt        
	
	\begin{tikzpicture}[x=0.75pt,y=0.75pt,yscale=-1,xscale=1]
	%uncomment if require: \path (0,300); %set diagram left start at 0, and has height of 300
	
	%Straight Lines [id:da8163556683651043] 
	\draw    (670.33,91) -- (740.33,187) ;
	%Straight Lines [id:da005613862837832939] 
	\draw    (666.33,189) -- (720.08,138.75) -- (775.33,88) ;
	
	% Text Node
	\draw (660,92.4) node [anchor=north west][inner sep=0.75pt]    {$C_{1}$};
	% Text Node
	\draw (650,180.4) node [anchor=north west][inner sep=0.75pt]    {$C_{2}$};
	% Text Node
	\draw (710,140.4) node [anchor=south west][inner sep=0.75pt]    {$P$};

	\end{tikzpicture}
	
	Let $\mathcal{E}_1$, $\mathcal{E}_2$ be vector bundles of rank $r_1$, $r_2$ on the curves $C_1$, $C_2$ respectively. Let $\phi:(\mathcal{E}_1)_P \rightarrow (\mathcal{E}_2)_P $. We consider the map $\psi: \mathcal{E}_1 \oplus \mathcal{E}_2 \rightarrow (\mathcal{E}_2)_P $ defined by $\psi(s_1, s_2)=\phi(s_1(P))-s_2(P)$, where $s_1(P), s_2(P)$ are the restriction of the sections $s_1, s_2$ on the fiber $(\mathcal{E}_1)_P, (\mathcal{E}_2)_P$ respectively. Let $\text{ker}(\psi)$ be $\mathcal{E}$. Then $\mathcal{E}$ is a pure sheaf on the curve $C$. If we choose $r_1 \ne r_2$ then the multiranks of $\mathcal{E}$ are different.  
	 		 \end{example}
	 
\begin{definition}
	Following D. Schl\"{u}ter \cite[definition 4.3.11]{schluter2011universal} we call a parabolic sheaf $\mathcal{E}_*$ to be $p_2$-stable(resp. $p_2$- semistable) if  for any non zero proper saturated subsheaf $\mathcal F$ of $\mathcal E$ with the induced parabolic structure on $\mathcal{F}$, multirank $s_i$, and Hilbert polynomial  $H(\mathcal{F}/F^i_j \mathcal{F})= s^i_j$, we have the inequality:
	\footnotesize	\begin{equation}
	\label{p_2stability}
	H(\mathcal{E}, y)	\cdot \left(H(\mathcal{F}, x)-\frac{1}{n}\left(\sum_{1}^{n}\sum_{2}^{l_{i+1}}\epsilon^i_j \cdot s^i_j\right)\right)  <(\le )	H(\mathcal{F}, y)	\cdot \left(H(\mathcal{E}, x)-\frac{1}{n}\left(\sum_{1}^{n}\sum_{2}^{l_{i+1}}\epsilon^i_j \cdot r^i_j\right)\right)
	\end{equation} \normalsize
	where $\epsilon^i_j:=\alpha^i_j-\alpha^i_{j-1}$, $2 \le j \le (l_i+1)$ and $\alpha^i_{l_i+1}=1$. The above inequality of polynomials in two variables $x, y$ is according to the lexicographic ordering in $\mathbb{Q}[x, y]$ \eqref{lexicographic}.
\end{definition}	  
	
\begin{remark}
	The notion of $p_2$- stability that has been used to construct the universal moduli space in \cite{schluter2011universal} is a modification of the notion of slope stability \eqref{slopestability}. In the definition \cite[definition 4.3.11]{schluter2011universal} the Hilbert polynomials $H^i_j(\mathcal{E}, x)$ and $H^i_j(\mathcal{E}, y)$ will become constants for curves. The constant is $\text{dim}(\mathcal{E}/F^i_j \mathcal{E})=r^i_j$. Also for a subsheaf $\mathcal{F}$ of $\mathcal{E}$, the Hilbert polynomial $p(\mathcal{F}/F^i_j (\mathcal{F}), y)$ is the constant $s^i_j$. Therefore putting all these quantities together the definition \cite[definition 4.3.11]{schluter2011universal} will take the form of \eqref{p_2stability}.
\end{remark}

	\begin{remark}
		In the above definition of $p_2$-stability, the product of polynomials that appears on both side of \eqref{p_2stability} are of the form $(a_1 y+b_1)(a_2x+b_2)$. In the next paragraph we will define lexicographic ordering for this set of polynomials.  
	\end{remark}
	
\begin{definition}
\label{lexicographic}
	Given two polymials $f=a_1xy+b_1y+c_1x+d_1$ and $g=a_2xy+b_2y+c_2x+d_2$ in $\mathbb{Q}[x,y]$, we will define $f \le g $ if $a_1 <a_2$ or $a_1 = a_2$ and $b_1 <b_2$ or $a_1 = a_2 , b_1 =b_2$ and $c_1 < c_2$ or $a_1 = a_2 , b_1 =b_2 , c_1 = c_2$ and $d_1 \le d_2$.
\end{definition}

For few more details see Appendix \ref{appendixb}.

\subsection{Universal moduli of parabolic sheaves} We will briefly give an outline of the universal moduli of semi stable parabolic pure sheaves as in \cite{schluter2011universal}.\\

Objects of the moduli space are $p_2$-stable(semistable) \eqref{p_2stability} pure sheaves $\mathcal{E}$ of uniform rank $r$ i.e., $r_i=r \; \forall i$ on marked stable curves $C$ \eqref{stablecurve}. We will fix once and for all rank $r$ , deg $d$, quasi parabolic structures $r^i_j$ and weights $\alpha^i_j \in \mathbb{Q}$. This data is called numerical type of this moduli problem. \\

By (semi)stable locus, (semi)stable points of a variety $X$ under the action of a reductive group $G$ we mean it in the sense of GIT.

\begin{notation}
	\label{S_{g, n} notation}
	Let $\mu:\mathcal{X} \rightarrow {\tt S_{_{g,n}}}$ \eqref{localunifamily} be the local universal family of marked stable curves with natural sections $\sigma_i:{\tt S_{_{g,n}}} \rightarrow \mathcal{X}, \; 1 \le i \le n$ corresponding to the marked points. Let $D^i$ be their associated divisors. The base ${\tt S_{_{g,n}}}$ is the Quot scheme for the moduli problem of $\overline{M}_{_{g, n}}$\eqref{localunifamily}.  Let $\mathcal{O}_{\mathcal{X}}(1)$ be the relative ample line bundle $\left(\omega_{\mathcal{X}/{\tt S_{_{g,n}}}}(\sum \sigma_i)\right)^{\otimes \rho}$.
\end{notation}
\begin{remark}
In several places for notational reasons we will denote $\bf{{\tt S_{_{g,n}}}}$ by $\tt{S}$.
\end{remark}
 By the boundedness of the family of $p_2$-stable sheaves on marked stable curves of fixed numerical type \eqref{boundedness} there exist $l_0$ such that $\forall \;l \ge l_0$ and $\forall \;s \in {\tt S_{_{g,n}}}$ we have:\\

(1) $\text{H}^1(\mathcal{X}_s, \mathcal{E}(l))=0$ and (2) $\text{H}^0(\mathcal{X}_s, \mathcal{E}(l)) \otimes \mathcal{O}_{\mathcal{X}_s} \rightarrow \mathcal{E}(l)$ is  surjective.\\
where $\mathcal{E}$ is a $p_2$-stable sheaf on $\mathcal{X}_s$.\\
 
 Since degree of $\mathcal{E}(l)$ is independent of $s \in {\tt S_{_{g,n}}}$,  from the cohomology vanishing $\text{H}^1(\mathcal{X}_s, \mathcal{E}(l))=0$ and the Riemann-Roch theorem, it follows $\text{dim}\ \text{H}^0(\mathcal{X}_s, \mathcal{E}(l))$ is independent of $s \in {\tt S_{_{g,n}}}$. We will rigidify the moduli problem by fixing an isomorphism $\text{V}_l \cong \text{H}^0(\mathcal{E}(l))$. 
  We consider the relative Quot scheme over ${\tt S_{_{g,n}}}$:
 \begin{equation}
 \label{relquotscheme}
 	\text{Quot}^{\mathcal{O}_{\mathcal{X}}(1)}_{\mathcal{X}/{\tt S_{_{g,n}}}}(\text{V}_l \otimes \mathcal{O}_{\mathcal{X}}, \text{H}) 
 \end{equation}
  where $\tt{H}$ is the Hilbert polynomial $\text{H}(\mathcal{E}(l), t)=\chi(\mathcal{E}(l))+r \cdot t=d+r \cdot l+r \cdot (1-g)+r \cdot t$. We will denote the Quot scheme \eqref{relquotscheme} by $\text{Q}_g(\mu, V_l, H)$ where $\mu:\mathcal{X} \rightarrow \tt S_{_{g,n}}$ is the structure map. The points of the Quot scheme $\text{Q}_g(\mu, V_l, H)$ are coherent $\mathcal{O}_{\mathcal{X}}$ module $\mathcal{F}$ which are flat over $S_{_{g,n}}$ with a surjection $V_l \otimes \mathcal{O}_{\mathcal{X}} \rightarrow \mathcal{F} \rightarrow 0$ and for $s \in S_{_{g,n}}$ the fiber $\mathcal{F}_s$ over $\mathcal{X}_s$ has Hilbert polynomial $H$ with respect to the ample line bundle $\mathcal{O}_{\mathcal{X}_s}(1)$.
  \begin{remark}
  	Note that from the above $(2)$, the $p_2$-stable sheaves $\mathcal{E}(l)$ are points of $\text{Q}_g(\mu, V_l, H)$.
  \end{remark}
 The rigidification will induce an action of $\text{SL}(V_l)$ on $\text{Q}_g(\mu, V_l, \text{H})$. There is a natural group action of $\text{SL}(N) \times \text{SL}(V_l) $ on $\text{Q}_g(\mu, V_l, H)$ where $\tt{SL}(N)$ action comes from it's action on $\mathcal{X}$ \eqref{modulistablecurves}. The universal quotient $\mathcal{U}$ fits in the universal short exact sequence
 \begin{equation}
 \label{unitorsionses}
 	0 \rightarrow \mathcal{K} \rightarrow \text{V}_l \otimes \mathcal{O}_{\mathcal{X} \times_{\text{S}} \text{Q}_g(\mu, V_l, H)} \rightarrow \mathcal{U} \rightarrow 0
 \end{equation}
 where the kernel $\mathcal{K}$ is called the universal subsheaf.\\
 
 Let $\text{F}_l$ be the flag variety over $\text{Q}_g(\mu, V_l, H)$ which parameterizes parabolic sheaves  $\mathcal{E}_*$ of fixed numerical type $(\tt{H}, r^i_j, \alpha^i_j)$ such that $\mathcal{E} \in \tt{Q}_g(\mu, V_l, H)$. More precisely for $s \in S_{_{g,n}}$ a closed point of the fiber $(\text{F}_l)_s$ is given by a quotient $V_l \otimes \mathcal{O}_{\mathcal{X}_s} \rightarrow \mathcal{E}$ in the fiber $(\text{Q}_g(\mu, V_l, H))_s$ along with a sequence of quotients $\mathcal{E} \rightarrow Q^i_{l_i+1} \rightarrow Q^i_{l_i} \rightarrow \cdots \rightarrow Q^i_2$, where $Q^i_j$ are skyscraper sheaves supported at the parabolic point $x^i$ for $i=1, 2, \cdots , n$ and $\text{dim}(Q^i_j)=r^i_j$ for $j=2, 3, \cdots , l_{i+1}$.  This has been constructed  inductively by constructing successive Quot schemes using the universal quotient sheaf $\mathcal{U}$ in \cite[4.6]{schluter2011universal}. The flag variety $\text{F}_l$ represents the following functor:
 \begin{equation}
 \label{flagfunctor}
 \underline {\text{F}_l}:\text{Sch}/{\tt S_{_{g,n}}} \rightarrow \text{Sets}
 \end{equation} 
 for a ${\tt S_{_{g,n}}}$ scheme $T$, we define $\underline{\text{F}_l}(T)$ to be the set

 \begin{center}
 	\{parabolic filtrations of $T$-flat sheaves $\mathcal{E}_{T}$ of the form \eqref{relparfiltration} which has a quotient representation $\text{V}_l \otimes \mathcal{O}_{\mathcal{X}_T} \rightarrow \mathcal{E}_{T}$ such that for all $t \in T$, $\mathcal{E}_t$ has Hilbert polynomial $\tt{H}$ and $(\mathcal{E}/F^i_j \mathcal{E})_t$ has dimension $\tt{r}^i_j$ \}
 \end{center}

 The natural action of $\text{SL}(N) \times \text{SL}(V_l)$ on $\text{Q}_g(\mu, V_l, H)$ will lift to an action on $\text{F}_l$. The GIT quotient ${\displaystyle \text{F}_l \sslash \text{SL}(V_l) \times \text{SL(N)}}$ is the candidiate for the universal moduli space. The moduli functor associates to every scheme $T$ the set 
 
 \begin{center}
 	\{equivalance classes of flat family of $p_2$-stable\footnote{The moduli space in \cite{schluter2011universal} has been constructed for $p_2$-semistable sheaves. A notion of $S$-equivalance has been defined for $p_2$ semistable sheaves.} parabolic sheaves $\mathcal{E}_*$ \eqref{flatfamilyofparabolicsheaves} on the family of marked stable curves $\mu_{T}:C_T \rightarrow T$ such that $\mathcal{E}_s$ has uniform rank $r$, degree $d$ and $(\mathcal{E}/F^i_j \mathcal{E})_s$ has dimension  $r^i_j$  \}
 \end{center}

 Two flat families of $p_2$-stable sheaves $\mathcal{E}_*$ and $\mathcal{E'}_*$ on $C_T/T$ and $C'_T/T$ are equivalent if there exist a $T$ isomorphism $\psi:C_T \rightarrow C'_T$ and a line bundle $\mathcal{L}$ on $T$ such that we have a parabolic isomorphism $ \mathcal{E}_* \cong \psi^* \mathcal{E'}_* \otimes \mu_{T}^* \mathcal{L}$. Let 
 \begin{equation}
 \label{uniformrank}
 	\text{Q}^r_g(\mu, V_l, H) \hookrightarrow \text{Q}_g(\mu, V_l, H)
 \end{equation}
  be the closed subscheme of uniform rank $r$ \cite[Lemma 8.1.1]{MR1308406}. The uniform rank Quot scheme $\text{Q}^r_g(\mu, V_l, H)$ is invariant under the action of $ \text{SL(N)}\times \text{SL}(V_l)$. By an abuse of notation $\text{F}_l$ continues to denote the base change of $\text{F}_l$ under the morphism \eqref{uniformrank}. \\
  
  The flag variety $\text{F}_l$ can be embedded as a closed subscheme inside the product of relative Grassmannians over ${\tt S_{_{g,n}}}$  \cite[4.41]{schluter2011universal} 
  \begin{equation}
  \label{rel embedding of flag}
  \text{F}_l \hookrightarrow \text{Gr} \times_S (\text{Gr}^1_{l_1+1} \times_S \cdots \times_S \text{Gr}^1_2) \times_S \cdots (\text{Gr}^n_{l_n+1} \times_S \cdots \times_S \text{Gr}^n_2)
  \end{equation}
  where $\text{Gr}:=\text{Gr}_S(V_l \otimes \mu_* \mathcal{O}_{\mathcal{X}}(k), H(k))$ and $\text{Gr}^i_j:=\text{Gr}_S(V_l \otimes \mu_* \mathcal{O}_{\mathcal{X}}(k), r^i_j(k))$. Here the pushforward of the relative ample line bundle $\mathcal{O}_{\mathcal{X}}(k)$ has been defined with respect to the structure map $\mu:\mathcal{X} \rightarrow \tt{S}_{_{g, n}}$ \eqref{S_{g, n} notation}. The embedding is $\text{SL}(V_l)$ equivariant.\\
  
  We will get the following $\mathbb{Q}$-divisor on the product of Grassmannians on RHS of \eqref{rel embedding of flag}
  \begin{equation}
  	\label{linearizationforSL(V_l)}
  	\tt{L}_{\beta, \beta^i_j}:=\mathcal{O}_{Gr}(\beta) \otimes_{i=1}^{n} \otimes_{j=2}^{l_i+1}\mathcal{O}_{Gr^i_j}(\beta^i_j)
  \end{equation}
 where $\beta, \: \beta^i_j $ are rational numbers. For $a \gg 0$,  ${\tt{L}_{\beta, \beta^i_j}^{\otimes a}}_{\big | \text{F}_l}$ is a relative very ample linearization for the $\text{SL}(V_l)$  action on $\text{F}_l$.
 \begin{remark}
  	It is enough to study the GIT problem of $\text{SL}(V_l)$ action on a fiber $(\text{F}_l)_s$ for $s \in S_{_{g, n}}$ by using \cite[Lemma 1.13]{MR1307297}.
  \end{remark}

\begin{notation}
	A point in the image of the map in \eqref{rel embedding of flag} over a point $s \in S_{_{g, n}}$ will correspond to a parabolic sheaf $\mathcal{E}_*$ together with a morphism $\gamma: \text{V}_l \rightarrow \text{H}^0(\mathcal{E}(l))$ which is induced by the quotient map $\text{V}_l \otimes \mathcal{O}_{\mathcal{X}_s} \rightarrow \mathcal{E}(l)$. 
\end{notation}
For a suitable choice of linearization weights $\beta, \beta^i_j$ \cite[4.49]{schluter2011universal} we will get the following lemma
  \begin{lemma}
  	\label{GITforSL(V_l)}
  	There exist $l_0 \in \mathbb{N}$ such that  for all $l \ge l_0$ there exist $K(l)$ with the property that for all $ k \ge K(l)$, a point $(\mathcal{E}_*, \gamma)$ in the flag variety $(\text{F}_l)_s$ is $\text{SL}(V_l)$ stable with respect to the linearization $ {\tt{L}_{\beta, \beta^i_j}^{\otimes a}}_{\big | \text{F}_l}$ if and only if $ \mathcal{E}_*$ is a $p_2$-stable sheaf of uniform rank $r$ and $\gamma:\text{V}_l \rightarrow \text{H}^0(\mathcal{E}(l))$ is an isomorphism. 
  \end{lemma}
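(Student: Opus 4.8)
Here is how I would approach the proof of Lemma~\ref{GITforSL(V_l)}.

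The plan is to reduce the statement, via the remark preceding it, to a Hilbert--Mumford computation on a single fibre $(\text{F}_l)_s$: fix one marked stable curve $C = \mathcal{X}_s$, embed $(\text{F}_l)_s$ into the product of Grassmannians of \eqref{rel embedding of flag} over $\{s\}$, polarize by \eqref{linearizationforSL(V_l)}, and analyse $\text{SL}(V_l)$-stability there. A nontrivial one-parameter subgroup $\lambda$ of $\text{SL}(V_l)$ is the datum of a weighted filtration $0 = W_0 \subsetneq W_1 \subsetneq \cdots \subsetneq W_p = V_l$, and on each Grassmannian factor its Hilbert--Mumford weight against a point $(\mathcal{E}_*,\gamma)$ is an explicit nonnegative combination of the integers $\dim\,\mathrm{im}\bigl(W_a\otimes \mu_*\mathcal{O}_{\mathcal{X}}(k)_s \to H^0(\mathcal{E}(l+k))\bigr)$ coming from the big Grassmannian $\text{Gr}$, together with the contributions from the flag Grassmannians $\text{Gr}^i_j$, which reduce to $\dim\,\mathrm{im}\bigl(W_a\otimes\mu_*\mathcal{O}_{\mathcal{X}}(k)_s\to \mathcal{E}/F^i_j\mathcal{E}\bigr)$ because $\mathcal{E}/F^i_j\mathcal{E}$ is a skyscraper of length $r^i_j$ at $x^i$.

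The technical core is the standard translation of these dimensions into Hilbert polynomials of subsheaves. For $W\subseteq V_l$ write $\mathcal{F}_W\subseteq\mathcal{E}$ for the subsheaf generated by $\gamma(W)$ and $\overline{\mathcal{F}_W}$ for its saturation. Using the boundedness result \eqref{boundedness} (applied uniformly over $\tt S$), I would first fix $l_0$ so that for $l\ge l_0$ the saturations $\overline{\mathcal{F}_W}$ form a bounded family with $H^1(\overline{\mathcal{F}_W}(l))=0$ and global generation after twisting by $l$; then, for each such $l$, I would choose $K(l)$ so that for all $k\ge K(l)$ the image of $W\otimes\mu_*\mathcal{O}_{\mathcal{X}}(k)_s$ in $H^0(\mathcal{E}(l+k))$ is exactly $H^0(\overline{\mathcal{F}_W}(l+k))$, of dimension $H(\overline{\mathcal{F}_W},l+k)$. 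It is enough to test the Hilbert--Mumford criterion on filtrations by saturated subsheaves, since passing from $\mathcal{F}_W$ to its saturation only increases each of these dimensions while leaving $s^i_j = H(\overline{\mathcal{F}_W}/F^i_j\overline{\mathcal{F}_W})$ unchanged. Substituting these identities into the weight formula and choosing the linearization weights $\beta,\beta^i_j$ exactly as in \cite[4.49]{schluter2011universal}, so that the skyscraper contributions at the $n$ points $x^i$ acquire coefficients $\tfrac{1}{n}\epsilon^i_j$, the condition ``the weight of $\lambda$ against $(\mathcal{E}_*,\gamma)$ is $>0$ (resp. $\ge 0$) for every nontrivial $\lambda$'' turns, for every $k\ge K(l)$ and every saturated $\mathcal{F}$, into the $p_2$-stability inequality \eqref{p_2stability} evaluated at the corresponding values of its two auxiliary variables (the one tracking the twist $k$ and the one tracking the averaging of the parabolic weights). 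Because both sides are polynomials of degree one in $k$, having the inequality for all large $k$ is equivalent to the lexicographic comparison \eqref{p_2stability} in $\mathbb{Q}[x,y]$ as in \eqref{lexicographic}; read in either direction this gives the asserted equivalence between stability of $(\mathcal{E}_*,\gamma)$ and $p_2$-stability of $\mathcal{E}_*$.

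Finally I would pin down that $\gamma$ is an isomorphism. If $\gamma$ is not injective, the one-parameter subgroup adapted to $0\subsetneq\ker\gamma\subsetneq V_l$ destabilizes $(\mathcal{E}_*,\gamma)$, since $\ker\gamma$ contributes nothing to any Grassmannian image; hence semistability forces $\gamma$ injective, and then the equality $\dim V_l = h^0(\mathcal{E}(l))$ for a $p_2$-stable $\mathcal{E}$ of uniform rank $r$ (the cohomology vanishing fixed at the start of this subsection, together with Riemann--Roch and the closed condition cutting out $\text{Q}^r_g(\mu, V_l, H)$ in \eqref{uniformrank}) forces $\gamma$ to be an isomorphism onto $H^0(\mathcal{E}(l))$; conversely, when $\mathcal{E}_*$ is $p_2$-stable of uniform rank $r$, that same vanishing and global generation make $\gamma$ automatically an isomorphism. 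I expect the main obstacle to be twofold: the quantifier bookkeeping — choosing $l_0$ and then $K(l)$ so that every estimate is uniform over the bounded family on all of $\tt S$ — and the precise matching of $\beta,\beta^i_j$ that converts the $n$ skyscraper terms into the coefficients of \eqref{p_2stability}. For both I would follow Schl\"uter's computation in \cite[4.49]{schluter2011universal} line by line, since the flag variety $\text{F}_l$ and the polarization \eqref{linearizationforSL(V_l)} used here are literally his.
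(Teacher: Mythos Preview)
Your proposal is correct and is precisely the standard Hilbert--Mumford computation that underlies the cited result; note, however, that the paper itself does not carry out any of this---its entire proof is the single line ``For a proof see \cite[Theorem 4.8.1]{schluter2011universal}'', since the flag variety $\text{F}_l$, the embedding \eqref{rel embedding of flag}, and the linearization \eqref{linearizationforSL(V_l)} are taken verbatim from Schl\"uter. Your sketch is thus a faithful outline of what Schl\"uter does in the cited theorem rather than an alternative route; the one small inaccuracy is the claim that passing to the saturation $\overline{\mathcal{F}_W}$ leaves the skyscraper contributions $s^i_j$ unchanged (they can increase if $\mathcal{E}/\mathcal{F}_W$ has torsion at a parabolic point), but this is harmless since the saturation only makes the destabilizing inequality worse, which is what the argument needs.
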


 \begin{proof}
 	For a proof see \cite[Theorem 4.8.1]{schluter2011universal}.
 \end{proof}

\begin{remark}
	Notice that the linearization $ {\tt{L}_{\beta, \beta^i_j}^{\otimes a}}_{\big | \text{F}_l}$ depends on the integer $k$ since it is induced from the embedding \eqref{rel embedding of flag} which depends on $k$.
\end{remark}

 We will denote the product on the R.H.S \eqref{rel embedding of flag} by ${\bf{{Gr}_{S}(l, k)}}$. Let $\text{ \bf{{Gr}(l, k)}}$ be the fiber of $\tt{\bf{{Gr}_S(l, k)}}$ at a closed point $s$ in ${\tt S_{_{g,n}}}$. Then
 \begin{center}
  $\tt{ \bf{{Gr}(l, k)}}=Gr(V_l \otimes \text{H}^0(\mathcal{O}_{\mathcal{X}_s}(k), H(k)) \times_{\mathbb{C}} \prod_{i=1}^{n} \prod_{j=2}^{l_i+1} Gr(V_l \otimes \text{H}^0( \mathcal{O}_{\mathcal{X}_s}(k)), r^i_j(k))$.
 \end{center} 

 The flag variety $\text{F}_l$ can be embedded as a locally closed subscheme \cite[p. 139]{schluter2011universal}
 \begin{equation}
 \label{totalspaceF_lembedding}
 \text{F}_l \hookrightarrow {\tt S_{_{g,n}}} \times_{\mathbb{C}} \tt{ \bf{{Gr}(l, k)}} \hookrightarrow \overline{{\tt S_{_{g,n}}}} \times_{\mathbb{C}} \tt{ \bf{{Gr}(l, k)}}
 \end{equation}
 
 There exist a natural action of $\tt{SL}(N)$ on $\overline{{\tt S_{_{g,n}}}}$ and a linearization $\tt{L}_{m, m'}$ for the $\tt{SL}(N)$ action on $\overline{{\tt S_{_{g,n}}}}$ with nice properties \cite[ p. 47]{schluter2011universal}, \cite[pp. 23-24]{MR2431236}, \eqref{GITtheoremmodcurves}. We will write $\mathcal{M}=\tt{L}_{m, m'}$. We have mentioned the linearization $\tt{L}_{\beta, \beta^i_j}$ for the $\text{SL}(V_l)$ action on $\tt{ \bf{{Gr}(l, k)}}$. The embedding \eqref{totalspaceF_lembedding} is an $\text{SL}(N) \times \text{SL}(V_l)$- equivariant embedding. The following $\mathbb{Q}$-divisor 
  \begin{equation}
 \label{linearizationproductaction}
 	\tt{L}_{\beta, \beta^i_j} \boxtimes \mathcal{M}^{\otimes b}
 \end{equation}
 on $\overline{{\tt S_{_{g,n}}}} \times_{\mathbb{C}} \tt{ \bf{{Gr}(l, k)}}$ gives a linearization for the action of $\text{SL}(N) \times \text{SL}(V_l)$ on $\tt{ \bf{{Gr}(l, k)}}$ and therefore on $\text{F}_l$ by pullback. We will denote this linearization by $\overline{L}$.\\
 
 The semistable locus of $\text{F}_l$ is closed in the semistable locus of  $\overline{{\tt S_{_{g,n}}}} \times_{\mathbb{C}} \tt{ \bf{{Gr}(l, k)}}$ for the $\text{SL}(N) \times \text{SL}(V_l)$ action with respect the linearization $\overline{L}$ for $b \gg 0$ \cite[8.2, 30]{MR1308406}, \cite[p. 140]{schluter2011universal}. Therefore the GIT quotient ${\displaystyle \text{F}_l \sslash \text{SL}(N) \times \text{SL}(V_l)}$ exists as a projective variety which is denoted by $\overline{\mathcal{U}}_{_{g, n, r}}$. The following proposition together with \eqref{GITforSL(V_l)} gives the moduli theoretic interpretation of $\text{SL}(N) \times \text{SL}(V_l)$ stable points of $\text{F}_l$ with respect to the linearization $\overline{L}$ for $b \gg 0$.
 \begin{proposition}
 	\label{tfmoduliinterpretation}
 	A point of $\text{F}_l$ is stable for the $\text{SL}(V_l)$ action with respect to the linearization $ {\text{L}_{\beta, \beta^i_j}^{\otimes a}}_{\big | \text{F}_l}$ if and only if the point is GIT stable for the $\text{SL}(N) \times \text{SL}(V_l)$ action under the linearization $\overline{L}|_{\text{F}_l}$.
 	 \end{proposition}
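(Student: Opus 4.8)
The plan is to check the Hilbert--Mumford numerical criterion on $\text{F}_l$ directly, exploiting the product shape of $\overline{L}$ together with a feature built into the GIT presentation of $\overline{M}_{_{g,n}}$: with respect to the linearization $\mathcal{M}$, the $\text{SL}(N)$-action on the projective scheme $\overline{{\tt S_{_{g,n}}}}$ has semistable $=$ properly stable, every closed point being stable --- one of the ``nice properties'' of $\mathcal{M}$ recorded in \eqref{GITtheoremmodcurves}, \cite[pp.~23--24]{MR2431236}. Abbreviate $L:=\text{L}_{\beta, \beta^i_j}$. Since replacing a linearization by a positive tensor power rescales every Mumford weight by the same positive constant and alters neither the stable nor the semistable locus, I may drop the exponent $a$ and argue with $L|_{\text{F}_l}$; normalize the Hilbert--Mumford index $\mu$ so that a point is stable (resp. semistable) exactly when $\mu(x,\lambda)>0$ (resp. $\ge 0$) for every nontrivial one-parameter subgroup $\lambda$. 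First I would record the weight decomposition. Because the embedding \eqref{totalspaceF_lembedding} is $\text{SL}(N)\times\text{SL}(V_l)$-equivariant, $\overline{L}$ is the restriction of $L\boxtimes\mathcal{M}^{\otimes b}$, and $\mathcal{M}$ is pulled back from $\overline{{\tt S_{_{g,n}}}}$, on which $\text{SL}(V_l)$ acts trivially, additivity of Mumford weights under tensor products gives, for $x\in\text{F}_l$ with image $(s,w)\in\overline{{\tt S_{_{g,n}}}}\times\tt{\bf{Gr}(l, k)}$ and any one-parameter subgroup $\lambda=(\lambda_1,\lambda_2)$ of $\text{SL}(N)\times\text{SL}(V_l)$,
\[
\mu^{\overline{L}}(x,\lambda)=\mu^{L}(x,\lambda)+b\,\mu^{\mathcal{M}}(s,\lambda_1),
\]
the final term depending on $\lambda_1$ only. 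Taking $\lambda_1$ trivial, $\mu^{\overline{L}}(x,(1,\lambda_2))=\mu^{L}(x,\lambda_2)$, which is precisely the weight controlling $\text{SL}(V_l)$-stability of $x$ on $\text{F}_l$ in \eqref{GITforSL(V_l)}; hence a point that is GIT (semi)stable for $\text{SL}(N)\times\text{SL}(V_l)$ is automatically $\text{SL}(V_l)$-(semi)stable, and this implication requires neither $b\gg 0$ nor any property of $\overline{{\tt S_{_{g,n}}}}$.

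For the converse, let $x\in\text{F}_l$ be $\text{SL}(V_l)$-stable, with image $s\in\overline{{\tt S_{_{g,n}}}}$, and fix a Weyl-invariant norm $\|\cdot\|$ on one-parameter subgroups. Two uniform bounds are needed. First, restricted to the cocharacter lattice of a maximal torus the function $\lambda\mapsto\mu^{L}(x,\lambda)$ is the support function of the finite set of weights occurring in the projective embedding of $\tt{\bf{Gr}(l, k)}$ afforded by a suitable power of $L$, hence piecewise linear with slopes bounded by a constant $C>0$ independent of $x$; as $(\lambda_1,\lambda_2)$ and $(1,\lambda_2)$ lie in a common maximal torus, this yields $\mu^{L}(x,(\lambda_1,\lambda_2))\ge\mu^{L}(x,(1,\lambda_2))-C\|\lambda_1\|$. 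Second, since every closed point of the compact scheme $\overline{{\tt S_{_{g,n}}}}$ is properly stable for $\mathcal{M}$, the lower semicontinuous function $s'\mapsto\inf_{\lambda_1\neq 0}\mu^{\mathcal{M}}(s',\lambda_1)/\|\lambda_1\|$ is everywhere positive on a compact set, hence bounded below by some $\epsilon>0$, so $\mu^{\mathcal{M}}(s,\lambda_1)\ge\epsilon\|\lambda_1\|$ for all $s$ and all $\lambda_1$. Now take $b$ large enough that $b\epsilon>C$, which is part of the standing assumption $b\gg 0$. For a nontrivial $\lambda=(\lambda_1,\lambda_2)$: if $\lambda_1$ is trivial, then $\mu^{\overline{L}}(x,\lambda)=\mu^{L}(x,\lambda_2)>0$ by $\text{SL}(V_l)$-stability; if $\lambda_1$ is nontrivial, then combining the decomposition with the two bounds and $\mu^{L}(x,(1,\lambda_2))\ge 0$ (which follows from $\text{SL}(V_l)$-semistability alone),
\[
\mu^{\overline{L}}(x,\lambda)\ \ge\ -C\|\lambda_1\|+b\,\epsilon\,\|\lambda_1\|\ =\ (b\epsilon-C)\,\|\lambda_1\|\ >\ 0.
\]
Thus $x$ is GIT stable for $\text{SL}(N)\times\text{SL}(V_l)$; finiteness of its stabilizer transfers too, since the projection of $\text{Stab}(x)$ to $\text{SL}(N)$ lands in the finite group $\text{Aut}(s)$ while the kernel of that projection sits inside the finite group $\text{Stab}_{\text{SL}(V_l)}(x)$. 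Running the same computation with non-strict inequalities shows $\text{SL}(V_l)$-semistable $\Rightarrow\text{SL}(N)\times\text{SL}(V_l)$-semistable, so the semistable loci coincide as well.

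The hard part is the second uniform bound: it is the only step beyond formal GIT, and it relies on the GIT presentation of $\overline{M}_{_{g,n}}$ having semistable $=$ properly stable, which makes the numerical instability measure of the $\text{SL}(N)$-action lower semicontinuous and nowhere zero on a compact scheme, hence uniformly bounded away from $0$. Without this one cannot control the ``mixed'' one-parameter subgroups --- those with $\lambda_1$ and $\lambda_2$ both nontrivial and $\|\lambda_2\|$ large relative to $\|\lambda_1\|$ --- and only the easy implication of the first paragraph survives. The remaining ingredients, namely additivity and the Lipschitz estimate for Mumford weights together with the choice $b\gg 0$, are routine and parallel the torsion-free construction; cf.\ \cite[\S8]{MR1308406} and \cite[p.~140]{schluter2011universal}.
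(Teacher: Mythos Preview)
Your overall strategy---decompose the Hilbert--Mumford weight along the box product $\overline{L}=L\boxtimes\mathcal{M}^{\otimes b}$ and absorb the mixed $\lambda_1$-contribution into the large $b\,\mu^{\mathcal{M}}$ term---is the right one, and it is essentially the mechanism behind the general GIT lemma \eqref{GITlemma} (cf.\ \cite[Proposition 5.1]{MR309940}, \cite[Proposition 5.1.1]{schluter2011universal}) which, together with its refinement \cite[Proposition 5.1.2]{schluter2011universal}, is what the paper simply cites. So in spirit you are reproving the cited reference rather than doing something genuinely different.

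There is, however, a real error in your argument for the uniform lower bound $\epsilon$. You assert that ``every closed point'' of the projective scheme $\overline{{\tt S_{_{g,n}}}}$ is properly stable for $\mathcal{M}$, and then use compactness plus lower semicontinuity. That claim is false: Theorem \eqref{GITtheoremmodcurves} says precisely that $\overline{{\tt S_{_{g,n}}}}^{ss}(\mathcal{M})=\overline{{\tt S_{_{g,n}}}}^{s}(\mathcal{M})={\tt S_{_{g,n}}}$, so the boundary $\overline{{\tt S_{_{g,n}}}}\setminus{\tt S_{_{g,n}}}$ consists entirely of \emph{unstable} points, where your function $s'\mapsto\inf_{\lambda_1\neq 0}\mu^{\mathcal{M}}(s',\lambda_1)/\|\lambda_1\|$ is negative. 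Your compactness argument therefore collapses: the set on which you need the positive lower bound is the \emph{open, noncompact} locus ${\tt S_{_{g,n}}}$.

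The fix is not compactness but finiteness. For a fixed maximal torus the weights occurring in the linearized projective embedding form a finite set, so the normalized Mumford index $\lambda_1\mapsto\mu^{\mathcal{M}}(s,\lambda_1)/\|\lambda_1\|$ takes only finitely many values as $s$ ranges over $\overline{{\tt S_{_{g,n}}}}$ and $\lambda_1$ over nontrivial one-parameter subgroups; hence its restriction to the stable locus ${\tt S_{_{g,n}}}$, where it is strictly positive, has a positive minimum $\epsilon$. This is the standard input in the proofs you cite (\cite[\S8]{MR1308406}, \cite[Proposition 5.1.1]{schluter2011universal}), and once you replace your compactness paragraph with this finiteness argument the rest of your computation goes through. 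Note also that for the proposition as stated you only ever use $s\in{\tt S_{_{g,n}}}$, since $\text{F}_l$ lands in ${\tt S_{_{g,n}}}\times\tt{\bf{Gr}(l,k)}$ under \eqref{totalspaceF_lembedding}.
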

  \begin{proof}
  	For a proof we refer to \cite[Proposition 5.1.2]{schluter2011universal}.
  \end{proof}

	\section{moduli problem}
	\label{moduli problem}
	The aim of this section is to state the moduli problem and then define a representable functor which is represented by a flag variety that parameterizes all parabolic structures of fixed numerical type on semi-stable curves. This functor is built together with a morphism to the Gieseker functor \eqref{Giefunctor}.

	\subsection{Parabolic Gieseker bundles} \label{basicdefGieseker}
	\begin{definition}
		\label{markedGiecurve}
		Let $C$ be a marked semistable curve of genus $g \ge 2$ \eqref{markedsemistablecurve} such that $\pi:C \rightarrow C'$ is the collapsing morphism and $C'$ is the fixed marked stable model. Then $C$ is called a {\em marked Gieseker curve} if $\pi^* \omega_{C'} \cong \omega_C$, where $\omega_{C'}$ and $\omega_C$ are the dualizing sheaves of $C'$ and $C$ respectively.
	\end{definition}
	Let $z^1, z^2, \cdots , z^c$ are the nodes of $C'$ such that $\pi^{-1}(z^j)=R^j$ is a chain of projective lines i.e., $R^j=\cup R^j_{\iota}$ such that $R^j_{\iota} \cong \P^1$, $R^j_{\iota} \cap R^j_{\iota'}$ is a singleton set if $|{\iota}-{\iota'}|=1$, otherwise empty and $R^j$ meets other components of $C$ at exactly two points $p^j_1, p^j_2$.\\

\tikzset{every picture/.style={line width=0.75pt}} %set default line width to 0.75pt        

\begin{tikzpicture}[x=0.75pt,y=0.75pt,yscale=-1,xscale=1]
%uncomment if require: \path (0,264); %set diagram left start at 0, and has height of 264

%Shape: Parabola [id:dp2819893267108876] 
\draw   (151.39,83.91) .. controls (-18.37,129.43) and (-18.01,173.46) .. (152.45,216.01) ;
%Straight Lines [id:da5085612523068268] 
\draw    (68.67,84.5) -- (168.67,139.5) ;
%Straight Lines [id:da3458888019267242] 
\draw    (56.57,217.3) -- (113.5,190.89) -- (131.9,182.98) -- (154.31,172.83) -- (165.83,167.85) -- (171.53,165.38) ;
%Straight Lines [id:da4283150395806502] 
\draw    (141.39,98.02) -- (141.3,125.69) -- (141.28,131.85) -- (141.26,138.89) -- (141.24,145.05) -- (141.14,174.97) -- (141.11,181.42) -- (141.04,202.25) ;
%Shape: Ellipse [id:dp8845926950058999] 
\draw  [fill={rgb, 255:red, 0; green, 0; blue, 0 }  ,fill opacity=1 ] (137.34,150.94) .. controls (137.33,149.73) and (139.02,148.74) .. (141.1,148.74) .. controls (143.18,148.75) and (144.88,149.73) .. (144.89,150.95) .. controls (144.89,152.16) and (143.21,153.15) .. (141.12,153.14) .. controls (139.04,153.14) and (137.35,152.16) .. (137.34,150.94) -- cycle ;
%Straight Lines [id:da3456840896711936] 
\draw    (201.95,130.84) -- (266.92,132.23) ;
\draw [shift={(268.92,132.28)}, rotate = 181.22] [color={rgb, 255:red, 0; green, 0; blue, 0 }  ][line width=0.75]    (10.93,-3.29) .. controls (6.95,-1.4) and (3.31,-0.3) .. (0,0) .. controls (3.31,0.3) and (6.95,1.4) .. (10.93,3.29)   ;
%Shape: Parabola [id:dp0028228031861633474] 
\draw   (429.33,72.67) .. controls (270.87,114.49) and (270.48,157.32) .. (428.14,201.13) ;
%Straight Lines [id:da11226215018824615] 
\draw    (409.35,50.67) -- (408.55,174.75) -- (408.31,212.17) ;
%Shape: Ellipse [id:dp6014754932639756] 
\draw  [fill={rgb, 255:red, 0; green, 0; blue, 0 }  ,fill opacity=1 ] (403.6,131.18) .. controls (403.6,129.48) and (405.71,128.11) .. (408.32,128.11) .. controls (410.93,128.12) and (413.04,129.5) .. (413.03,131.2) .. controls (413.03,132.9) and (410.92,134.27) .. (408.31,134.27) .. controls (405.7,134.26) and (403.59,132.88) .. (403.6,131.18) -- cycle ;
%Shape: Circle [id:dp5397315535174131] 
\draw  [fill={rgb, 255:red, 0; green, 0; blue, 0 }  ,fill opacity=1 ] (56,185.83) .. controls (56,184.27) and (57.27,183) .. (58.83,183) .. controls (60.4,183) and (61.67,184.27) .. (61.67,185.83) .. controls (61.67,187.4) and (60.4,188.67) .. (58.83,188.67) .. controls (57.27,188.67) and (56,187.4) .. (56,185.83) -- cycle ;
%Shape: Circle [id:dp050447411472313464] 
\draw  [fill={rgb, 255:red, 0; green, 0; blue, 0 }  ,fill opacity=1 ] (64.33,115.67) .. controls (64.33,114.29) and (63.21,113.17) .. (61.83,113.17) .. controls (60.45,113.17) and (59.33,114.29) .. (59.33,115.67) .. controls (59.33,117.05) and (60.45,118.17) .. (61.83,118.17) .. controls (63.21,118.17) and (64.33,117.05) .. (64.33,115.67) -- cycle ;
%Shape: Circle [id:dp76285821543032] 
\draw  [fill={rgb, 255:red, 0; green, 0; blue, 0 }  ,fill opacity=1 ] (359,179.17) .. controls (359,177.42) and (360.42,176) .. (362.17,176) .. controls (363.92,176) and (365.33,177.42) .. (365.33,179.17) .. controls (365.33,180.92) and (363.92,182.33) .. (362.17,182.33) .. controls (360.42,182.33) and (359,180.92) .. (359,179.17) -- cycle ;
%Shape: Circle [id:dp34622509475162533] 
\draw  [fill={rgb, 255:red, 0; green, 0; blue, 0 }  ,fill opacity=1 ] (359,93.67) .. controls (359,91.64) and (360.64,90) .. (362.67,90) .. controls (364.69,90) and (366.33,91.64) .. (366.33,93.67) .. controls (366.33,95.69) and (364.69,97.33) .. (362.67,97.33) .. controls (360.64,97.33) and (359,95.69) .. (359,93.67) -- cycle ;

% Text Node
\draw (106,228.4) node [anchor=north west][inner sep=0.75pt]    {$C$};
% Text Node
\draw (230,107.4) node [anchor=north west][inner sep=0.75pt]    {$\pi $};
% Text Node
\draw (403,230.4) node [anchor=north west][inner sep=0.75pt]    {$C'$};
% Text Node
\draw (415,79.4) node [anchor=north west][inner sep=0.75pt]    {$z^{1}$};
% Text Node
\draw (420,177.4) node [anchor=north west][inner sep=0.75pt]    {$z^{2}$};
% Text Node
\draw (103,116.4) node [anchor=north west][inner sep=0.75pt]    {$R^{1}$};
% Text Node
\draw (108,165.4) node [anchor=north west][inner sep=0.75pt]    {$R^{2}$};
% Text Node
\draw (46,96.4) node [anchor=north west][inner sep=0.75pt]    {$x^{1}$};
% Text Node
\draw (40,185.4) node [anchor=north west][inner sep=0.75pt]    {$x^{2}$};
% Text Node
\draw (143,143.4) node [anchor=north west][inner sep=0.75pt]    {$x^{3}$};
% Text Node
\draw (421,123.4) node [anchor=north west][inner sep=0.75pt]    {$x^{3}$};
% Text Node
\draw (348,70.4) node [anchor=north west][inner sep=0.75pt]    {$x^{1}$};
% Text Node
\draw (347,182.4) node [anchor=north west][inner sep=0.75pt]    {$x^{2}$};

\end{tikzpicture}

	We fix the notation $E$ to denote a vector bundle on $C$, where $C$ will denote a marked Gieseker curve. The rank and degree of $E$ are denoted by $r$ and $d$ respectively. Then $E|_{R^j_{\iota}} \cong \oplus_{i=1}^{r} \mathcal{O}_{{\P}^1}(a^j_{\iota i}) $.
	
		\begin{definition}
			\label{positivebundle}
		The bundle $E$ is called positive if  $a^j_{\iota i} \ge 0$ for all $j, \iota, i$. The bundle $E$ is called strictly positive if $E$ is positive and for all $j, \iota$ there exist $i$ such that $ a^j_{\iota i} >0$. The bundle $E$ is strictly standard if it is strictly positive and $0 \le a^j_{\iota i} \le 1$.
	\end{definition}

	\begin{definition}
		\label{Giesekerbundle}
		A vector bundle $E$ on $C$ is called a Gieseker vector bundle if $E$ is strictly positive and $\pi_* E$ is a pure sheaf of dimension $1$ on $C'$.
	\end{definition}
	 
	Let $\{x^1, x^2, \cdots , x^n\}$ are the marked points on $C$. The marked points $\{x^1, x^2, \cdots, x^n\}$ are not on the chain of projective lines $R^j$ which are contracted to get a canonical marked stable curve.\\
	
	 A parabolic structure on $E$ at the points $x^i$ is a quasi parabolic structure(QPS) which is a decreasing filtration of sheaves
	\begin{equation}
	\label{qpsforGiesekerbundle}
		E=F^i_1 E  \supseteq F^i_2 E \supset F^i_3 E \supset \cdots \supseteq F^i_{l_i+1} E=E(-x^i)
	\end{equation}
	together with an increasing sequence of weights
	\begin{equation}
	0 \le \alpha^i_1 < \alpha^i_2 < \cdots <\alpha^i_{l_i} <1
	\end{equation}
	
 The map $\pi$ gives a canonical isomorphism $C-\cup R^j \cong C'-\{z^j:1 \le j \le c\}$. Therefore it is natural to use the same notation $x^i$ for the points $\pi(x^i)$ on $C'$. We will fix the line bundle $\left(\omega_{C}(\sum x^i)\right)^{\otimes \rho}$ on $C$ which is isomorphic with the pullback $\pi^{*}\left(\omega_{C'}(\sum x^i)\right)^{\otimes \rho}$. We will denote the line bundle $\left(\omega_{C}(\sum x^i)\right)^{\otimes \rho}$ by $\mathcal{O}_{C}(1)$.\\

Once and for all we will fix numerical type (a set of invariants) for our moduli problem---rank$(E)=r$, degree of $E=d$, Hilbert polynomials of $E/F^i_j E$ with respect to $\mathcal{O}_{C}(1)$ is $r^i_j$ which is the constant dim$_{\mathbb{C}} E/F^i_j E$ and weights $\alpha^i_j \in \mathbb{Q}$. \\

The parabolic Gieseker bundle will be denoted by $(E_*)$. Since $\pi_*$ is a left exact functor we have a filtration of sheaves on $C'$:
\begin{equation}
	\label{pushforwardofparabolicGie}
		\pi_* E=\pi_* F^i_1 E  \supseteq \pi_* F^i_2 E  \supset \cdots \supseteq \pi_* F^i_{l_i+1} E=\pi_*\left(E(-x^i)\right) \cong (\pi_* E)(-x^i)
\end{equation}
 
\begin{lemma}
	The parabolic sheaves $E_*$ on $C$ and $(\pi_*E)_*$ on $C'$ have the same numerical type $(r, d, r^i_j, \alpha^i_j)$.
\end{lemma}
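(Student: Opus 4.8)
The numerical type of $E_*$ on $C$ is $(r,d,r^i_j,\alpha^i_j)$ by the way these invariants were fixed, so the actual content of the lemma is that $\pi_*$ carries this data to the same tuple on $C'$. The plan is to verify the four ingredients separately; only one of them — the vanishing of $R^1\pi_*$ of the sheaves in sight — needs a genuine argument, and that is where I expect the difficulty to lie. The weights need nothing: by construction the parabolic structure on $\pi_* E$ is the filtration \eqref{pushforwardofparabolicGie}, which is equipped with exactly the weights $\alpha^i_j$ of $E_*$. The (total) rank is almost as cheap: $\pi$ restricts to an isomorphism $C\setminus\bigcup_j R^j \xrightarrow{\ \sim\ } C'\setminus\{z^1,\dots,z^c\}$, so it identifies the generic point of each non-contracted component of $C$ with that of a component of $C'$; since $E$ has uniform rank $r$, so does $\pi_* E$ on every component of $C'$. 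Because $\mathcal{O}_C(1)=\pi^*\!\left(\omega_{C'}(\sum x^i)\right)^{\otimes\rho}$ has degree $0$ on each contracted $R^j_\iota$ while $\sum a_i=1$ on both curves, this gives $\mathrm{totr}(\pi_* E)=r=\mathrm{totr}(E)$.

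The one substantial point is the vanishing
\begin{equation}
\label{R1vanishing}
R^1\pi_* E = 0, \qquad\text{and more generally}\qquad R^1\pi_*(F^i_j E)=0 \quad\text{for all }i,j .
\end{equation}
Any $R^1\pi_*$ of a coherent sheaf is supported on the nodes $z^j$, whose scheme-theoretic fibres are the chains $R^j=\bigcup_\iota R^j_\iota$, so by the theorem on formal functions it is enough to see that $H^1$ vanishes on every infinitesimal neighbourhood of each $R^j$. Filtering these neighbourhoods by powers of the ideal of $R^j$ and using that $E$ has nonnegative degrees on each $R^j_\iota\cong\mathbb{P}^1$ by positivity \eqref{positivebundle} (and that the conormal-type graded pieces are likewise nonnegative, because $\pi^*\omega_{C'}\cong\omega_C$ on the Gieseker curve \eqref{markedGiecurve}), the claim reduces to $H^1(R^j_\iota,\mathcal{F})=0$ for a globally generated bundle $\mathcal{F}$ on $\mathbb{P}^1$, which a Mayer--Vietoris induction along the chain delivers (surjectivity of the evaluation maps at the internal nodes only uses global generation). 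The identical computation applies to $F^i_j E$, which agrees with $E$ away from $x^i$ and hence on every infinitesimal neighbourhood of a contracted chain. This cohomology vanishing is the standard fact underlying Gieseker-type degenerations, cf. \cite{MR739786}, \cite{MR1687729}; it is also the reason strict positivity is built into the definition of a Gieseker bundle \eqref{Giesekerbundle}.

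Granting \eqref{R1vanishing}, the degree is immediate. Since $R^q\pi_* E=0$ for all $q\ge 1$ (for $q\ge 2$ by the dimension of the fibres), the Leray spectral sequence gives $H^k(C',\pi_* E)\cong H^k(C,E)$ for all $k$, hence $\chi(\pi_* E)=\chi(E)$. As $C$ and $C'$ have the same arithmetic genus $g$ (contracting chains of $\mathbb{P}^1$'s to nodes leaves arithmetic genus unchanged) and $\mathrm{totr}(\pi_* E)=\mathrm{totr}(E)=r$, the definition of degree yields
\begin{equation}
\deg(\pi_* E)=\chi(\pi_* E)-\mathrm{totr}(\pi_* E)(1-g)=\chi(E)-\mathrm{totr}(E)(1-g)=\deg(E)=d .
\end{equation}

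Finally, for the quasi-parabolic dimensions I would push forward the short exact sequence $0\to F^i_j E\to E\to E/F^i_j E\to 0$. The cokernel $E/F^i_j E$ is a skyscraper supported at $x^i$, which lies in the isomorphism locus of $\pi$, so $\pi_*(E/F^i_j E)\cong E/F^i_j E$; together with $R^1\pi_*(F^i_j E)=0$ from \eqref{R1vanishing}, the pushed-forward sequence $0\to \pi_* F^i_j E\to \pi_* E\to E/F^i_j E\to 0$ is exact. Hence $(\pi_* E)/(\pi_* F^i_j E)\cong E/F^i_j E$ has dimension $r^i_j$. Collecting the four points, $(\pi_* E)_*$ on $C'$ has numerical type $(r,d,r^i_j,\alpha^i_j)$, the same as $E_*$ on $C$. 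The only genuine obstacle is \eqref{R1vanishing}; everything else is bookkeeping with $\pi$ being an isomorphism away from the contracted chains.
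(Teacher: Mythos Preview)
Your proposal is correct, but it organizes the argument differently from the paper and does more work than is strictly needed for this lemma.

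For the degree, you and the paper proceed identically: equality of Euler characteristics plus equal arithmetic genus plus equal total rank. The paper simply cites \cite[Proposition~3]{MR1687729} for $H^i(C,E)\cong H^i(C',\pi_*E)$, whereas you supply a sketch via $R^1\pi_*E=0$ and Leray. That is fine.

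The genuine difference is in the quasi-parabolic dimensions $r^i_j$. You push forward the short exact sequence $0\to F^i_jE\to E\to E/F^i_jE\to 0$ and invoke $R^1\pi_*(F^i_jE)=0$ to conclude $(\pi_*E)/(\pi_*F^i_jE)\cong E/F^i_jE$. The paper instead argues purely locally: since $F^i_jE$ agrees with $E$ away from $x^i$, the quotient $\pi_*E/\pi_*F^i_jE$ is automatically supported at $x^i$; choosing a neighbourhood $U^i\ni x^i$ on which $\pi$ is an isomorphism and using that pushforward commutes with restriction to opens (\cite[III, Prop.~9.3]{MR0463157}) gives the identification $E/F^i_jE\cong \pi_*E/\pi_*F^i_jE$ without ever touching $R^1\pi_*$. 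So the paper's route is more economical here: it needs the cohomology isomorphism only for the degree, not for the flags.

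What your approach buys is that the vanishing $R^1\pi_*(F^i_jE)=0$ is exactly what the paper establishes and uses later in Proposition~\ref{fiberprodprop} (step $\RNum{1}$), so you are front-loading a fact that is needed anyway. One small wobble: your formal-functions remark about ``conormal-type graded pieces'' being controlled by $\pi^*\omega_{C'}\cong\omega_C$ is phrased as if on a surface. On the single curve $C$, the ideal sheaf of $R^j$ is nonzero only at the attaching nodes $p^j_1,p^j_2$, so each successive thickening adds only a $0$-dimensional scheme and $H^1$ is unchanged; the vanishing therefore reduces immediately to $H^1(R^j,E|_{R^j})=0$, which your Mayer--Vietoris argument along the chain handles. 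This does not affect correctness, only the justification you gave for that step.
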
 
\begin{proof}
	It is clear that ranks will remain same under pushforward $\pi_*$. Since genus$(C)=$genus$(C')$ and $\text{H}^i(C, E) \cong \text{H}^i(C', \pi_* E)$ for $i \ge 0$ \cite[Proposition $3$]{MR1687729}, by using Riemann-Roch theorem we will get $\text{deg}(E)=\text{deg}(\pi_*E)=d$. \\
	 
 Let $U^i$ be a neighbourhood at $x^i$ such that restriction of $\pi$ induces an isomorphism $\pi:V^i=\pi^{-1}(U^i) \cong U^i$. Since both $E/F^i_j E$ and $\pi_* E/\pi_* F^i_j E$ are supported at points $x^i$ we will get 
 \begin{equation}
 \label{numericaltypeiso}
  E/F^i_j E \cong E|_{V^i}/F^i_j E|_{V^i} \cong \pi_*(E|_{V^i})/\pi_*(F^i_j E|_{V^i}) \cong  (\pi_*E)|_{U^i}/(\pi_* F^i_j E)|_{U^i}
 \end{equation}
 
The last isomorphism in \eqref{numericaltypeiso} is due to \cite[Proposition 9.3]{MR0463157} and we have $(\pi_*E)|_{U^i}/(\pi_* F^i_j E)|_{U^i}$ 
      is isomorphic with $ \pi_* E/\pi_*F^i_j E$. Therefore the QPS $r^i_j$ remains same under $\pi_*$. Since we will assign the same weights $\alpha^i_j$ for the filtration \eqref{pushforwardofparabolicGie} the numerical type will remain the same.
\end{proof}

\begin{definition}
	\label{stabilityofparGiebun}
	The parabolic Gieseker bundle $E_*$ is called stable if $\pi_* E$ with respect to the parabolic structure \eqref{pushforwardofparabolicGie} is $p_2$-stable \eqref{p_2stability}.
\end{definition}

\begin{remark}
	Similarly if we define semistability of $E_*$ to be the $p_2$-semistability of $\pi_* E$, then it will not be a GIT semistability notion \eqref{semistablemoduli}. One needs to impose additional condition along with $\pi_* E$ to be $p_2$-semistable. A. Schmitt in \cite[Definition 2.2.10]{MR2106123} has worked out a notion of semistability for Gieseker vector bundles. But in this article we will restrict our attention to stable parabolic Gieseker vector bundles. 
\end{remark}
Let $\label{objectsofmoduli}\mathfrak{{\mathfrak S}}$ be the family 
\begin{center}
	$\mathfrak{{\mathfrak S}}$=\{``Aut- equivalance" classes of pairs $(C, E_*)$ where $C$ is a marked Gieseker curve and $E_*$ is a stable parabolic Gieseker bundle on $C$ of fixed numerical type : $C \rightarrow C'$ is the contraction morphism to the marked stable curve $[C'] \in \overline{M}_{_{g, n}}$, where $[C']$ denotes the isomorphism class of the curve $C'$\}
\end{center}

Two pairs $(C, E_*)$ and $(D, F_*)$ are called  Aut- equivalant if there exist a marked isomorphism $\psi:C \cong D$ such that we have a parabolic isomorphism $ E_* \cong  \psi^*F_*$.\\

Our aim is to give a scheme structure on the set $\mathfrak{{\mathfrak S}}$.

	\subsection{Boundedness}
	In this subsection we will prove that the objects in the set $\mathfrak{{\mathfrak S}}$ form a bounded family. In particular we will have the following result
	\begin{lemma}
		\label{Giesekerbddness}
		There exist $l_0 \gg 0$ such that for all $l \ge l_0$ we have the cohomology vanishing $\text{H}^1(C, E(l))=0$ and the natural map $\text{H}^0(C, E(l)) \otimes \mathcal{O}_{C} \rightarrow E(l)$ is  surjective for all $(C, E_*) \in \mathfrak{{\mathfrak S}}$.\\
		
		 Furthermore the natural morphism $C \rightarrow \text{Gr}(\text{H}^0(E(l)), r)$  is a closed embedding. 
	\end{lemma}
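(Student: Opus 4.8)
The plan is to reduce the two cohomological assertions and the embedding statement to the corresponding facts for the pushforward $\pi_* E$ on the marked stable model $C'$, where the boundedness of the family of $p_2$-stable parabolic pure sheaves of fixed numerical type \eqref{boundedness} applies, and then to recover the statements on $C$ using the positivity built into the definition of a Gieseker bundle \eqref{Giesekerbundle}. First I would record the cohomological comparison. Since $C$ is a marked Gieseker curve \eqref{markedGiecurve} one has $\mathcal{O}_{C}(1)=\pi^{*}\mathcal{O}_{C'}(1)$, so $E(l)|_{R^j_\iota}=E|_{R^j_\iota}$ is still positive \eqref{positivebundle} on every component of every contracted chain; hence $R^1\pi_*\big(E(l)\big)=0$ and, by the projection formula, $\pi_*\big(E(l)\big)=(\pi_* E)(l)$, and the Leray spectral sequence gives $\text{H}^i\big(C,E(l)\big)\cong\text{H}^i\big(C',(\pi_* E)(l)\big)$ for all $i,l$, as in \cite[Proposition 3]{MR1687729}. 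By the stability hypothesis \eqref{stabilityofparGiebun} and the preceding lemma, $\big(C',(\pi_* E)_*\big)$ is a $p_2$-stable parabolic pure sheaf on a marked stable curve of the fixed numerical type $(r,d,r^i_j,\alpha^i_j)$, so $(C,E_*)\mapsto\big(C',(\pi_*E)_*\big)$ carries $\mathfrak S$ into the bounded family of \cite{schluter2011universal}. Consequently there is $l_1$ with $\text{H}^1\big(C',(\pi_*E)(l)\big)=0$ and $(\pi_*E)(l)$ globally generated for all $l\ge l_1$ and all $(C,E_*)\in\mathfrak S$; the first of these already gives $\text{H}^1\big(C,E(l)\big)=0$.

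Next I would bound the Gieseker curves and bundles themselves, so that a uniform vanishing theorem is available directly on $C$. Strict positivity forces $\deg\big(E|_{R^j_\iota}\big)\ge 1$ on every chain component, while $\deg E=\deg\pi_*E=d$ is fixed and $p_2$-semistability of $\pi_*E$ bounds from below the degree of $E$ on the non-contracted locus; combined with the purity of $\pi_*E$ at each contracted node $z^j$, which in the local analysis of \cite{MR1687729} limits the length of each chain in terms of $r$, this shows the chains have length bounded by a constant depending only on $r$ and $d$ (compare \cite{MR2106123}). Hence the curves $C$ occurring in $\mathfrak S$ form a bounded family, and since $E$ on the non-contracted locus is determined by $\pi_* E$ while $E$ on the chains has bounded degree, the pairs $(C,E)$ form a bounded family. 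A uniform vanishing theorem then yields $l_2$ such that for $l\ge l_2$ the sheaf $E(l)$ twisted down along any point or any subcurve of $C$ has vanishing $\text{H}^1$; restricting the defining exact sequence of a point, resp.\ of a chain component, shows $\text{H}^0\big(C,E(l)\big)$ surjects onto the corresponding fibre, so $E(l)$ is globally generated. This is the step that genuinely needs the Gieseker structure: on a contracted chain $\mathcal{O}_C(l)$ is trivial, so global generation there is forced precisely by the strict positivity of $E|_{R^j_\iota}$.

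Finally, the morphism $\iota_l\colon C\to\text{Gr}\big(\text{H}^0(E(l)),r\big)$ sending $x$ to the quotient $\text{H}^0(E(l))\twoheadrightarrow E(l)_x$ exists once $E(l)$ is globally generated and is proper since $C$ is projective, so it suffices to check that for $l$ large, uniformly over the bounded family, $\iota_l$ is injective on points and on tangent vectors. This follows from the vanishing for $l\gg0$ of $\text{H}^1$ of $E(l)$ twisted by the ideal of two distinct points and by the square of the ideal of one point --- the standard relative-very-ampleness estimates, again uniform over a bounded family --- where at a contracted chain one uses that each $R^j_\iota$ carries a quotient line bundle of degree $\ge 1$, so $\iota_l$ restricted to the chain is already a closed embedding and in particular separates the chain's points, its internal nodes, and its two attaching points. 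Taking $l_0=\max(l_1,l_2)$ finishes the proof. I expect the middle paragraph to be the main obstacle: in contrast to the cohomology vanishing, the bound on the length of the contracted chains and the uniform global generation of $E(l)$ along them cannot be imported from $C'$ and must be extracted from the positivity hypotheses defining a Gieseker bundle.
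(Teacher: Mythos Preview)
Your proposal is correct and reaches the same conclusion, but the route differs from the paper's in one significant respect. For the $\text{H}^1$-vanishing you both proceed identically: push forward to $C'$, invoke boundedness of the $p_2$-stable family \eqref{boundedness}, and use $\text{H}^i(C,E(l))\cong\text{H}^i(C',\pi_*E(l))$. For global generation and the embedding, however, the paper does \emph{not} bound the Gieseker curves $C$ or the chain lengths. Instead it introduces the partial normalisation $\tilde{C'}=\overline{C\setminus\cup R^j}$ with $\nu\colon\tilde{C'}\to C'$, and shows that $\{E(l)|_{\tilde{C'}}:(C,E_*)\in\mathfrak S\}$ is a bounded family by comparing $E(l)|_{\tilde{C'}}$ with $\tilde E:=\nu^*\pi_*E(l)/\text{Tor}$ through two short exact sequences and deducing global generation and $\text{H}^1$-vanishing for $\tilde E$ from the same for $\pi_*E(l)$; the point is that $\tilde{C'}$ depends only on $C'$, so boundedness here is posed over the fixed family $\mathcal{X}/{\tt S_{_{g,n}}}$. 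The lemma then follows from the structural criterion \eqref{boundednessproposition} (the analogue of \cite[Proposition~4]{MR1687729}), which reduces global generation of $E(l)$ and the closed-embedding property of $C\to\text{Gr}$ to cohomological surjectivity statements for $E(l)$ restricted to $\tilde{C'}$ and to the $\tilde{C^j}=\overline{C\setminus R^j}$. Your approach, by contrast, bounds the chain lengths via purity of $\pi_*E$ at the nodes and hence bounds the curves $C$ themselves, then argues boundedness of the pairs $(C,E)$ directly. This can be made to work, but two of your intermediate claims are loose: the phrase ``$E$ on the non-contracted locus is determined by $\pi_*E$'' is too strong (only its boundedness follows, and establishing that is exactly the $\nu^*\pi_*E$ comparison the paper carries out), and your embedding argument on the chains---a degree~$\ge 1$ line bundle quotient on each $R^j_\iota$---separates points within a single component but not a priori across components or at the internal and attaching nodes, which is precisely the content packaged into \eqref{boundednessproposition}. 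The paper's route buys a cleaner argument that never has to parametrise the varying semistable curves; yours makes the geometric constraint on Gieseker curves explicit but then has to redo by hand what \eqref{boundednessproposition} already provides.
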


\begin{proof}
	We will denote the partial normalization $\overline{C-\cup R^j}$ of $C'$ by $\tilde{C'}$ and $\nu:\tilde{C'} \rightarrow C'$ is the normalization morphism. Then the inclusion $i: \tilde{C'} \hookrightarrow C$ is a closed immersion.
	\begin{equation}
	\label{partialnormalization}
		\begin{tikzcd}
		\tilde{C'} \arrow[rr, "i", hook] \arrow[dr,swap, "\nu"]
		& &C \arrow[dl, "\pi"]\\%
		& C'
		\end{tikzcd}
	\end{equation}
	
	  The family of sheaves \{$\pi_* E$:$E \in {\mathfrak S}$ \} is contained inside the family
	  
	  \begin{center}
	  	\bigg\{$p_2$ semistable sheaves $\mathcal{E}_*$ on $\mathcal{X}_s$ with fixed numerical type $(r, d, r^i_j, \alpha^i_j)$ :$s \in {\tt S_{_{g,n}}}$ \bigg\}
	  \end{center} 
  
   Hence by \eqref{boundedness} there exist $l_0 \gg 0$ such that for all $l \ge l_0$ we will have the following
   
   \begin{center}
   	$(1)$ The cohomology vanishing $\text{H}^1(C', \pi_*E(l))=0$.
   \end{center}
	  
	 \begin{center}
	 $(2)$ The natural map $\text{H}^0(C', \pi_*E(l)) \otimes \mathcal{O}_{C'} \rightarrow \pi_*E(l)$ is  surjective i.e., $\pi_*E(l)$ is globally generated.
	 \end{center}
	for all $(C, E_*) \in \mathfrak{{\mathfrak S}}$\\
	
	Since $\text{H}^i(C, E(l)) \cong \text{H}^i(C', \pi_*E(l))$, we have $\text{H}^1(C, E(l))=0$ for $l \ge l_0$.\\
	
	Our aim is to show that $\{E(l)|_{\tilde{C'}}:E \in {\mathfrak S}\}$ is a bounded family. Then the lemma will follow from  \eqref{boundednessproposition}.\\
	
	Since the diagram \eqref{partialnormalization} commutes, we have $\nu=\pi \circ i$ and so $\nu^* \pi_* E(l) = i^* \pi^* \pi_* E(l)$. From the adjoint property we get a natural map $ \pi^* \pi_* E \rightarrow E$.  Applying $i^*$ will give us the following short exact sequence
	\begin{equation}
		0 \rightarrow \nu^* \pi_* E(l)/\text{Tor}(\nu^* \pi_* E(l)) \rightarrow E(l)|_{\tilde{C'}} \rightarrow \mathcal{Q} \rightarrow 0
	\end{equation}
	since $\nu $ is an isomorphism except for finitely many points $\mathcal{Q}$ is a torsion sheaf. We will denote $\nu^* \pi_* E(l)/\text{Tor}(\nu^* \pi_* E(l))$ by $\tilde{E}$. So it is enough to show $\tilde{E}$ is globally generated and $\text{H}^1(\tilde{C'}, \tilde{E})=0$.\\
	
	By definition of $\tilde{E}$ we have the quotient map $\nu^* \pi_* E(l) \rightarrow \tilde{E}$. Applying $\nu_*$ and using the natural map $\pi_* E(l) \rightarrow \nu_* \nu^* \pi_* E(l)$, we will get the following short exact sequence on $C'$ 
	\begin{equation}
		0 \rightarrow \pi_*E(l) \rightarrow \nu_* \tilde{E} \rightarrow \mathcal{Q'} \rightarrow 0
	\end{equation}
	where $\mathcal{Q'}$ is a torsion sheaf. Since $\pi_* E(l)$ is globally generated, by the five lemma $\nu_* \tilde{E}$ is globally generated. We also get $\text{H}^1(C', \nu_* \tilde{E})=0$ and $\text{H}^0(\tilde{C'}, \tilde{E}) \cong \text{H}^0(C', \nu_* \tilde E)$. The sheaf $\tilde{E}$ is locally free at $p_1^j, p_2^j$. We can check the equality $(\nu_* \tilde{E})_{z^j}=(\tilde{E})_{p_1^j} \oplus (\tilde{E})_{p_2^j}$. Hence the global generation of $\nu_* \tilde{E}$ will imply that $\tilde{E}$ is globally generated.
\end{proof}
	
	\subsection{Relative divisors on families of semi-stable curves}
\label{reldivGie}
We fix a natural number $l \ge l_0$ such that proposition \eqref{Giesekerbddness} holds. It follows from proposition \eqref{Giesekerbddness} that $\text{dim} \ \text{H}^0(C, E(l))$ is independent of the pair $(C, E) \in \mathcal{{\mathfrak S}}$ since $r$, $d$ are fixed. We will fix a vector space $V_l$ over $\mathbb{C}$ such that $\text{dim}_{\mathbb{C}}\text{H}^0(C, E(l)) = \text{dim}_{\mathbb{C}} V_l$. We will rigidify the moduli problem by adding a new data, namely an isomorphism $\text{H}^0(C, E(l)) \cong V_l$.
Let $\text{Gr}(V_l, r)$ be the Grassmannian of $r$ dimensional quotients of the $\mathbb{C}$ vector space $V_l$. Let $\mathcal{G}=\mathcal{G}(r, d)$ be the functor \eqref{Giefunctor}. Since the map $i:C \hookrightarrow \text{Gr}(V_l, r)$ is a closed embedding \eqref{Giesekerbddness}, it follows that the map $\pi \times i:C \hookrightarrow C' \times \text{Gr}(V_l, r)$ is also a closed embedding. The vector bundle $E(l)$ on $C$ is the pullback of the tautological quotient bundle on $\text{Gr}(V_l, r)$. This shows the functor \eqref{Giefunctor} is natural to consider in this context.  The family $\mathcal{X}/{\tt S_{_{g,n}}}$ has natural sections $\sigma_i:{\tt S_{_{g,n}}} \rightarrow \mathcal{X}$ which will give us divisors on the semi-stable curves $C$.\\
 
 Let $D^i$ be the associated divisors corresponding to sections $\sigma_i$, $1 \le i \le n$. So $D^i \rightarrow {\tt S_{_{g,n}}}$ is flat. Let $T$ be a ${\tt S_{_{g,n}}}$ scheme. By the base change $T \rightarrow {\tt S_{_{g,n}}}$ we get the flat morphism $D^i \times_{\tt{S}} T \rightarrow T$. We will denote $D^i \times_{\tt{S}} T$ by $D^i_T$. So $D^i_T$ are relative divisors\footnote{The word ``relative" in relative divisors means the map $D^i_T \rightarrow T$ is flat} of $\mathcal{X} \times_{\tt{S}} T$. \\
 
 Let $\Delta \in \mathcal{G}(T)$. We consider the induced morphism $\pi_T:\Delta \rightarrow \mathcal{X} \times _S T$ which is the collapsing morphism of the family of curves \{$\Delta_t: t \in T$\}. The components of each fiber, where the restriction of the relative dualizing sheaf $\omega_{\Delta/T}$ is trivial, are getting contracted. So the morphism $\pi_T$ is a birational morphism.
 \begin{equation}
 		\begin{tikzcd}[column sep=4em,row sep=4em]
 		D^i_T \times_{\left(\mathcal{X} \times_S T\right)} \Delta \rar [hook, "closed"] \dar\drar[phantom, "\square"] & \Delta \dar \\%
 		D^i_T \rar[swap, hook, "closed"] & \mathcal{X} \times_S T 
 		\end{tikzcd}
 	\end{equation}
 	
 Since for $t \in T$, the divisor $D^i_t$ is supported on the nonsingular locus of the curves $\mathcal{X}_t$, we see that $D^i_T$ sits inside the isomorphism locus of the birational map $\pi_T$. Thus the morphism $D^i_T \times_{\left(\mathcal{X} \times_S T\right)} \Delta \cong D^i_T$ is an isomorphism. \\
 
 We will use the same notation $D^i_T$ to denote the divisor $D^i_T \times_{\left(\mathcal{X} \times_S T\right)} \Delta$ on $\Delta$. The use of this notation should be clear from the context.

 \begin{remark}
 	\label{crucialremark}
 	The family $(\Delta, D^i_T)$ over $T$ is a family of marked Gieseker curves \eqref{markedGiecurve} (which are marked semi stable curves) with respect the morphism $\pi_{T}$. For any closed point $x \in \mathcal{X} \times_S T$, $\pi_T^{-1}(x)$ is either a singleton set or a connected chain of projective lines $R^j$. This map has the the following property:
 	\begin{equation}
 	(\pi_T)_* \mathcal{O}_{\Delta} \cong \mathcal{O}_{\mathcal{X} \times_S T}
 	\end{equation}
 	For a proof of this we refer to \cite[Proposition 6.7]{MR770932}.
 	\end{remark}

 \subsection{Parabolic Gieseker functor}
 \label{parGiefunctor}
 For any closed subscheme $\Delta \in \mathcal{G}(T)$ \eqref{Giefunctor}, let $E$ be be the pullback of the tautological quotient bundle on $\text{Gr}(V_l, r)$ to the closed subscheme $\Delta$. We define the following functor $\mathcal{G}(r, d, r^i_j)$
 \begin{equation}
 \label{Gieseker functor}
 	\mathcal{G}(r, d, r^i_j): \text{Sch}/{\tt S_{_{g,n}}} \rightarrow \text{Sets}
 	\end{equation}
where a ${\tt S_{_{g,n}}}$-scheme $T$ is sent to a  filtration of the bundle $E$ of the form
 	\begin{equation}
 	\label{relGiefiltration}
 		E=F^i_1 E\supset F^i_2 E \supset F^i_3 E \supset \cdots \supseteq F^i_{l_i+1} E=E(-D^i_T)
 	\end{equation} 
 such that the filtration has the following properties:\\
 
 $1.$ The quotients $E/F^i_j E$ are flat over $T$ $\;\forall \;i, \; j$. \\
 
 $2.$ For all $\;t \in T$ the restriction of the filtration \eqref{relGiefiltration} induces a filtration of the bundle $E_t$ on  $\Delta_t$ \eqref{qps} with respect to the divisors $D^i_t$.\\
 
 $3.$ The quotients $(E/F^i_j E)_t$ are supported on $D^i_t$ and are of dimension $r^i_j$.\\
 	
 	There is a forgetful morphism of functors $F:\mathcal{G}(r, d, r^i_j) \rightarrow \mathcal{G}$ that sends the parabolic vector bundles to the underlying vector bundles.\\

	\section{Some technical lemmas}
	\label{Some technical lemmas}
	In this section we will construct a flag variety $\mathfrak{F}_l$ which  represents the functor \eqref{parGiefunctor}. Then we will establish a relationship between $\mathfrak{F}_l$ and the flag variety $\text{F}_l$ for the torsion free parabolic moduli which represents the functor \eqref{flagfunctor}.\\
	
	We see that for all pair $(C, E_*) \in \mathfrak{S}$, the embedding of the curve $C$ in $\mathcal{X} \times \text{Gr}(V_l, r)$ has same Hilbert polynomial. We will denote the polynomial by $p(t)$. Let $\text{Hilb}^{p(t)}(\mathcal{X} \times \text{Gr}(V_l, r))$ be the relative Hilbert scheme over ${\tt S_{_{g,n}}}$. Let $T$ be a scheme over $\tt{S}_{_{g, n}}$ and $\Delta \in \mathcal{G}(T)$ \eqref{Giefunctor}. Then by definition \eqref{Giefunctor} we have the induced morphism $\pi_{T}:\Delta \rightarrow \mathcal{X} \times _S T$. Then the second condition of the definition is equivalent to $\Delta_t$ being a prestable curve of genus $g$ and $\omega_{\Delta_t} \cong \pi_{t}^* \omega_{\mathcal{X}_s}$, where $t$ maps to $s$ and $\pi_{t}:\Delta_t \rightarrow \mathcal{X}_s$ is the restriction of the morphism $\pi_{T}$. Both of these two conditions are open conditions i.e., \{t $\in$ T: $\Delta_t$ is a prestable curve of genus $g$ and $\omega_{\Delta_t} \cong \pi_{t}^* \omega_{\mathcal{X}_s}$\} is an open subset of $T$ \cite[p. 179]{MR739786}.\\
	
	Let $0 \rightarrow \mathcal{K} \rightarrow V_l \otimes \mathcal{O}_{\Delta} \rightarrow E \rightarrow 0$ be the pullback of the tautological short exact sequence on the Grassmannian $\text{Gr}(V_l, r)$. Then the third condition in \eqref{Giefunctor} is equivalent to the conditions $\text{dim}\text{H}^0(\mathcal{K}_t) \le 0$, $\text{dim}\text{H}^0(E_t) \le \text{dim}(V_l)$ and $\text{dim}\text{H}^1(E_t) \le 0$. By the upper semicontinuity of cohomology, these three cohomological conditions are open conditions. Therefore there exist an open subvariety $\text{Y}_{_{g, n}}$ of $\text{Hilb}^{p(t)}(\mathcal{X} \times \text{Gr}(V_l, r))$ over the base ${\tt S_{_{g,n}}}$ which represents the Gieseker functor \eqref{Giefunctor}.
	\subsection{Flag variety}
	Let the closed subscheme $\Delta \hookrightarrow \text{Y}_{_{g, n}} \times_S \left(\mathcal{X} \times \text{Gr}(V_l, r) \right)$ be the universal curve. Let $V_l \otimes \mathcal{O}_{\Delta} \rightarrow \text{Q}$ be the universal quotient on $\Delta$. By the construction \eqref{reldivGie} there exist relative divisors $D^i$ on the universal curve $\Delta$ over $\text{Y}_{_{g, n}}$
	\begin{equation}
	\begin{tikzcd}
	D^i \arrow[rr, hook] \arrow[dr,swap, "\text{flat}"]
	& &\Delta \arrow[dl]\\%
	& \text{Y}_{_{g, n}}
	\end{tikzcd}
	\end{equation}
	
Let  $\text{Q}|_{D^i}$ be the restriction of the  rank $r$ universal bundle $\text{Q}$ on $D^i$. We will consider the relative flag variety over $\text{Y}_{_{g, n}}$ of locally free quotients of the vector bundle $\text{Q}|_{D^i}$ on $D^i$ of rank in decreasing order $(r^i_{l_i+1}, r^i_{l_i}, \cdots , r^i_2)$. We will denote this relative flag variety by $\mathfrak{F}_l^i$. Let $\Delta^i$ be the fiber product $\Delta \times_{\text{Y}} \mathfrak{F}_l^i$. Then there exist a universal family which is a filtration of sheaves on $\Delta^i$ of the form
	\begin{equation}
	\label{universalfil}
		P_1^* \text{Q} = F^i_1\text{Q} \supset F^i_2\text{Q} \supset \cdots \supset F^i_{l_i+1}\text{Q}=P_1^*\text{Q}(-D^i)
	\end{equation}
where $P_1:\Delta^i \rightarrow \Delta $ be the projection map.\\

We have a natural closed embedding of  $\mathfrak{F}_l^i$ inside a product of Grassmannians
\begin{equation}
	\label{Gieflagembedding}
	\mathfrak{F}_l^i \hookrightarrow \text{Y}_{_{g, n}} \times \left(\text{Gr}(V_l, r^i_{l_i+1}) \times \cdots \times \text{Gr}(V_l, r^i_2) \right)
\end{equation}	
Let $\mathfrak{F}_l$ be the fiber product $\mathfrak{F}_l^1 \times_{\text{Y}} \mathfrak{F}_l^2 \times_{\text{Y}} \cdots \times_{\text{Y}} \mathfrak{F}_l^n$. There will be $n$ universal filtrations on $\Delta \times_{\text{Y}} \mathfrak{F}_l$ which are the pullbacks of the filtrations \eqref{universalfil} on $\Delta^i$ corresponding to the $n$ divisors $D^i$. The pullbacks are taken for the projections $\Delta \times_{\text{Y}} \mathfrak{F}_l \rightarrow \Delta^i$ and will remain filtrations since the relative flag varieties $\mathfrak{F}_l^i$ are flat over $\text{Y}_{_{g, n}}$. The flag variety $\mathfrak{F}_l$ along with these $n$ universal filtrations will represent the functor \eqref{parGiefunctor}. We have the following closed embedding of the relative flag variety $\mathfrak{F}_l$
\begin{equation}
\label{totalGieflagembedding}
\mathfrak{F}_l \hookrightarrow \text{Y}_{_{g, n}} \times \left(\text{Gr}(V_l, r^1_{l_1+1}) \times \cdots \times \text{Gr}(V_l, r^1_2) \right) \times \cdots \times \left(\text{Gr}(V_l, r^n_{l_n+1}) \times \cdots \times \text{Gr}(V_l, r^n_2) \right)
\end{equation}
	\subsection{Group action}
	The group $\text{SL}(N)$ and $\text{SL}(V_l)$ acts on $\mathcal{X}$ and $\text{Gr}(V_l, r)$ respectively. Thus the product $\text{SL}(N) \times \text{SL}(V_l)$ induces a natural action on $\mathcal{X} \times \text{Gr}(V_l, r)$ and therefore on $\text{Hilb}^{p(t)}(\mathcal{X} \times \text{Gr}(V_l, r))$. Note that the open subvariety $\tt{Y}_{_{g, n}}$ is invariant under the action of $\text{SL}(N) \times \text{SL}(V_l)$. Thus $\text{SL}(N) \times \text{SL}(V_l)$ will induce a natural action on the flag variety $\mathfrak{F}_l$. In particular, let $g=(g_1, g_2)$ be an element in $\text{SL}(N) \times \text{SL}(V_l)$ and $([C], E_*)$ be in $\mathfrak{F}_l$ where $[C] \in \tt{Y}_{_{g,n}}$. Let $(g \cdot [C])=[D]$ i.e., $D$ is the image of the curve $C$ under the automorphism $g:\mathcal{X} \times \text{Gr}(V_l, r) \rightarrow \mathcal{X} \times \text{Gr}(V_l, r)$. Then the action of $g$ on $\mathfrak{F}_l$ is $g \cdot ([C], E_*)=([D], g_* E_*)$. The closed embedding \eqref{totalGieflagembedding} is $\text{SL}(N) \times \text{SL}(V_l)$ equivariant. 
	\subsection{Relation between flag varieties corresponding to the parabolic pure sheaves and parabolic Gieseker bundles}
	The universal curve $\Delta \hookrightarrow \text{Y}_{_{g, n}} \times_S \left(\mathcal{X} \times \text{Gr}(V_l, r) \right)$ induces the proper birational morphism(canonical contraction) $\pi:\Delta \rightarrow \text{Y}_{_{g, n}} \times_S \mathcal{X}$ which has the property $\pi_* \mathcal{O}_{\Delta} \cong \mathcal{O}_{\mathcal{X} \times_S \text{Y}}$\eqref{crucialremark}. We have the universal quotient $V_l \otimes \mathcal{O}_{\Delta} \rightarrow \text{Q}$ flat over $\text{Y}_{_{g, n}}$. Thus applying $\pi_*$ we will get the morphism of sheaves on $\text{Y}_{_{g, n}} \times_S \mathcal{X}$
	\begin{equation}
	\label{morbetweenGietor}
		V_l \otimes \mathcal{O}_{\mathcal{X} \times_S \text{Y}} \rightarrow \pi_* \text{Q}
	\end{equation}
	The above morphism can be shown to be surjective using the isomorphism $(\pi_* \text{Q})_t \cong (\pi_t)_* \text{Q}_t$ for $t$ in $\text{Y}_{_{g, n}}$ due to \eqref{NSlemma}. Also we have $\pi_* \text{Q}$ is flat over $\text{Y}_{_{g, n}}$. Therefore by the definition of the Quot scheme functor the quotient \eqref{morbetweenGietor} will induce a morphism $\theta:\text{Y}_{_{g, n}} \rightarrow \text{Q}^r_g(\mu, V_l, H)$.\\
	
	We will now state and prove a result which is crucial to prove the properness of certain morphism \eqref{propernessofflag}. 
	\begin{proposition}
		\label{fiberprodprop}
	The following is a fiber product diagram
	\begin{equation}
	\label{fiberprod}
		\begin{tikzcd}[column sep=4em,row sep=3em]
		\mathfrak{F}_l \rar  \dar["\eta"] \drar[phantom, "\square"] & \text{Y}_{_{g, n}}\dar["\theta"] \\%
		\text{F}_l \rar[swap] & \text{Q}^r_g(\mu, V_l, H)
		\end{tikzcd}
	\end{equation}	
	The natural morphism $\eta:\mathfrak{F}_l \rightarrow \text{F}_l$ is $\text{SL}(N)  \times \text{SL}(V_l) $ equivariant.
	\end{proposition}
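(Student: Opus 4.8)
The plan is to verify the universal property of the fiber product directly on $T$-points. Fix an $\tt S_{_{g,n}}$-scheme $T$ together with a morphism $f\colon T \to \mathrm{Y}_{_{g,n}}$ and a morphism $g\colon T \to \text{F}_l$ whose composites to $\text{Q}^r_g(\mu, V_l, H)$ agree, i.e. $\theta \circ f = (\text{F}_l \to \text{Q}^r_g)\circ g$. The datum of $f$ is, by definition of $\mathrm{Y}_{_{g,n}}$ as the scheme representing the Gieseker functor $\mathcal{G}$, a closed subscheme $\Delta_T \hookrightarrow \mathcal{X}\times\mathrm{Gr}(V_l,r)\times_S T$ which is a family of marked Gieseker curves with the universal quotient bundle $E$ on it; and the datum of $g$ is a $T$-flat quotient $\mathcal{E}_T$ on $\mathcal{X}\times_S T$ together with its parabolic filtration of numerical type $(\tt H, r^i_j, \alpha^i_j)$. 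The compatibility $\theta\circ f = (\text{F}_l\to\text{Q}^r_g)\circ g$ says precisely that the underlying quotient sheaf of $\mathcal{E}_T$ is $\pi_{T,*}E$, where $\pi_T\colon\Delta_T \to \mathcal{X}\times_S T$ is the canonical contraction. I must produce a unique $T$-point of $\mathfrak{F}_l$, i.e. a parabolic filtration of $E$ on $\Delta_T$ of the right numerical type, restricting to the given data.

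The key step is to transport the parabolic filtration back along $\pi_T$. Since $\pi_{T,*}\mathcal{O}_{\Delta_T}\cong\mathcal{O}_{\mathcal{X}\times_S T}$ (Remark \ref{crucialremark}) and $\pi_T$ is an isomorphism over a neighbourhood of each relative divisor $D^i_T$ (established in \ref{reldivGie}), the filtration $\mathcal{E}_T = F^i_1\mathcal{E}_T \supseteq F^i_2\mathcal{E}_T \supseteq \cdots$, whose successive quotients are supported on $D^i_T$, pulls back through the isomorphism $\pi_T^{-1}(U^i_T)\cong U^i_T$ to a filtration of $E|_{\pi_T^{-1}(U^i_T)}$ with quotients of the same dimensions $r^i_j$ supported on $D^i_T$. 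Because these quotients are supported away from the contracted chains $R^j$, the filtration extends uniquely to a filtration of $E$ on all of $\Delta_T$ by declaring $F^i_jE = E$ outside $\pi_T^{-1}(U^i_T)$, glued along the overlap; uniqueness of the extension follows since $E$ is locally free and any two extensions agree on a dense open. I then check the three conditions defining $\mathcal{G}(r,d,r^i_j)$: flatness of $E/F^i_jE$ over $T$ follows since these sheaves are (via the isomorphism over $U^i_T$) identified with the $T$-flat sheaves $\mathcal{E}_T/F^i_j\mathcal{E}_T$; the fibrewise condition holds by base change; and the dimension condition $r^i_j$ is inherited. This produces the required morphism $T \to \mathfrak{F}_l$, and its restrictions to $\mathrm{Y}_{_{g,n}}$ and to $\text{F}_l$ are $f$ and $g$ by construction; applying $\pi_{T,*}$ to the constructed filtration on $E$ recovers the filtration on $\pi_{T,*}E = \mathcal{E}_T$ exactly as in the Lemma on numerical types (the isomorphism \eqref{numericaltypeiso} is the local statement underlying this). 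Uniqueness of $T\to\mathfrak{F}_l$ holds because a morphism to $\mathfrak{F}_l = \mathfrak{F}_l^1\times_{\mathrm Y}\cdots\times_{\mathrm Y}\mathfrak{F}_l^n$ is determined by the underlying curve-with-bundle (i.e. the point of $\mathrm{Y}_{_{g,n}}$) together with the chain of locally free quotients of $E|_{D^i_T}$, and the latter is forced by $g$ via the isomorphism over $D^i_T$.

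Equivariance of $\eta$ is then formal: both $\mathfrak{F}_l$ and $\text{F}_l$ carry $\text{SL}(N)\times\text{SL}(V_l)$-actions induced from the action on $\mathcal{X}\times\mathrm{Gr}(V_l,r)$ (and hence on the relative Hilbert/Quot schemes and on the divisors $D^i$, which are $\text{SL}(N)$-equivariant sections), and the morphism $\theta$ is equivariant because $\pi_*$ commutes with the group action; since $\eta$ is built out of $\pi_*$ applied to the universal objects, it intertwines the two actions. The main obstacle I anticipate is the careful handling of the extension-and-gluing step: one must be sure that pulling back the filtration along the open isomorphism $\pi_T^{-1}(U^i_T)\cong U^i_T$ and extending by $E$ elsewhere yields a subsheaf of $E$ that is genuinely a filtration with $T$-flat quotients — i.e. that no extra torsion or bad behaviour is introduced along the boundary between $U^i_T$ and the chains $R^j$ — and that this construction is compatible in families, not just fibrewise. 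Everything else (checking the numerical type, the fibrewise conditions, equivariance) is routine given the results already proved, in particular \ref{crucialremark}, \ref{reldivGie}, and the numerical-type Lemma together with \eqref{numericaltypeiso}.
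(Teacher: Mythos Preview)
Your proof is correct and follows essentially the same route as the paper: both establish the fiber-product property by identifying the functors $\mathcal{G}(r,d,r^i_j)$ and $\mathcal{G}\times_{\underline Q}\underline{F_l}$, transporting parabolic filtrations across $\pi_T$ via the isomorphism over the divisors $D^i_T$. The paper organises this as a pair of mutually inverse natural transformations $T_1$ (push forward) and $T_2$ (pull back), spending most of its effort on showing $T_1$ is well-defined through the vanishing $R^1(\pi_T)_* F^i_j E = 0$ (via Lemma~\ref{NSlemma}), whereas you verify the universal property directly and handle the push-forward compatibility by locality over $D^i_T$; both are valid, and your anticipated gluing obstacle dissolves once one uses the global quotient description $F^i_j E = \ker\bigl(E \to E|_{D^i_T} \to Q^i_j\bigr)$, exactly as the paper does in defining $T_2$.
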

	\begin{proof}
		Let $\underline{Q}$ be the usual Quot scheme functor which is reprsented by $Q^r_g(\mu, V_l, H)$ \eqref{uniformrank} and recall that $\underline{\text{F}_l}$ be the functor \eqref{flagfunctor} defining the flag variety $\text{F}_l$. Thus to prove $\mathfrak{F}_l \cong \text{Y}_{_{g, n}} \times_{\text{Q}^r_g(\mu, V_l, H)} \text{F}_l$, it is enough to prove the following isomorphism between functors:
		\begin{equation}
		\label{isomoffunctors}
		\begin{tikzcd}
		\mathcal{G}(r, d, r^i_j) \arrow[r,shift left,"T_1"]
		&
		\mathcal{G} \times_{\underline{Q}} \underline{F_l} \arrow[l,shift left,"T_2"]
		\end{tikzcd}
		\end{equation}
		such that $T_1 \circ T_2= \text{id}$ and $T_2 \circ T_1= \text{id}$.\\
		
		Given a $S$ scheme $T$, let $(\Delta, E_*)$ be an element in  $\mathcal{G}(r, d, r^i_j)(T)$ with $\Delta \in \mathcal{G}(T)$. We define the morphism $T_1$ as
		\begin{equation}
		T_1((\Delta, E_*))=(\Delta, (\pi_{T})_* E_*)
		\end{equation} 
		We need to prove that $(\pi_{T})_* E_* \in \underline{F_l}(T)$ to make the definition well defined. For that we have to check $(\pi_{T})_* E_*$ satisfies the conditions in the definition \eqref{flatfamilyofparabolicsheaves}. \\
		
		$\RNum{1}.$  We will show that the quotients $(\pi_T)_* E/(\pi_T)_* F^i_j E$ are flat over $T$. By definition the quotients
		$E/F^i_j E$ are flat over $T$ and is supported on $D^i_T$. Since $\pi_T:\Delta \rightarrow \mathcal{X}_{T}$ restricted to $D^i_T$ is an isomorphism, $(\pi_T)_* \left( E/F^i_j E \right)$ is flat over $T$. Therefore it is enough to show that $(\pi_T)_* \left( E/F^i_j E \right) \cong (\pi_T)_* E/(\pi_T)_* F^i_j E$.\\
		
		 From the filtration of $E_*^*$ we get the short exact sequence
		\begin{equation}
		0 \rightarrow F^i_j E \rightarrow E \rightarrow E/F^i_j E \rightarrow 0
		\end{equation}
		Applying the pushforward $(\pi_T)_*$ we get
		\begin{equation}
		0 \rightarrow (\pi_T)_* F^i_j E \rightarrow (\pi_T)_*E \rightarrow (\pi_T)_*(E/F^i_j E) \rightarrow R^1 (\pi_T)_* F^i_j E
		\end{equation}
		
		We want to show $R^1 (\pi_T)_* F^i_j E=0$. The coherent sheaf $ R^1 (\pi_t)_* (F^i_j E)_t$ is the sheaf associated to the graded module $\oplus_{n \ge 0}\text{H}^1(\Delta_t, (F^i_j E)_t(n))$ where tensor product is taken with respect the line bundle $\mathcal{O}_{\Delta_t}(1)=\pi^*\mathcal{O}_{\mathcal{X}_s}(1)$ \eqref{S_{g, n} notation}. Since by definition $E_*$ induces a filtration $(E_t)^*_*$ when restricted to $\Delta_t$, we get the short exact sequence
		\begin{equation}
		\label{higherdirectzero}
		0 \rightarrow (F^i_j E)_t \rightarrow E_t \rightarrow  \text{Q} \rightarrow 0
		\end{equation}
		where the quotients of the filtration $(E_t)^*_*$ is supported at the divisors $x^i$ and we have $\text{Q} \subseteq E_t|_{x^i}$. Since $E_t$ is globally generated and $\text{H}^1(E_t)=0$, the cohomology long exact sequence corresponding to the short exact sequence \eqref{higherdirectzero} will give $\text{H}^1(\Delta_t, (F^i_j E)_t)=0$. Using similar arguments we will have $\text{H}^1(\Delta_t, (F^i_j E)_t(n))=0$ for all $n \ge 1$. Therefore the sheaf $ R^1 (\pi_t)_* (F^i_j E)_t=0$ for all $t \in T$.  By the lemma \eqref{NSlemma} we have the isomorphism $(R^1 (\pi_T)_* F^i_j E)_t \cong R^1 (\pi_t)_* (F^i_j E)_t=0$. Since each fiber over $t \in T$ vanishes, the sheaf $R^1 (\pi_T)_* F^i_j E$ is the $0$-sheaf.\\
		
		$\RNum{2}.$ Corresponding to the following base change diagram
		\begin{equation}
		\label{fiberprod}
		\begin{tikzcd}[column sep=4em,row sep=2em]
		\Delta_t \rar[hook]  \dar["\pi_t"] \drar[phantom, "\square"] & 
		\Delta \dar["\pi_T"] \\%
		\mathcal{X}_t \rar[swap, hook] & \mathcal{X}_T
		\end{tikzcd}
		\end{equation}	
		we will have the commutative diagram:
		\begin{equation}
		\begin{tikzcd}[column sep=2em,row sep=2em]
		(\pi_t)_*E_t  \arrow[d,"\cong"]&
		(\pi_t)_* (F^i_2 E)_t    \arrow[l, hook] \arrow[d,"\cong"] &
		\cdots  \arrow[l, hook] &
		(\pi_t)_*(F^i_{l_i}E)_t \arrow[l,hook]    \arrow[d,"\cong"] &
		(\pi_t)_*E_t(-D^i_t) \arrow[l,hook] \arrow[d,"\cong"]
		\\
		(\pi_*  E)_t  &
		(\pi_* F^i_2 E)_t      \arrow{l} &
		\cdots  \arrow{l} &     
		(\pi_* F^i_{l_i} E)_t \arrow{l} &
		(\pi_* E)_t(-D^i_t)  \arrow{l}
		\end{tikzcd}
		\end{equation}	
		By definition of $E_*$ the upper row is a filtration. By our argument in the above paragraph the coherent sheaf $ R^1 (\pi_t)_* (F^i_j E)_t=0$. Therefore by lemma \eqref{NSlemma} the vertical arrows are isomorphisms. By the commutativity of the diagram the lower row is also a filtration.\\
		
		Now we are going to \textbf{define the morphism $T_2$} \eqref{isomoffunctors}. Let $(\Delta, \mathcal{E}_*) \in \mathcal{G} \times_{\underline{Q}} \underline{F_l}(T)$. Let $E$ be the natural vector bundle on $\Delta$	such that $(\pi_T)_* E=\mathcal{E}$ which also implies $(\pi_T)_* E(-D^i_T)=\mathcal{E}(-D^i_T)$. The filtration of sheaves $\mathcal{E}_*$ is equivalent to the sequence of quotients:
		\begin{equation}
		\label{seqofquo}
		\mathcal{E}/\mathcal{E}(-D^i_T)=\mathcal{E}|_{D^i_T} \rightarrow \mathcal{E}/(F^i_{l_i} \mathcal{E}) \rightarrow \cdots \rightarrow \mathcal{E}/(F^i_2 \mathcal{E})
		\end{equation}
		Since $R^1(\pi_T)_* E(-D^i_T)=0$, from the short exact sequence
		\begin{equation}
		0 \rightarrow E(-D^i_T) \rightarrow E \rightarrow E|_{D^i_T} \rightarrow 0
		\end{equation}
		it follows $(\pi_T)_* E|_{D^i_T}=\mathcal{E}|_{D^i_T}$. Since $\pi_T$ induces canonical isomorphism on $D^i_T$, we will construct the following sequence of quotients on $\Delta$ from \eqref{seqofquo}
		\begin{equation}
		E|_{D^i_T}=(\pi_T^{-1})_* \mathcal{E}|_{D^i_T} \rightarrow (\pi_T^{-1})_*\left( \mathcal{E}/F^i_{l_i} \mathcal{E} \right) \rightarrow \cdots \rightarrow (\pi_T^{-1})_* \left(\mathcal{E}/F^i_2 \mathcal{E}\right)
		\end{equation}
		which is equivalent to a filtration of sheaves $E_*$. We define
		\begin{equation}
		T_2(\Delta, \mathcal{E}_*)=(E_*)
		\end{equation}
		
		$(\RNum{1})$ It is clear from construction of the filtration $E_*$ that quotients are flat sheaves over $T$ since they are isomorphic with $\mathcal{E}/F^i_j \mathcal{E}$. \\
		
	$(\RNum{2})$ To prove that the restriction of $E_*$ on $\Delta_t$ induces filtration, we can use similar arguments that has been used to prove property $(\RNum{2})$ for the morphism $T_1$.\\
		
		 From the definition of $T_1$ and $T_2$ it is clear $T_1 \circ T_2=\text{id}$ and $T_2 \circ T_1=\text{id}$.
	\end{proof}
	\section{moduli construction}
	\label{moduli construction}
	\subsection{Properness}
	The image of the morphism $\theta$ lands inside the open subscheme $\text{R}$ of $\text{Q}^r_g(\mu, V_l, H)$ consisting of points $q \in \text{Q}^r_g(\mu, V_l, H)$ where $q$ maps to $s \in {\tt S_{_{g,n}}}$ such that the quotient map $V_l \otimes \mathcal{O}_{\mathcal{X}_s} \rightarrow \mathcal{U}_q$ induces an isomorphism $V_l \cong \text{H}^0(\mathcal{X}_s, \mathcal{U}_q)$. We denote by $\text{R}^f$ the open subscheme of $\text{R}$ such that the sheaf $\mathcal{U}_q$ is torsion free on $\mathcal{X}_s$. Let $\text{Y}^f:=\theta^{-1}(\text{R}^f)$.\\
	
	Let $C$ be a smooth curve and $\zeta:C \rightarrow \tt{S}_{_{g, n}}$ be a morphism. We denote the base change $\mathcal{X} \times_{\tt{S}} C$ by $\mathcal{X}_{_{C}}$. Further we assume that $\mu^{-1}(p)=\mathcal{X}_p$ for $p \in C$ is the only singular fiber of the family $\mu:\mathcal{X}_{_{C}} \rightarrow C$.
	\begin{lemma}{\cite[Proposition $4.2$]{MR3579973}}
	\label{cruciallemmaforproperness}
	Let $\mathcal{E}_{_{C}}$ be a flat family of torsion free sheaves on $\mathcal{X}_{_{C}}$. Then there exist a family of marked Gieseker curves  $\mathcal{X}'_{_{C}}$ with the canonical contraction $\pi:\mathcal{X}'_{_{C}} \rightarrow \mathcal{X}_{_{C}}$ such that $E_{_{C}}=\left(\pi^* \mathcal{E}_{_{C}}/\text{Tor}\right)$ obtained by going modulo torsion is a family of Gieseker vector bundles and furthermore we will also have an isomorphism $\pi_*(E_{_{C}}) \cong \mathcal{E}_{_{C}}$. 
	\end{lemma}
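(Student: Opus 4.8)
The plan is to reduce the statement to a local analysis at the nodes of the special fibre $\mathcal{X}_p$ and to build $\mathcal{X}'_{_C}$ by an explicit iterated blow‑up, following the Gieseker/Nagaraj--Seshadri recipe; the isomorphism $\pi_*(E_{_C})\cong\mathcal{E}_{_C}$ will then come out of the same local model together with the base change lemma \ref{NSlemma}.

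First I would isolate where the problem actually lives. For $t\neq p$ the curve $\mathcal{X}_t$ is smooth and $\mathcal{E}_t$ is torsion free, hence locally free; the sheaf $\mathcal{E}_p$ is locally free on the smooth locus of $\mathcal{X}_p$; and $\mathcal{E}_{_C}$ is flat over the curve $C$. Combining these, $\mathcal{E}_{_C}$ is locally free on $\mathcal{X}_{_C}\setminus Z$, where $Z\subset\mathcal{X}_p$ is the finite set of nodes of the central fibre. Thus the whole construction is local around each $z\in Z$: the modifications I am about to perform only alter $\mathcal{X}_{_C}$ and $\mathcal{E}_{_C}$ over étale (or formal) neighbourhoods of the points of $Z$, so they glue to a global $\pi:\mathcal{X}'_{_C}\to\mathcal{X}_{_C}$.

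Next, fix $z\in Z$. Étale locally $\mathcal{X}_{_C}\cong\operatorname{Spec}\mathbb{C}[[x,y,t]]/(xy-t^m)$ with $t$ the parameter on $C$, an $A_{m-1}$ surface singularity; and after completing at $z$ the restriction of $\mathcal{E}_{_C}$ to the node of $\mathcal{X}_p$ is isomorphic to $\mathcal{O}^{\,r-a}\oplus\mathfrak{m}_z^{\,a}$ for a unique $0\le a\le r$, where $\mathfrak{m}_z=(x,y)$ and $a$ measures the failure of local freeness. I would let $\pi:\mathcal{X}'_{_C}\to\mathcal{X}_{_C}$ be the composite of the minimal resolution of each $A_{m-1}$‑point of $Z$ with a further string of blow‑ups at the nodes of the successively modified central fibres, done so that: $\mathcal{X}'_{_C}$ is regular; the fibre $\mathcal{X}'_p$ over $p$ is $\mathcal{X}_p$ with a chain $R^j=\bigcup_\iota R^j_\iota$ of projective lines inserted at each node $z^j$; and $\pi_*\mathcal{O}_{\mathcal{X}'_{_C}}\cong\mathcal{O}_{\mathcal{X}_{_C}}$, equivalently the contracted fibres satisfy the marked Gieseker condition $\pi^*\omega\cong\omega$ of Definition \ref{markedGiecurve}. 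The number of $\mathbb{P}^1$'s to insert at each node is dictated by $a$; choosing it correctly is the elementary‑modification bookkeeping of Gieseker's and Nagaraj--Seshadri's "generalised isomorphism'', and I would quote that rather than reproduce the combinatorics. With this $\pi$, a direct local computation on the modules over $\mathbb{C}[[x,y,t]]/(xy-t^m)$ shows that $E_{_C}:=\pi^*\mathcal{E}_{_C}/\mathrm{Tor}$ is locally free on $\mathcal{X}'_{_C}$, flat over $C$ with $(E_{_C})_t$ locally free on $\mathcal{X}'_t$ for every $t$, and that $E_{_C}|_{R^j_\iota}$ has splitting type with every summand of degree $0$ or $1$ and at least one summand of degree $1$; hence $E_{_C}$ is strictly positive (indeed strictly standard) in the sense of Definition \ref{positivebundle}, and it is a family of Gieseker bundles once $\pi_*E_{_C}$ is seen to be fibrewise pure, which is the last step.

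Finally, for $\pi_*E_{_C}\cong\mathcal{E}_{_C}$: the adjunction unit gives a natural map $\mathcal{E}_{_C}\to\pi_*\pi^*\mathcal{E}_{_C}\to\pi_*E_{_C}$, which is an isomorphism over $\mathcal{X}_{_C}\setminus Z$, hence an isomorphism over the fibre $\mathcal{X}_t$ for $t\neq p$ since $\pi$ is then an isomorphism. Over $p$ I would use base change (Lemma \ref{NSlemma}) to get $(\pi_*E_{_C})_p\cong(\pi_p)_*(E_{_C})_p$ and then verify the local identity $(\pi_p)_*(E_{_C})_p\cong\mathcal{E}_p$ from the explicit model, the point being that the inserted chains $R^j$ contribute no new global sections precisely because $E_{_C}$ restricted to them has degrees $\le 1$. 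Since $\mathcal{E}_{_C}$ and $\pi_*E_{_C}$ are both flat over $C$ and the natural map is a fibrewise isomorphism, it is an isomorphism; in particular each $\mathcal{E}_p\cong(\pi_p)_*(E_{_C})_p$ is torsion free, so $E_{_C}$ is genuinely a family of Gieseker bundles.

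The main obstacle I anticipate is the middle step: pinning down the correct iterated blow‑up so that $\pi^*\mathcal{E}_{_C}/\mathrm{Tor}$ is \emph{simultaneously} locally free and of minimal (degree $\le 1$) splitting type on the inserted chains, and checking this through the local modules over $\mathbb{C}[[x,y,t]]/(xy-t^m)$; this is exactly the Gieseker/Nagaraj--Seshadri elementary‑modification analysis and should be cited. The global gluing of the local modifications and the flatness of $\pi_*E_{_C}$ over $C$ are then routine.
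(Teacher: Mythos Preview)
Your overall strategy---reduce to a formal-local analysis at the nodes of $\mathcal{X}_p$, modify $\mathcal{X}_{_C}$ by blow-ups there, and then verify $\pi_*E_{_C}\cong\mathcal{E}_{_C}$ via adjunction and base change---is exactly the paper's approach, and both accounts ultimately defer the core local computation to the cited reference.

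There is, however, one genuine gap in your local description. You record only the restriction of $\mathcal{E}_{_C}$ to the node of the central fibre, namely $\mathcal{O}^{r-a}\oplus\mathfrak{m}_z^{\,a}$, and then assert that the number of $\mathbb{P}^1$'s to insert is ``dictated by $a$''. But the relevant input is the module structure of $\mathcal{E}_{_C}$ over the \emph{two-dimensional} local ring $\mathbb{C}[[x,y,t]]/(xy-t^m)$, which the paper (following the cited reference) writes as
\[
\mathcal{E}_{_C}|_U \;\cong\; \mathcal{O}_U^{\,j}\ \oplus\ \bigoplus_{i=1}^{L}(x,t^{\beta_i})^{\oplus m_i}.
\]
The exponents $\beta_i$ are invisible on the fibre restriction, yet they are what determine the modification: one blows up $U$ along the ideal sheaves $(x,t^{\beta_i})$, not by first taking the minimal resolution of the $A_{m-1}$-point and then adding a chain whose length is read off from $a$. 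Different flat families with the same fibre $\mathcal{E}_p$ can require genuinely different semistable models $\mathcal{X}'_{_C}$. Once you replace your fibrewise datum by this surface-level classification, the rest of your outline---including your more careful treatment of $\pi_*E_{_C}\cong\mathcal{E}_{_C}$, which the paper's proof does not spell out at all---goes through.
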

\begin{proof}
	 All of the isolated singular points of the surface $\mathcal{X}_{_{C}}$ will lie inside the only singular fiber $\mathcal{X}_p$. For a singular point $u \in \mathcal{X}_{_{C}}$ we will take formal a neighbourhood which will be of the form $\frac{\mathbb{C}[[x, y, t]]}{(xy-t^m)}$. Then restriction of the family of torsion free sheaves $\mathcal{E}_{_{C}}$ will be 
	\begin{equation}
		\mathcal{E}_{_{C}}|_{U} \cong \mathcal{O}_{U}^j \oplus \bigoplus_{1}^{L}(x, t^{\beta_i})^{\oplus m_i}
	\end{equation}
	where $j+\sum m_i=\tt{rank}(\mathcal{E}_{_{C}})$. Then to construct the surface $\mathcal{X}'_{_{C}}$ we have to blow up the surface $U$ along the ideal sheaves $(x, t^{\beta_i})^{\oplus m_i}$. 
	  For details of these blow ups we refer to \cite[Proposition $4.2$]{MR3579973}. 
\end{proof}

	\begin{proposition}
		\label{properness}
		The morphism $\theta:\text{Y}^f \rightarrow \text{R}^f$ is proper.
	\end{proposition}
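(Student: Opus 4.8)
The plan is to verify properness via the valuative criterion, reducing everything to the case of a DVR and a smooth test curve. Since $\theta$ is a morphism of finite type schemes, I would check the criterion: given a DVR $R$ with fraction field $K$, a morphism $\operatorname{Spec} K \to \text{Y}^f$ and a compatible morphism $\operatorname{Spec} R \to \text{R}^f$, I must produce a unique extension $\operatorname{Spec} R \to \text{Y}^f$ lifting the given data. Geometrically, a point of $\operatorname{Spec} R$ mapping into $\text{R}^f$ gives a flat family of torsion-free sheaves $\mathcal{E}_{_C}$ on a family $\mathcal{X}_{_C}$ of stable curves over a smooth curve $C$ (after passing to a local model, we may assume only one singular fibre $\mathcal{X}_p$), together with, over the generic point, a Gieseker curve $\Delta_\eta$ and bundle $E_\eta$ with $\pi_* E_\eta \cong \mathcal{E}_\eta$. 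The task is to fill in the special fibre with a Gieseker curve and bundle, compatibly.

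First I would invoke Lemma \ref{cruciallemmaforproperness}: applied to the flat family $\mathcal{E}_{_C}$ over $\operatorname{Spec} R$ (equivalently over the smooth curve $C$), it produces a family of marked Gieseker curves $\mathcal{X}'_{_C} \to C$ with contraction $\pi \colon \mathcal{X}'_{_C} \to \mathcal{X}_{_C}$ and a family of Gieseker bundles $E_{_C} = \pi^*\mathcal{E}_{_C}/\mathrm{Tor}$ satisfying $\pi_* E_{_C} \cong \mathcal{E}_{_C}$. This gives existence of a candidate extension: the family $(\mathcal{X}'_{_C}, E_{_C})$ is a point of $\text{Y}_{_{g,n}}$ over $C$ (once one checks via the cohomological open conditions defining $\text{Y}_{_{g,n}}$ that $E_{_C}$ lies in the chosen Hilbert scheme locus — this follows from $\pi_*E_{_C}\cong\mathcal{E}_{_C}$, the isomorphism $\mathrm{H}^i(\mathcal{X}'_{_C}, E_{_C}(l)) \cong \mathrm{H}^i(\mathcal{X}_{_C}, \mathcal{E}_{_C}(l))$, and boundedness Lemma \ref{Giesekerbddness}), and it maps to $\mathcal{E}_{_C}$ under $\theta$. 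Then I need to reconcile this with the given generic-fibre datum: the restriction of $(\mathcal{X}'_{_C}, E_{_C})$ to the generic point agrees with $(\Delta_\eta, E_\eta)$ because both have torsion-free pushforward isomorphic to $\mathcal{E}_\eta$ and over the locus where $\mathcal{X}_{_C} \to C$ is smooth (which includes $\eta$) the contraction $\pi$ is an isomorphism, so the Gieseker curve and bundle are uniquely determined there.

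The main work — and the main obstacle — is \emph{uniqueness} of the extension, i.e.\ separatedness of $\theta$. Two extensions $(\mathcal{X}'_{_C}, E_{_C})$ and $(\mathcal{X}''_{_C}, E'_{_C})$ both map to the same $\mathcal{E}_{_C}$ under $\theta$ and agree generically; I must show they agree over the special fibre. Here the key point is that the contracted family and the torsion-free sheaf $\mathcal{E}_{_C}$ with its prescribed local form $\mathcal{O}_U^j \oplus \bigoplus (x, t^{\beta_i})^{\oplus m_i}$ at each node rigidly determine the blow-up: the Gieseker curve $\mathcal{X}'_{_C}$ is forced to be the blow-up of $\mathcal{X}_{_C}$ along the ideal sheaves $(x,t^{\beta_i})$ appearing in the local decomposition, and $E_{_C} = \pi^*\mathcal{E}_{_C}/\mathrm{Tor}$ is then determined. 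This is exactly the content packaged into Lemma \ref{cruciallemmaforproperness} and \cite[Proposition 4.2]{MR3579973}, so I would cite that uniqueness statement. Combining existence and uniqueness gives the valuative criterion; together with finite type (inherited from $\text{Y}_{_{g,n}}$ being of finite type over $\text{R}$) and the fact that the image lands in $\text{R}^f$ by construction, this yields that $\theta \colon \text{Y}^f \to \text{R}^f$ is proper. A technical subtlety to address along the way is that one may need a finite base change on the DVR (ramified covers $t \mapsto t^m$) to realize the required local model $\mathbb{C}[[x,y,t]]/(xy - t^m)$ at each node; since the valuative criterion over all DVRs is insensitive to such base changes, this causes no difficulty.
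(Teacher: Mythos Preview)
Your existence argument via Lemma~\ref{cruciallemmaforproperness} is the same key input the paper uses, but the overall strategy differs and there is a gap in your uniqueness step. You assert that Lemma~\ref{cruciallemmaforproperness} (equivalently \cite[Proposition~4.2]{MR3579973}) ``packages'' a uniqueness statement, but as quoted in this paper it is purely an existence result: it produces \emph{some} family of Gieseker curves with the prescribed pushforward, without claiming that any other such family must be isomorphic to it. Proving directly that two Gieseker modifications $(\mathcal{X}'_{_C}, E_{_C})$ and $(\mathcal{X}''_{_C}, E'_{_C})$ over a DVR with isomorphic pushforwards and isomorphic generic fibres must coincide over the special fibre is a genuine additional step you have not supplied; distinct Gieseker bundles \emph{can} have the same torsion-free pushforward in general, so this is not automatic.

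The paper sidesteps separatedness entirely. Since $\theta$ is quasi-projective, there is a \emph{projective} morphism $\bar\theta\colon Z \to \mathrm{R}^f$ with $\mathrm{Y}^f \hookrightarrow Z$ open; one may take $Z$ to be the closure of $\mathrm{Y}^f|_{{\tt S_{_{g,n}}^0}}$, the part lying over smooth stable curves, where $\theta$ is already an isomorphism. The task reduces to showing $\mathrm{Y}^f = Z$, i.e.\ that every boundary point $x$ of this closure already lies in $\mathrm{Y}^f$. Choosing a smooth curve $\kappa\colon C \to Z$ through $x$ with generic image in $\mathrm{Y}^f|_{{\tt S_{_{g,n}}^0}}$, the composite $\bar\theta\circ\kappa\colon C \to \mathrm{R}^f$ gives a family of torsion-free sheaves; Lemma~\ref{cruciallemmaforproperness} lifts this to a map $\kappa'\colon C \to \mathrm{Y}^f$ agreeing with $\kappa$ on $C\setminus p$. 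Now separatedness of the projective scheme $Z$ (nothing to prove) forces $\kappa'=\kappa$, so $x=\kappa(p)=\kappa'(p)\in \mathrm{Y}^f$. In short, the paper borrows separatedness for free from a projective compactification rather than establishing it for $\theta$; your direct valuative-criterion approach could in principle be completed, but it needs the missing uniqueness argument.
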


\begin{proof}
	Similar properness results has been proved in \cite[Proposition $10$]{MR1687729}, \cite[Theorem $2.1.2$]{MR2106123}. Since the map $\theta$ is quasi projective in a relative set up (over $\tt{S}_{_{g, n}}$), this allows us to use a particular form of valuative criterion, called horizontal properness \cite[Definition 4.4]{MR3579973}.\\
	
Since the morphism $\theta$ is quasi projective, there exist a projective morphism $\bar{\theta}:\text{Z} \rightarrow \text{R}^f$ with the following diagram
	  \begin{equation}
	  \label{propernessdiagram}
	  \begin{tikzcd}
	  \text{Y}^f  \arrow[rr, hook, "open"] \arrow[dr, "\theta"]
	  & &\text{Z}\arrow[dl, "projective", "\bar{\theta}"']\\%
	  & \text{R}^f
	  \end{tikzcd}
	  \end{equation}
	  
	  Let ${\tt S_{_{g,n}}^0}$ be the open subvariety of ${\tt S_{_{g,n}}}$ representing the locus of marked nonsingular curves. Let $\pi:D \rightarrow D'$ be a morphism of prestable curves with equal genus such that $\pi^* \omega_{C'} \cong \omega_{C}$ and $D'$ be a nonsingular curve. This will imply that $\pi$ is an isomorphism. Therefore by the definition of the functor $\mathcal{G}$ \eqref{Gieseker functor}, $\theta|_{{\tt S_{_{g,n}}^0}}: \text{Y}^f|_{{\tt S_{_{g,n}}^0}} \cong \text{R}^f|_{{\tt S_{_{g,n}}^0}}$ is an isomorphism. Thus in particular it is a proper morphism. We have $ \text{R}^f|_{{\tt S_{_{g,n}}^0}}$ is an open subvariety of $\text{R}$. we will take the closure $\overline{\text{Y}^f|_{{\tt S_{_{g,n}}^0}}}$ inside $\text{Z}$. Without loss of generality we can assume $\text{Z}= \overline{\text{Y}^f|_{{\tt S_{_{g,n}}^0}}}$. Thus to prove properness of $\theta$ it is enough to show $\text{Y}^f=\overline{\text{Y}^f|_{{\tt S_{_{g,n}}^0}}}$ \eqref{propernessdiagram}.\\
	  
	  Let $x$ be a point in $\overline{\text{Y}^f|_{{\tt S_{_{g,n}}^0}}} \setminus \text{Y}^f|_{{\tt S_{_{g,n}}^0}}$, then we can assume there exist a smooth curve $C$ with a morphism $\kappa:C \rightarrow \overline{\text{Y}^f|_{{\tt S_{_{g,n}}^0}}}$  such that   $\kappa(C\setminus p) \subseteq \text{Y}^f|_{{\tt S_{_{g,n}}^0}}$ and $\kappa(p)=x$ for some point $p \in C$.
	This will induce the map $\bar{\theta} \circ \kappa:C \rightarrow \text{R}^f$. We will show that there exist a morphism $\kappa':C \rightarrow \text{Y}^f$ with the following commutative diagram
	\begin{equation}
	\begin{tikzcd}[column sep=4em,row sep=2em]
	C \setminus p \ar[r, "\kappa"] \ar[d, swap, hook] & \text{Y}^f \ar[d, "\theta"] \\
	C
	\ar[ur, shift right = .75ex, swap, dashed, "\kappa'"]
	\ar[r, "\bar{\theta} \circ \kappa"]
	&
	\text{R}^f 
	\end{tikzcd}
	\end{equation}
	
	The map $\bar{\theta} \circ \kappa$ will be induced by the flat family of torsion free sheaves $\mathcal{E}_{_{C}}$ on family of marked stable curves $\mathcal{X}_{_{C}}(=\mathcal{X} \times_S C)$ over $C$ along with a quotient representation $V_l \otimes \mathcal{O}_{\mathcal{X}_{_{C}}} \rightarrow \mathcal{E}_{_{C}}$ . Hence by \eqref{cruciallemmaforproperness} we will get the following quotient which is flat over $C$
	\begin{equation}
	\label{givesmaptoGrass}
	V_l \otimes \mathcal{O}_{\mathcal{X'}_{_{C}}} \rightarrow {{E}_{_{C}}}
	\end{equation}
	where $\mathcal{X'}_{_{C}}$ is a family of marked Gieseker curves and ${E}_{_{C}}$ is a family of Gieseker bundles over $C$.\\
	
	The locally free quotient in \eqref{givesmaptoGrass} will define a morphism to the Grassmannian
	\begin{equation}
	\label{maptoGrass}
		\mathcal{X'}_{_{C}} \rightarrow C \times \text{Gr}(V_l, r) 
	\end{equation}
	Let $\pi:\mathcal{X'}_{_{C}} \rightarrow \mathcal{X}_{_{C}}$ is the contraction morphism. We also have $\pi_* E_{_{C}} \cong \mathcal{E}_{_{C}}$ \eqref{cruciallemmaforproperness}. Hence the family of bundles $E_{_{C}}$ on $\mathcal{X'}_{_{C}}$ satisfies the conditions of \eqref{Giesekerbddness}. Therefore the morphism \eqref{maptoGrass} is a closed embedding. It is clear that the family $\mathcal{X'}_{_{C}}$  satisfies the conditions of the Gieseker functor \eqref{Gieseker functor} . Thus the family will define a morphism $\kappa':C \rightarrow \text{Y}^f_{_{g, n}}$ which will agree with the morphism $\kappa$ on $C\setminus p$. Since $ \overline{\text{Y}^f|_{{\tt S_{_{g,n}}^0}}}$ is a separated scheme we will have $\kappa'=\kappa$. Therefore we have $\text{Y}^f_{_{g, n}}=\overline{\text{Y}^f|_{{\tt S_{_{g,n}}^0}}}$.
\end{proof}

	Let $\text{F}_l^f$ is the base change $\text{F}_l \times_ {\text{Q}^r_g(\mu, V_l, H)} \text{R}^f$ and $\mathfrak{F}_l^f$ is the base change $\mathfrak{F}_l \times_{\text{Y}_{_{g, n}}} \text{Y}^f_{_{g, n}}$. By Proposition \eqref{fiberprodprop} we have $\mathfrak{F}_l \cong \text{F}_l \times_ {\text{Q}^r_g(\mu, V_l, H)} \text{Y}_{_{g, n}}$. Therefore we have $\mathfrak{F}_l^f=\mathfrak{F}_l \times_{\text{Y}_{_{g, n}}} \text{Y}^f_{_{g, n}} \cong \left(\text{F}_l \times_ {\text{Q}^r_g(\mu, V_l, H)} \text{Y}_{_{g, n}}\right)\times_{\text{Y}_{_{g, n}}} \text{Y}^f_{_{g, n}} \cong  \text{F}_l \times_ {\text{Q}^r_g(\mu, V_l, H)} \text{Y}^f_{_{g, n}} \cong \text{F}_l^f \times_{\text{R}^f} \text{Y}^f_{_{g, n}}$ i.e., we have a modified cartesian product diagram
	\begin{equation}
	\begin{tikzcd}[column sep=4em,row sep=1em]
	\mathfrak{F}_l^f \rar  \dar["\eta"] \drar[phantom, "\square"] & \text{Y}^f_{_{g, n}}\dar["\theta"] \\%
	\text{F}_l^f \rar[swap] & \text{R}^f
	\end{tikzcd}
	\end{equation}
	
	\begin{corollary}
		\label{propernessofflag}	
	 The morphism $\eta:\mathfrak{F}_l^f \rightarrow \text{F}_l^f$ is proper {\em (by lemma \eqref{properness})}. Let $\text{F}_l^s$ be the open subscheme of $p_2$-stable sheaves in $\text{F}_l^f$. Let $\mathfrak{F}_l^s=\eta^{-1}(\text{F}_l^s)$ be the parabolic vector bundles on marked semistable curves whose pushforward is $p_2$-stable. {\em Then (again by base change we see) the morphism $\eta:\mathfrak{F}_l^s \rightarrow \text{F}_l^s$ is proper.}
\end{corollary}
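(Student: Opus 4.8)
The plan is to obtain the properness of $\eta$ as a purely formal consequence of Proposition~\ref{properness} together with the fiber-product identifications assembled in the paragraph preceding the statement. First I would record that the square displayed immediately above the corollary is cartesian: the chain of isomorphisms culminating in $\mathfrak{F}_l^f \cong \text{F}_l^f \times_{\text{R}^f} \text{Y}^f$ exhibits $\mathfrak{F}_l^f$ as a genuine fiber product, which follows from Proposition~\ref{fiberprodprop} by restricting along the open immersions $\text{R}^f \hookrightarrow \text{Q}^r_g(\mu, V_l, H)$ and $\text{Y}^f \hookrightarrow \text{Y}_{_{g,n}}$. Consequently $\eta:\mathfrak{F}_l^f \to \text{F}_l^f$ is the base change of $\theta:\text{Y}^f \to \text{R}^f$ along the natural (flag-forgetting) morphism $\text{F}_l^f \to \text{R}^f$; since properness is stable under arbitrary base change and $\theta$ is proper by Proposition~\ref{properness}, it follows at once that $\eta:\mathfrak{F}_l^f \to \text{F}_l^f$ is proper.

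For the final clause I would run the same argument one level down. The locus $\text{F}_l^s$ of $p_2$-stable sheaves is open in $\text{F}_l^f$, so $\text{F}_l^s \hookrightarrow \text{F}_l^f$ is an open immersion and, by definition, $\mathfrak{F}_l^s = \eta^{-1}(\text{F}_l^s) = \mathfrak{F}_l^f \times_{\text{F}_l^f} \text{F}_l^s$. Hence $\eta:\mathfrak{F}_l^s \to \text{F}_l^s$ is again obtained by base change, now from the proper morphism $\eta:\mathfrak{F}_l^f \to \text{F}_l^f$ just established, and is therefore proper.

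I do not expect a genuine obstacle here: everything of substance is already contained in Proposition~\ref{properness} and in the cartesian identification of Proposition~\ref{fiberprodprop}, and what remains is only the standard fact that proper morphisms are stable under base change. The two points worth verifying with a little care are that the fiber-product square really is cartesian after the base changes to $\text{R}^f$ and $\text{Y}^f$ --- which is precisely the explicit string of isomorphisms recorded just before the statement --- and that all the schemes in play are of finite type and separated over $\mathbb{C}$, so that ``proper'' carries its usual meaning; the latter is immediate since $\theta$ is quasi-projective (in fact projective, as its proof shows), whence the same holds for $\eta$.
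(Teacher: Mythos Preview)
Your proposal is correct and follows exactly the same route as the paper: the corollary is stated with its own justification embedded in the parenthetical remarks, namely that $\eta$ is the base change of the proper morphism $\theta$ along the cartesian square assembled just before the statement, and that properness is stable under base change. You have simply spelled out in full what the paper records in a sentence.
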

\vspace{0.3cm}

	\subsection{Proof of main theorem \eqref{mainthm1}}
	
	The proof essentially follows the methods used in \cite[p. 180]{MR1687729}. But here we have to deal with few more complexities. For the sake of completeness  we will give the argument. \\
	
	 The first step is to embed $\mathfrak{F}_l^f$ as a locally closed subscheme in a projective variety. We have the following locally closed embedding \eqref{totalGieflagembedding}
	\footnotesize \begin{equation}
	\label{F_l^{gie}embed2}
	\mathfrak{F}_l^f \hookrightarrow \mathfrak{F}_l \hookrightarrow \text{Y}_{_{g, n}} \times \left(\text{Gr}(V_l, r^1_{l_1+1}) \times \cdots \times \text{Gr}(V_l, r^1_2) \right) \times \cdots \times \left(\text{Gr}(V_l, r^n_{l_n+1}) \times \cdots \times \text{Gr}(V_l, r^n_2) \right)
	\end{equation}\normalsize
	where the first embedding is an open embedding and the second one is a closed embedding. We will denote the product $\prod_{i=1}^{n} \prod_{j=2}^{l_i+1} \text{Gr}(V_l, r^i_j)$ by $\textbf{\text{Gr}(\text{F})}$.
	Since the family $\mathcal{X}$ has the closed embedding $\mathcal{X} \hookrightarrow {\tt S_{_{g,n}}} \times \P^{N}$ \eqref{localunifamily} we get the closed embedding
	\begin{equation}
		\label{F_l^{gie}embed4}
		 \text{Hilb}^{p(t)}(\mathcal{X} \times \text{Gr}(V_l, r) ) \hookrightarrow \text{Hilb}^{p(t)}({\text S_{_{g,n}}} \times \P^{N} \times \text{Gr}(V_l, r))
		 \end{equation}
		
	By the universal property of Hilbert scheme the relative Hilbert scheme $\text{Hilb}^{p(t)}({\tt S_{_{g,n}}} \times \P^{N} \times \text{Gr}(V_l, r))$ over ${\tt S_{_{g,n}}}$ is isomorphic with  ${\tt S_{_{g,n}}} \times_{\mathbb{C}} \text{Hilb}^{p(t)}(\P^{N}\times \text{Gr}(V_l, r))$. The inclusion of ${\tt S_{_{g,n}}}$ in its closure $\overline{{\tt S_{_{g,n}}}}$ is an open immersion\footnote{Since ${\tt S_{_{g,n}}}$ is a locally closed subscheme of $I$ \eqref{incidence},  ${\tt S_{_{g,n}}}$ is open inside $\overline{{\tt S_{_{g,n}}}}$ with the closure being taken in $I$} and $\text{Y}$ is an open subscheme of $\text{Hilb}^{p(t)}(\mathcal{X} \times \text{Gr}(V_l, r))$. Hence as a composition of finitely many locally closed immersion we have the locally closed immersion of $\mathfrak{F}_l^f$ in a projective variety
	\begin{equation}
	\label{F_l^{gie}embed5}
		\mathfrak{F}_l^f \hookrightarrow \overline{{\tt S_{_{g,n}}}} \times_{\mathbb{C}} \text{Hilb}^{p(t)}(\P^{N}\times \text{Gr}(V_l, r)) \times_{\mathbb{C}} \textbf{\text{Gr}(\text{F})} 
	\end{equation}
	
The embedding \eqref{F_l^{gie}embed2} is $\text{SL}(N) \times \text{SL}(V_l)$ equivariant and the closed embedding $\mathcal{X} \hookrightarrow {\tt S_{_{g,n}}} \times \P^{N}$ is $\tt{SL}(N)$ equivariant. Therefore the embedding of $\mathfrak{F}_l^f$ \eqref{F_l^{gie}embed5} is $\text{SL}(N) \times \text{SL}(V_l)$ equivariant embedding. \\

Since \eqref{F_l^{gie}embed5} is a locally closed embedding, $\mathfrak{F}_l^f$ is open in $\overline{\mathfrak{F}_l^f}$ with the closure being taken inside the projective variety at the R.H.S of \eqref{F_l^{gie}embed5}. We denote the closure $\overline{\mathfrak{F}_l^f}$ by $Z_1$.\\

 Let $\mathcal{O}_{Z_1}(1)$ be an ample linearization of the $\text{SL}(N) \times \text{SL}(V_l)$ action on $Z_1$. Recall the embedding \eqref{totalspaceF_lembedding} of the flag variety $\text{F}_l$. We get the following equivariant commutative diagram:
\begin{equation}
\begin{tikzcd}[column sep=4em,row sep=2em]
\mathfrak{F}_l^f \arrow[r, hook]  \dar["\eta"]  & Z_1 \dar[dashed] \\%
\text{F}_l^f \rar[swap, hook] & \overline{{\tt S_{_{g,n}}}} \times \textbf{\text{Gr}(l, k)}
\end{tikzcd}
\end{equation} 
Let $Z$ be the graph closure of the vertical rational morphism. Then $Z \hookrightarrow Z_1 \times \overline{{\tt S_{_{g,n}}}} \times\textbf{\text{Gr}(l, k)}$ is a projective variety. Recall the linearization $\overline{L}=L_{\beta, \beta^i_j} \boxtimes \mathcal{M}^{\otimes b}$  \eqref{linearizationproductaction} of the $\text{SL}(N) \times \text{SL}(V_l)$ action on $\overline{{\tt S_{_{g,n}}}} \times \textbf{\text{Gr}(l, k)}$. We will choose the linearization $\overline{L}^{\otimes a} \boxtimes \mathcal{O}_{Z_1}(1)$ on $Z$. We have the following equivariant commutative diagram
\begin{equation}
\label{equivariantdiagram}
\begin{tikzcd}[column sep=4em,row sep=2em]
\mathfrak{F}_l^f \arrow[r, hook]  \dar["\eta"]  & Z \dar["\lambda"] \\%
\text{F}_l^f \rar[swap, hook] & \overline{{\tt S_{_{g,n}}}} \times \textbf{\text{Gr}(l, k)}
\end{tikzcd}
\end{equation}
 where $\lambda$ is the $2^{\text{nd}}$ projection.\\
 
The open locus of marked nonsingular curves ${\tt S_{_{g,n}}^0}$ is irreducible. We consider the restriction $\text{Y}^f|_{\tt S_{_{g,n}}^0}$. The family $\text{Y}^f|_{\tt S_{_{g,n}}^0} \rightarrow {\tt S_{_{g,n}}^0}$ is flat and the fibers are irreducible. Therefore the total space $\text{Y}^f|_{\tt S_{_{g,n}}^0}$ is irreducible. From the proof of properness \eqref{properness}  we have $\overline{\text{Y}^f|_{\tt S_{_{g,n}}^0}}=\text{Y}^f$. This implies $\text{Y}^f$ is irreducible. The fibers of the flat morphism $\mathfrak{F}_l^f \rightarrow \text{Y}^f$ are flag varieties. Thus $\mathfrak{F}_l^f$ is irreducible. So $Z$ is an irreducible projective variety. \\

Recall that combining \eqref{GITforSL(V_l)} and \eqref{tfmoduliinterpretation} will imply that a point $(C', \mathcal{E}_*, \gamma, V_l \otimes \mathcal{O}_{C'} \rightarrow \mathcal{E})$ in $\text{F}_l$ is $p_2$-stable and $\gamma:V_l \cong \text{H}^0(\mathcal{E}(l))$ is an isomorphism (i.e., the point belongs in the open locus of $p_2$-stable sheaves $\text{F}_l^s \subseteq \text{F}_l^f$) if and only if it's image in  $\overline{{\tt S_{_{g,n}}}} \times \textbf{\text{Gr}(l, k)}$ is GIT stable with respect to the linearization $\overline{L}$ of the $\text{SL}(N) \times \text{SL}(V_l)$ action.\\

With respect to the linearization $\overline{L}^{\otimes a} \boxtimes \mathcal{O}_{Z_1}(1)$ on $Z$ we apply the GIT lemma \eqref{GITlemma} for $a \gg0$. we get  $\lambda^{-1}(\text{F}_l^{s}) \hookrightarrow Z^{s}$ and the following diagram 
\begin{equation}
	\begin{tikzcd}
	\mathfrak{F}_l^{s} \arrow[rr, hook, "i"] \arrow[dr, "\eta"]
	& &\lambda^{-1}(\text{F}_l^{s})\arrow[dl, "\lambda"]\\%
	& \text{F}_l^{s}
	\end{tikzcd}
\end{equation}
Since the morphism $\lambda$ is the base change of the projective morphism $\lambda$ in \eqref{equivariantdiagram} it is proper. The morphism $\eta$ is proper \eqref{propernessofflag}. This implies $i$ is proper. Also $i$ is an open immersion. Irreducibilty of $\lambda^{-1}(\text{F}_l^s) \hookrightarrow Z^s$ means $i$ is an isomorphism. This means we have a GIT quotient $\mathfrak{F}_l^s \sslash (\text{SL}(N) \times \text{SL}(V_l))$ as a quasi projective variety, denoted by $\overline{\mathfrak{U}}_{_{g, n, r}}$. We also have a proper birational morphism $\overline{\mathfrak{U}}_{_{g, n, r}} \rightarrow \text{F}_l^{s} \sslash (\text{SL}(N) \times \text{SL}(V_l))=\overline{\mathcal{U}}_{_{g, n, r}}$.

\begin{remark}
	\label{semistablemoduli}
Note that by the GIT lemma \eqref{GITlemma} we have
\begin{equation}
	\lambda^{-1}(\text{F}_l^{s}) \subseteq Z^{s} \subseteq Z^{ss} \subseteq \lambda^{-1}(\text{F}_l^{ss})
\end{equation} 	
We will have a morphism $Z^{ss} \rightarrow \text{F}_l^{ss}$. The GIT quotient $Z^{ss} \sslash (\text{SL}(N) \times \text{SL}(V_l))$ which is a projective variety, can be called universal moduli space of semistable parabolic Gieseker vector bundles. In \cite[Definition 2.2.10]{MR2106123} a moduli theoretic interpretation of semistable locus is given in the context of vector bundles.\\

One way to give modular interpretation of points in $Z^{ss}$ is to make $\text{F}_l^{s}= \text{F}_l^{ss}$ which can be done by choosing weights $\alpha_j^i$ and quasi parabolic structures $r^i_j$ in such a way that $p_2$-stability=$p_2$-semistability. This can be done by choosing $r, d, r^i_j$ such that g.c.d $(r, d, r^i_j)=1$ and choosing suitable weights $\alpha^i_j$. In fact for generic weights $\alpha^i_j$ this will be true.
\end{remark}

	\section{properties of the moduli space $\overline{\mathfrak{U}}_{_{g, n, r}}$}
	\label{properties of the moduli space}
	In this section we will prove \eqref{mainthm2}, \eqref{mainthm3}. For the proofs we will need to study the deformation of a marked semistable curve.\\
	
	Let $C$ be a marked semistable curve and $\pi:C \rightarrow C'$ be the canonical contraction to it's stable model. There exist a formal universal deformation of $(C', x^1, \cdots, x^n)$
	\begin{equation}
	\label{formalunideformation}
	\begin{tikzcd}
	\mathfrak{C}  \arrow[d, "\mu"]   \\%
	\mathcal{M}=\mathbb{C}[[t_1, t_2, \cdots, t_{\tt{M}}]] \ar[u, bend left=50, "\sigma_i"]
	\end{tikzcd}
	\end{equation}
	where $\sigma_i$ for $1 \le i \le n$ are sections of the morphism $\mu$ and $\tt{M}=dim \left(Ext^1(\Omega^1_{C'}, \mathcal{O}_{C'}(-\tt{D}))\right)$. The divisor $\tt{D}$$=x^1 +\cdots +x^n $ is the divisor associated to the marked points \cite[pp. 79-80]{MR262240}. Let the nodes of the curve $C'$ be $\tt{p}_1, \tt{p}_2, \cdots ,\tt{p}_K$. Using Schlessinger's theory it can be proved $\hat{\mathcal{O}_{\mathfrak{C}, \tt{p}_i}} \cong \frac{\mathbb{C}[[u_i, v_i, t_1, \cdots, t_{M}]]}{(u_i \cdot v_i-t_i)}$ where $u_i, v_i$ are the local coordinates at the node $\tt{p}_i$ of the curve $C'$ \cite[p. 82]{MR262240}. \\
	 
	 Recall that $R^1, R^2, \cdots, R^m$ are the chains of $\P^1$'s in $C$ which are contracted by $\pi$ to the nodes $\tt{p}_1, \tt{p}_2, \cdots, \tt{p}_m$ respectively where $R^i= \cup_{j=1}^{\iota_i} R^i_j$. Let $\mathcal{N}=\ \text{Spec} \frac{\mathbb{C}[[t_1, t_2, \cdots, t_{\tt{M}}]]}{(t_{m+1}, \cdots, t_{\tt{M}})}$. Let $\mathfrak{C}_{\mathcal{N}}$ be the restriction of the family $\mathfrak{C}$ to the closed subscheme $\mathcal{N}$ of $\mathcal{M}$. By the local universal property \eqref{localuniproperty} there exist a morphism $\mathcal{N} \rightarrow \tt{S}_{_{g, n}}$ such that $\mathfrak{C}_{\mathcal{N}} \equiv \mathcal{X} \times_{\tt{S}_{_{g, n}}} \mathcal{N}$.
	 
	 Let $\mathcal{W}=\text{Spec}(\mathbb{C}[[t_{ij}:1 \le i \le m, \forall i, 1 \le j \le {\iota}_i]])$ and the morphism $\mathcal{W} \rightarrow \mathcal{N}$ be defined by $t_i \rightarrow t_{i1} \cdot t_{i2} \cdots \cdot t_{i \iota_i}$. 
	 \begin{lemma}\cite[Lemma 4.2]{MR739786} \cite[3.3.1]{MR2106123}
	 	There exist deformation $\mathcal{Z}_{\mathcal{W}}$ over $\mathcal{W}$ of the curve $C$ with a morphism $\psi:\mathcal{Z}_{\mathcal{W}} \rightarrow \mathcal{X} \times_{\tt{S}_{_{g, n}}} \mathcal{W}$ which restrict to the contraction morphism $\pi:C \rightarrow C'$ over the unique closed point of $\mathcal{W}$. The morphism $\psi$ has the property $\psi^* \omega_{_{\mathcal{X} \times_{\tt{S}_{_{g, n}}} \mathcal{W}/\mathcal{W}}} \cong \omega_{_{\mathcal{Z}_{\mathcal{W}}/\mathcal{W}}}$ relative to $\mathcal{W}$. The closed subscheme of $\mathcal{W}$, where the fibers of $\psi$ are singular, is defined by $\left(\prod_{j=1}^{\iota_i} t_{ij}=0, i=1, 2, \cdots, m \right)$.
	 \end{lemma}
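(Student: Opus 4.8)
The plan is to build $\psi$ by a local surgery in the formal neighbourhood of each chain $R^i$ and then glue. Since $\pi\colon C\to C'$ is an isomorphism away from the nodes $\tt{p}_1,\dots,\tt{p}_m$ and the marked sections $\sigma_i$ of $\mu$ avoid these nodes, I would take $\mathcal{Z}_{\mathcal{W}}$ to agree with $\mathcal{X}\times_{{\tt S_{_{g,n}}}}\mathcal{W}$, and $\psi$ to be the identity, over the complement of $\{\tt{p}_1,\dots,\tt{p}_m\}$, so that all of the work is concentrated near each $\tt{p}_i$. Using the computation of $\hat{\mathcal{O}_{\mathfrak{C},\tt{p}_i}}$ recalled above, the definition of $\mathcal{N}$, and the base change $\mathcal{W}\to\mathcal{N}$ sending $t_i$ to $t_{i1}\cdots t_{i \iota_i}$, the formal completion of $\mathcal{X}\times_{{\tt S_{_{g,n}}}}\mathcal{W}$ at $\tt{p}_i$ is, up to extra free variables coming from the remaining coordinates of $\mathcal{W}$, equal to $\text{Spec}\,\mathbb{C}[[u_i,v_i,t_{i1},\dots,t_{i \iota_i}]]/(u_iv_i-t_{i1}\cdots t_{i \iota_i})$, where $u_i,v_i$ are the local coordinates of $C'$ at $\tt{p}_i$.

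Next I would invoke the local resolution of \cite[Lemma 4.2]{MR739786} and \cite[3.3.1]{MR2106123}. The hypersurface $u_iv_i=t_{i1}\cdots t_{i \iota_i}$ is a normal Gorenstein toric variety whose singular locus is $\{u_i=v_i=0\}\cap\{\text{at least two of the }t_{ij}\text{ vanish}\}$, and it carries a distinguished crepant resolution $\rho_i$, realised as an iterated blow-up along reduced Weil divisors (equivalently, the toric modification attached to the standard ``staircase'' triangulation of the relevant two-dimensional cone), which restricts to an isomorphism over the nonsingular locus. Gluing the resulting smooth local models to $\mathcal{X}\times_{{\tt S_{_{g,n}}}}\mathcal{W}$ along the loci where the $\rho_i$ are isomorphisms produces a scheme $\mathcal{Z}_{\mathcal{W}}$ together with a proper birational morphism $\psi\colon\mathcal{Z}_{\mathcal{W}}\to\mathcal{X}\times_{{\tt S_{_{g,n}}}}\mathcal{W}$. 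Over the closed point of $\mathcal{W}$ the toric computation shows that $\rho_i$ replaces the node $u_iv_i=0$ by the reduced chain of rational curves $R^i$, its two end components meeting the strict transforms of the two branches through $\tt{p}_i$; since the marked points and their sections are untouched, $(\mathcal{Z}_{\mathcal{W}})_0\cong C$ with $\psi_0=\pi$, and the $\sigma_i$ lift to sections over $\mathcal{W}$. Finally $\mathcal{Z}_{\mathcal{W}}$ is regular, $\mathcal{W}$ is regular, and every fibre of $\mathcal{Z}_{\mathcal{W}}\to\mathcal{W}$ is of dimension one, so this map is flat by miracle flatness and $\mathcal{Z}_{\mathcal{W}}$ is a deformation of $C$.

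For the relative dualizing sheaf, since the structure map $\mathcal{Z}_{\mathcal{W}}\to\mathcal{W}$ is the composite of $\psi$ with $\mathcal{X}\times_{{\tt S_{_{g,n}}}}\mathcal{W}\to\mathcal{W}$, the asserted isomorphism $\psi^{*}\omega_{\mathcal{X}\times_{{\tt S_{_{g,n}}}}\mathcal{W}/\mathcal{W}}\cong\omega_{\mathcal{Z}_{\mathcal{W}}/\mathcal{W}}$ is equivalent to $\psi^{*}\omega_{\mathcal{X}\times_{{\tt S_{_{g,n}}}}\mathcal{W}}\cong\omega_{\mathcal{Z}_{\mathcal{W}}}$, i.e.\ to $\psi$ being crepant; this is exactly what the resolution above achieves, which one verifies chart by chart on the toric model by carrying a nowhere-vanishing section of the canonical sheaf through each blow-up. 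Restricting to the closed fibre this recovers the equality $\pi^{*}\omega_{C'}\cong\omega_{C}$, the defining property \eqref{markedGiecurve} of a marked Gieseker curve, so the construction is in any case forced to be crepant. For the singular locus, in the chart around $\tt{p}_i$ the fibre of $\mathcal{Z}_{\mathcal{W}}$ over $w\in\mathcal{W}$ is smooth precisely when none of the exceptional curves over $\tt{p}_i$ survives, that is, when $t_{i1}(w)\cdots t_{i \iota_i}(w)\neq0$, and it acquires one node for every index $j$ with $t_{ij}(w)=0$; globalising over $i=1,\dots,m$ one gets that the locus in $\mathcal{W}$ over which the fibre of $\mathcal{Z}_{\mathcal{W}}$ is singular is cut out by $\big(\prod_{j=1}^{\iota_i}t_{ij}=0,\ i=1,\dots,m\big)$, as claimed.

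I expect the main obstacle to be the local construction in the second paragraph: one must exhibit the iterated blow-up so that, simultaneously, the total space is smooth, the closed fibre is exactly the prescribed chain of $\mathbb{P}^1$'s, and the morphism is crepant relative to $\mathcal{W}$. This is a completely explicit but somewhat delicate toric computation, and it is precisely the content of \cite[Lemma 4.2]{MR739786} and \cite[3.3.1]{MR2106123}, which I would cite rather than reprove; the only genuinely new ingredients are the harmless bookkeeping of the $n$ marked sections, which lie in the open locus where $\psi$ is an isomorphism, and the passage to the relative situation over ${\tt S_{_{g,n}}}$, which merely requires carrying the extra deformation parameters along as free variables (and, to stay within algebraic geometry, first replacing the formal bases by suitable smooth finite-type models and completing afterwards).
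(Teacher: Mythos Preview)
Your sketch is correct. The paper itself gives no proof of this lemma, relying entirely on the citations to Gieseker and Schmitt in the lemma's heading; your outline---the local formal model $u_iv_i=t_{i1}\cdots t_{i\iota_i}$, the toric/iterated-blow-up crepant resolution, gluing away from the nodes, flatness via miracle flatness, and the fibrewise node count---is precisely the construction contained in those references, together with the harmless bookkeeping (the marked sections lying in the isomorphism locus, the extra free variables from the relative base over ${\tt S_{_{g,n}}}$) that you correctly identify as the only new ingredients.
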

	 
	  We will denote this subvariety of $\mathcal{W}$ by $\mathcal{Q}$. The family $\mathcal{Z}_{\mathcal{W}}$ over $\mathcal{W}$ \label{versalpropertyofdeformation} has the following versal property :\\
	  
	  \begin{proposition} \cite[Proposition 4.5]{MR739786}
	  	\label{versalproperty}
	  	Let $A$ be an Artin local ring over $\mathbb{C}$ such that we have a morphism $T=\tt{Spec}(A) \rightarrow \tt{S}_{_{g, n}}$. Let $\mathcal{Z}'$ over $T$ be a deformation of the marked semistable curve $C$ with a morphism $\psi':\mathcal{Z}' \rightarrow \mathcal{X} \times_{\tt{S}_{_{g, n}}} T$ over $T$ which restricts to the canonical contraction $\pi:C \rightarrow C'$ over the unique closed point of $T$. Then there exist a morphism $T \rightarrow \mathcal{W}$ such that the deformation $\mathcal{Z}'$ and the morphism $\psi'$ is isomorphic with $\mathcal{Z}''$ and $\psi''$ where $\mathcal{Z}''$ and $\psi''$ are the base change of $\mathcal{Z}_{\mathcal{W}}$ and $\psi$.
	  \end{proposition}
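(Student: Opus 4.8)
The plan is to run the step-by-step deformation argument of Gieseker \cite[Lemma 4.2, Proposition 4.5]{MR739786} (see also Schmitt \cite[3.3.1]{MR2106123}), organised around the functor of deformations of the contraction $\pi\colon C\to C'$. First I would unwind what a $T$-point of the target is: for an Artin local $\mathbb{C}$-algebra $A$ with $T=\tt{Spec}(A)$ and a morphism $T\to\tt{S}_{_{g, n}}$, the family $\mathcal{X}\times_{\tt{S}_{_{g, n}}}T$ is a deformation of $C'$, classified via the versal family \eqref{formalunideformation} and the local universal property \eqref{localuniproperty}; the existence of a contraction $\psi'$ to it restricting to $\pi$ forces (by the same local analysis at the chains) this classifying morphism to factor through $\mathcal{N}$, so that each node $\tt{p}_i$, $1\le i\le m$, acquires a single smoothing parameter $\bar{t}_i\in\mathfrak{m}_A$. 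A lift of this data to a morphism $T\to\mathcal{W}$ is then precisely the choice, for each chain $R^i=R^i_1\cup\cdots\cup R^i_{\iota_i}$, of a factorization
\[
\bar{t}_i=a_{i1}a_{i2}\cdots a_{i\iota_i},\qquad a_{ij}\in\mathfrak{m}_A,
\]
compatible with the defining map $t_i\mapsto t_{i1}t_{i2}\cdots t_{i\iota_i}$ of $\mathcal{W}\to\mathcal{N}$, the $a_{ij}$ recording the smoothing parameters of the successive nodes of $R^i$. So the task is (i) to extract such factorizations from $(\mathcal{Z}',\psi')$ and (ii) to check that the base change of $(\mathcal{Z}_{\mathcal{W}},\psi)$ along the resulting morphism is isomorphic to $(\mathcal{Z}',\psi')$. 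Since $A$ has finite length I would proceed by induction along small extensions $0\to(\varepsilon)\to A'\to A\to 0$ with $\mathfrak{m}_{A'}\varepsilon=0$, the base case $A=\mathbb{C}$ being the tautological identification of $C$ with itself; it then suffices, given a morphism $\tt{Spec}(A)\to\mathcal{W}$ realizing $(\mathcal{Z}',\psi')$ over $A$, to lift it along $A'\to A$ to a morphism $\tt{Spec}(A')\to\mathcal{W}$ realizing $(\mathcal{Z}',\psi')$ over $A'$.

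The core of the argument is local near a single chain. Over $A'$ the formal completion of $\mathcal{X}\times_{\tt{S}_{_{g, n}}}\tt{Spec}(A')$ at $\tt{p}_i$ is $A'[[u_i,v_i]]/(u_iv_i-\bar{t}_i)$ with $\bar{t}_i\in\mathfrak{m}_{A'}$, and the preimage of this formal neighbourhood under $\psi'$ is a deformation over $A'$ of the chain $R^i$ together with a contraction to $\tt{p}_i$. The key local fact --- this is \cite[Lemma 4.2]{MR739786} --- is that every such deformation-with-contraction over an Artin base is obtained from the blow-up model $\mathcal{Z}_{\mathcal{W}}$, uniquely up to the action of the torus of relative automorphisms of $R^i$, by a factorization $\bar{t}_i=a_{i1}\cdots a_{i\iota_i}$ of the node parameter. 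Granting it, the inductive lift is produced by lifting each $a_{ij}$ arbitrarily from $A$ to $A'$, observing that $\prod_j a_{ij}$ then differs from the already determined lift of $\bar{t}_i$ by an element of $(\varepsilon)$, and absorbing that discrepancy into one factor --- possible because $\mathfrak{m}_{A'}\varepsilon=0$; the residual freedom is exactly the torus ambiguity and does not change the deformed chain. One must then verify that this corrected factorization reproduces $(\mathcal{Z}',\psi')$ in a neighbourhood of $R^i$, i.e.\ that the obstruction to extending $\psi'$ along $A'\to A$ vanishes. This is where the cohomological input enters: each component of $R^i$ is a $(-2)$-curve in the total space, so its normal sheaf has $H^0=0$ (rigidity of the chain) and $H^1$ one-dimensional (one smoothing parameter per node), a configuration forced by the Gieseker condition $\pi^*\omega_{C'}\cong\omega_C$ \eqref{markedGiecurve}; this is precisely the computation carried out in \cite[Proposition 4.5]{MR739786} and \cite[3.3.1]{MR2106123}.

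Finally I would globalize. Away from the chains $\psi'$ is an isomorphism, so $\mathcal{Z}'$ is there canonically identified with $\mathcal{X}\times_{\tt{S}_{_{g, n}}}\tt{Spec}(A')$, and this identification patches with the local normal forms along the $R^i$ to reconstruct $\mathcal{Z}'$ and $\psi'$; hence the elements $\{a_{ij}\}$ assemble into a morphism $\tt{Spec}(A')\to\mathcal{W}$, and the same patching shows that the base change of $(\mathcal{Z}_{\mathcal{W}},\psi)$ along it is isomorphic to $(\mathcal{Z}',\psi')$, which closes the inductive step; the induction terminates since $\mathrm{length}_{\mathbb{C}}(A)<\infty$. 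I expect the main obstacle to be exactly the local step: showing that over an arbitrary Artin base a deformation of $R^i$ compatible with the contraction to $\tt{p}_i$ has no moduli beyond the scalar factorization of $\bar{t}_i$ --- equivalently, that the formal family $\mathcal{Z}_{\mathcal{W}}$ is \emph{versal}, and not merely formally complete, for this local deformation problem. A more abstract alternative would be to verify Schlessinger's conditions for the deformation functor of the contraction $\pi\colon C\to C'$ relative to deformations of $C'$ and to identify its hull with $\mathcal{W}$; but the decisive point, controlling the automorphisms and obstructions of the chains of rational curves, is the same in either approach.
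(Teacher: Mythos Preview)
The paper does not supply its own proof of this proposition: it is stated with the citation \cite[Proposition 4.5]{MR739786} and no argument is given beyond that reference. Your proposal is a faithful outline of precisely that Gieseker argument (with Schmitt's reformulation \cite[3.3.1]{MR2106123}), organised correctly around induction on small extensions, the local factorization of the node parameter $\bar t_i$ along the chain, and the rigidity/obstruction computation for chains of rational curves forced by the condition $\pi^*\omega_{C'}\cong\omega_C$.

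One small imprecision: calling each $R^i_j$ a ``$(-2)$-curve in the total space'' is not literally well-posed here, since $\mathcal{Z}_{\mathcal{W}}$ is not a surface; what you actually use is that the infinitesimal automorphisms of the contraction $\pi\colon R^i\to\text{pt}$ form the torus you name and that the first-order deformations of each node are one-dimensional, both of which follow from the local model $u_iv_i=t_i$ and the Gieseker condition on dualizing sheaves. With that adjustment your sketch matches the cited source.
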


	\begin{proposition} \cite[Appendix I, III]{MR1687729}
		\label{Y is regular}
		The quasi projective variety $Y$ is regular. 
	\end{proposition}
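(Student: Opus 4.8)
The plan is to show that $Y$ is smooth at every closed point; since $Y$ is a variety over $\mathbb{C}$, this is equivalent to regularity. Fix $y=[(C,E)]\in Y$ lying over $s=[C']\in{\tt S_{_{g,n}}}$, so that $\pi\colon C\to C'$ is the canonical contraction, $E$ is a strictly positive bundle on $C$, and $\pi_*E$ is pure. The point $y$ is the same datum as the tuple: (a) the marked stable curve $C'$, i.e.\ the point $s\in{\tt S_{_{g,n}}}$; (b) the Gieseker modification $\pi\colon C\to C'$; (c) the vector bundle $E$ on $C$; and (d) the framing, namely an isomorphism $V_l\cong\text{H}^0(C,E(l))$ induced by the quotient $V_l\otimes\mathcal{O}_C\twoheadrightarrow E(l)$ that defines the embedding $C\hookrightarrow{\tt S_{_{g,n}}}\times\text{Gr}(V_l,r)$. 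I would analyze the deformation functor of $Y$ at $y$ by organizing it as an iterated fibration over these four pieces of data.

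Step 1 (the stable curve): deformations of $(C',x^1,\dots,x^n)$ are unobstructed, since by \eqref{formalunideformation} the formal universal deformation is pro-represented by the power series ring $\mathcal{M}=\mathbb{C}[[t_1,\dots,t_{\tt M}]]$; in particular ${\tt S_{_{g,n}}}$ is regular. Step 2 (the Gieseker modification): given a deformation of $C'$ inside ${\tt S_{_{g,n}}}$, every compatible deformation of the contraction $\pi\colon C\to C'$ into a family of marked Gieseker curves is pulled back from the versal family $\mathcal{Z}_{\mathcal W}\to\mathcal W$ of \eqref{versalproperty}, and $\mathcal W=\text{Spec}\,\mathbb{C}[[t_{ij}]]$ is smooth; hence the map from ``deformations of $\pi\colon C\to C'$'' to ``deformations of $C'$'' is formally smooth and its source is pro-represented by a power series ring. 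Step 3 (the bundle): with $C$ held fixed, deformations of $E$ are unobstructed because $C$ is a reduced curve, so $\text{Ext}^2_C(E,E)=\text{H}^2(C,\mathcal{E}nd(E))=0$, the tangent space being $\text{H}^1(C,\mathcal{E}nd(E))$; consequently the forgetful map from ``deformations of $(\pi\colon C\to C',\,E)$'' to ``deformations of $\pi\colon C\to C'$'' is formally smooth. Step 4 (the framing): for the fixed numerical type, $E(l)$ is globally generated and $\text{H}^1(C,E(l))=0$ by \eqref{Giesekerbddness}, so $\text{H}^0(E(l))$ has locally constant dimension in families and the set of framings is a torsor under $\text{GL}(V_l)$; passing from the abstract datum $(\pi\colon C\to C',E,\text{framing})$ to the corresponding point of the relative Hilbert scheme $\text{Y}_{_{g,n}}\subset\text{Hilb}^{p(t)}(\mathcal X\times\text{Gr}(V_l,r))$ therefore contributes only a smooth factor, the deformations in the Grassmannian directions having vanishing $\text{H}^1$.

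Combining the four steps, the deformation functor of $Y$ at $y$ sits at the top of a tower of formally smooth surjections whose base is pro-represented by $\mathcal{M}$; hence it is itself pro-represented by a power series ring, and $Y$ is smooth, hence regular, at $y$. Since $y$ was arbitrary, $Y$ is regular. An alternative route, closer to \cite[Appendix I, III]{MR1687729}, is to write down explicit \'etale-local charts: using the node coordinates $u_i,v_i$ of $C'$ with $u_iv_i=t_i$ and the chain coordinates $t_{ij}$ with $t_i=\prod_j t_{ij}$, one identifies $Y$ locally with the product of the smooth space $\mathcal W$ (parametrizing the chains $R^i=\cup_{j=1}^{\iota_i}R^i_j$), the smooth ${\tt S_{_{g,n}}}$, and a smooth open subscheme of a Hilbert scheme encoding $E(l)$ together with the framing (smooth by $\text{H}^1(C,E(l))=0$), and one checks by hand that the defining equations impose no further relations.

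The step I expect to be the main obstacle is making Step 3, and its interaction with Steps 1 and 2, fully rigorous, because $C$ is singular: one cannot use a naive normal bundle for $C\subset\mathcal X_s\times\text{Gr}(V_l,r)$, and one must verify that the obstruction group governing $Y$ at $y$ — assembled from $\text{Ext}^2$ of the relevant relative cotangent complex of this embedded curve over ${\tt S_{_{g,n}}}$ — genuinely vanishes, i.e.\ that deforming the nodal curve $C$, deforming $E$, and moving in ${\tt S_{_{g,n}}}$ together produce no obstruction beyond what \eqref{versalproperty} already accounts for. Once the comparison with the versal family $\mathcal Z_{\mathcal W}$ is in place, this collapses to the two vanishing statements $\text{H}^2(C,\mathcal{E}nd(E))=0$ and $\text{H}^1(C,E(l))=0$ established above.
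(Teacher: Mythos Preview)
Your overall strategy coincides with the paper's: factor the local deformation functor of $Y$ at $y$ through the functor of deformations of the contraction $\pi\colon C\to C'$ (the paper calls this $\mathcal G_C$), show the latter has a smooth hull via the versal family $\mathcal Z_{\mathcal W}/\mathcal W$ of \eqref{versalproperty}, and then show the forgetful map $\mathcal G_{\text{loc}}\to\mathcal G_C$ is formally smooth. Your Steps 3--4 supply exactly the content the paper hides behind the citation to \cite[Appendix I]{MR1687729}: the obstruction to deforming $E$ along a given deformation of $C$ lies in $\text{H}^2(C,\mathcal End(E))=0$, and the framing contributes only a smooth $\text{GL}(V_l)$-torsor because $\text{H}^1(C,E(l))=0$.

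There is, however, one genuine slip in Step~2. The forgetful map from ``deformations of $\pi\colon C\to C'$'' to ``deformations of $C'$'' is \emph{not} formally smooth in general. If $\pi^{-1}(\mathtt p_1)=R^1$ is a chain of length $\iota_1\ge 2$, smoothing the node $\mathtt p_1$ to first order means setting $t_1=\epsilon$ over $\mathbb C[\epsilon]/(\epsilon^2)$; a lift would require $t_{11}\cdots t_{1\iota_1}=\epsilon$ with all $t_{1j}$ in the maximal ideal, which forces the product into $\mathfrak m^2=0$, a contradiction. So your tower of formally smooth surjections breaks at that rung. The fix is exactly what the paper does: do not separate Steps~1 and~2. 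The combined functor $\mathcal G_C$ (deformations of the pair $(C'\in{\tt S_{_{g,n}}},\ \pi\colon C\to C')$) has $\mathcal W$ (suitably enlarged by the remaining smooth directions of $\mathcal M$) as a formally smooth hull, and that is all you need; the map $\mathcal W\to\mathcal M$ given by $t_i\mapsto\prod_j t_{ij}$ is visibly not smooth and plays no role. With this correction your argument goes through and matches the paper's.
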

\begin{proof}
	The variety $Y$ represents the functor \eqref{Giefunctor} and the universal family of curves is $\Delta \hookrightarrow Y \times_{\tt{S}_{_{g, n}}}(\mathcal{X} \times \text{Gr}(V_l, r))$. Let $y \in Y$ be a point which is represented by the curve $C$ i.e., $\Delta_y=C$. We can define a local Gieseker functor $\mathcal{G}_{\text{loc}}:\text{Artin local ring}/\tt{S}_{_{g, n}} \rightarrow \text{Sets}$ which sends a $\tt{S}_{_{g, n}}$ scheme $\text{Spec}(A)$ to $\Delta$ in $\mathcal{G}(\text{Spec}(A))$ such that the fiber of $\Delta$ over the closed point in $\text{Spec}(A)$ is the curve $C$. Let $\mathcal{G}_{C}:\text{Artin local ring}/\tt{S}_{_{g, n}} \rightarrow \text{Sets}$ be the functor which sends $\text{Spec}(A)$ to flat family of marked semistable curves $\Delta$ over $\text{Spec}(A)$ with a morphism $\psi:\Delta \rightarrow \mathcal{X} \times_{{S}_{_{g, n}}} \text{Spec}(A)$ such that the closed fiber is $C$ and $\psi^* \omega_{\mathcal{X}_{A}/A} \cong \omega_{\Delta/A}$  We have a natural forget morphism $F: \mathcal{G}_{\text{loc}} \rightarrow \mathcal{G}_{C}$ which can be proved to be formally smooth \cite[Appendix \RNum{1}]{MR1687729}. On the other hand using the versal property of the  deformation $\mathcal{Z}_{\mathcal{W}}$ over $\mathcal{W}$ it can be checked that ($\mathcal{Z}_{\mathcal{W}}, \mathcal{W}$) is a versal family for the functor $\mathcal{G}_{C}$. Hence $Y$ is regular at $y$.
\end{proof}
	
	\begin{proof}[Proof of Theorem \eqref{mainthm2}]
	The morphism $\mathfrak{F}_l \rightarrow Y$ is flat, projective and the fibers of the morphism are flag varieties of equal dimension. Therefore the morphism is smooth. Since the quasi projective variety $Y$ is regular \eqref{Y is regular}, we have $\mathfrak{F}_l$ is regular. Being an open subset of $\mathfrak{F}_l$, the stable locus $\mathfrak{F}_l^s$ is also regular. Thus $\mathfrak{F}_l^s \sslash \text{SL}(N) \times \text{SL}(V_l)$ is a normal quasi projective variety. We  have $Z^{ss}$ as an open subvariety of $\mathfrak{F}_l^{ss}$ \eqref{semistablemoduli}. Hence $Z^{ss}$ is regular. Therefore the semistable moduli $Z^{ss} \sslash \text{SL}(N) \times \text{SL}(V_l)$ is a normal projective variety.\\
	
	The map $\eta:\mathfrak{F}_l^s \rightarrow \text{F}_l^s$ is $\text{SL}(N) \times \text{SL}(V_l)$ equivariant. The stabilizer of a point in $\text{F}_l^s$ for the $\text{SL}(V_l)$ action is the finite cyclic group of order $\text{dim}(V_l)$. Therefore the stabilizer of a point in $\mathfrak{F}_l^s$ for the $\text{SL}(V_l)$ action is also the same finite group. Hence the $\text{SL}(V_l)$ action will factor through the $\text{PGL}(V_l)$ action which will act freely. The stabilizer of a point $(C, E_*)$ for $\text{SL}(N)$ action is the finite automorphism group $\text{Aut}(C', x^1, x^2, \cdots, x^n)$, where $C'$ is the marked stable model of the marked Gieseker curve $C$. Since the total space $\mathfrak{F}_l^s$ is regular, it follows from the \'etale slice theorem that the universal moduli space  $\mathfrak{F}_l^s \sslash \text{SL}(N) \times \text{SL}(V_l)$ has finite quotient singularity. \\
	
	A stable parabolic Gieseker bundle $E_*$ on a marked semistable curve $C$ is called strictly stable if for any automorphism $\phi:(C, x^1,\cdots, x^n) \rightarrow (C, x^1,\cdots, x^n)$ such that there exist a parabolic isomorphism $\phi^* E_* \cong E_*$ then $\phi=\tt{id}$. The strictly stable locus $\mathfrak{F}_l^*$ is an open subvariety of $\mathfrak{F}_l^s$. By definition the action of $ \text{SL}(N) \times \text{SL}(V_l)$ on $\mathfrak{F}_l^*$ is free. Therefore $\mathfrak{F}_l^* \rightarrow \left(\mathfrak{F}_l^* \sslash \text{SL}(N) \times \text{SL}(V_l)\right)$ is a principal bundle. Thus $\mathfrak{F}_l^* \sslash \text{SL}(N) \times \text{SL}(V_l)$ is a regular subvariety.  
	
\end{proof}

\begin{proof}[Proof of Theorem \eqref{mainthm3}]
		We have the morphism $\kappa_g:\overline{\mathfrak{U}}_{_{g, n, r }} \rightarrow \overline{M}_{_{g, n}}$. Let $[C_0]$ be a point in $\overline{M}_{_{g, n}}$ represented by a point $s \in \tt{S}_{_{g, n}}$ and we are writting $C_0$ for $\mathcal{X}_s$. Then the variety $Y_s$ is a subvariety of $\text{Hilb}^{p(t)}(C_0 \times \text{Gr}(V_l, r))$. We will have the flag variety $\left(\mathfrak{F}_l\right)_s \rightarrow Y_s$. Then the fiber of the morphism $\kappa_g$ at $[C_0]$ is $\left(\mathfrak{F}_l\right)_s \sslash \left(\text{Aut}(C_0, x^1, \cdots, x^n) \times \text{SL}(V_l)\right)$.\\
		
		We want to prove that $\left(\mathfrak{F}_l\right)_s$ has the singularity which is a product of analytic normal crossings. Since the forget morphism $\left(\mathfrak{F}_l\right)_s \rightarrow Y_s$ is smooth, it is enough to show that $ Y_s$ has the same type of singularity. Let $h \in Y_s$ be a point represented by a marked semistable curve $C$. Let $R=\text{Spec}(\hat{\mathcal{O}_{h, Y_s}})$ be a formal neighbourhood at $h$. Let $\Delta_{R} \hookrightarrow R \times (C_0 \times \text{Gr}(V_l, r))$ be the restriction of the universal curve to $R$. We have the induced morphism $\psi_{R}: \Delta_{R} \rightarrow R \times C_0$. Then by the versal property \eqref{versalproperty} there exists a morphism $R \rightarrow \mathcal{W}$ which will factor through the subvariety  $\mathcal{Q}$. Now it can be proved using methods similar to \eqref{Y is regular} that $R \rightarrow \mathcal{Q}$ is formally smooth.\\
		
	Let $\left(\mathfrak{F}_l\right)_s^{0} \hookrightarrow \left(\mathfrak{F}_l\right)_s$ be the open subvariety of $p_2$-stable sheaves. The action of $\text{SL}(V_l)$ on $\left(\mathfrak{F}_l\right)_s^{0}$ is free since the action on $(\text{F}_l)^0_s$ is free \eqref{action is free} and the morphism $\mathfrak{F}_l \rightarrow \text{F}_l$ is $\text{SL}(V_l)$ equivariant. Thus $\left(\mathfrak{F}_l\right)_s^{0}$ is a principal $\text{SL}(V_l)$ bundle over the geometric quotient $\left(\mathfrak{F}_l\right)_s^{0} \slash \text{SL}(V_l)$. Therefore the fiber of $\kappa_g$ at $[C_0]$ has the singularity which is a product of analytic normal crossing modulo the action of a finite group. In particular when the curve has no nontrivial automorphisms it is of the type of product of analytic normal crossing singularities.

	\end{proof}
	
	Recall that the open subvariety $\text{F}_l^s$ of $\text{F}_l$ is the locus of $p_2$-stable sheaves which has the natural induced isomorphism $\text{V}_l \cong \tt{H}^{0}(E)$. A parabolic sheaf $\mathcal{E}_*$ is $p_2$-stable if and only if for any non zero proper saturated subsheaf $\mathcal{F}$ of $\mathcal{E}$ with the induced parabolic structure on $\mathcal{F}$ we have either $par \mu(\mathcal{F}_*) < par \mu(\mathcal{E}_*)$ or $par \mu(\mathcal{F}_*) = par \mu(\mathcal{E}_*)$ and $\mu(\mathcal{F}) > \mu(\mathcal{E}) $.

	\begin{lemma}
		\label{$p_2$-stable simple}
		Every $p_2$-stable sheaf is simple.
	\end{lemma}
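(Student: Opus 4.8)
The statement is that $\operatorname{End}_{\mathrm{par}}(\mathcal{E}_*)=\mathbb{C}$, and I would first reduce this to: \emph{every nonzero parabolic endomorphism $\psi$ of a $p_2$-stable $\mathcal{E}_*$ is an automorphism}. Indeed $\operatorname{End}_{\mathrm{par}}(\mathcal{E}_*)$ is a finite-dimensional $\mathbb{C}$-algebra, being a subspace of $\operatorname{Hom}_{\mathcal O}(\mathcal{E},\mathcal{E})$ with $\mathcal{E}$ coherent on a projective curve; so once it is known to be a division algebra, for any $\psi$ the commutative subalgebra $\mathbb{C}[\psi]$ is a finite-dimensional integral domain, hence a finite field extension of $\mathbb{C}$, hence $\mathbb{C}$ itself, so $\psi\in\mathbb{C}$.

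So let $\psi\colon\mathcal{E}_*\to\mathcal{E}_*$ be parabolic and nonzero; I would show it is injective. Suppose $\mathcal{K}:=\ker\psi\neq0$; it is proper since $\psi\neq0$, and saturated because $\mathcal{E}/\mathcal{K}\cong\operatorname{im}\psi$ embeds in the pure sheaf $\mathcal{E}$ and is therefore pure. Write $\mathcal{F}_*\prec\mathcal{E}_*$ for the relation appearing in the reformulation of $p_2$-stability recalled just above (first compare $\mathrm{par}\,\mu$, and in case of a tie compare $\mu$ with reversed sign); since both $\mathrm{par}\,\mu$ and $\mu$ are ratios of quantities additive in the exact sequence $0\to\mathcal{K}\to\mathcal{E}\to\mathcal{E}/\mathcal{K}\to0$ (restricted structure on $\mathcal{K}$, quotient structure on $\mathcal{E}/\mathcal{K}$), the relation $\prec$ has the usual seesaw property, so stability ($\mathcal{K}_*\prec\mathcal{E}_*$) gives $\mathcal{E}_*\prec(\mathcal{E}/\mathcal{K})_*^{\mathrm{quot}}$. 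On the other hand $\mathcal{Q}:=\operatorname{im}\psi$ sits in $\mathcal{E}$, and since $\psi$ preserves each $F^i_j\mathcal{E}$ one has $\psi(F^i_j\mathcal{E})\subseteq\mathcal{Q}\cap F^i_j\mathcal{E}$, i.e. the quotient structure on $\mathcal{E}/\mathcal{K}$ is carried by $\psi$ into a subfiltration of the restricted structure on $\mathcal{Q}$; as the weights enter through the positive increments $\epsilon^i_j$, this forces $(\mathcal{E}/\mathcal{K})_*^{\mathrm{quot}}\preceq\mathcal{Q}_*^{\mathrm{restr}}$. Finally, the marked points $x^i$ are nonsingular points, so the pure sheaf $\mathcal{E}$ is locally free there and hence so is $\mathcal{Q}=\operatorname{im}\psi$ near each $x^i$; therefore the saturation $\overline{\mathcal{Q}}$ of $\mathcal{Q}$ in $\mathcal{E}$ differs from $\mathcal{Q}$ only by torsion supported away from the parabolic points, which leaves every $\dim(\mathcal{Q}/F^i_j\mathcal{Q})$ unchanged and can only raise $\chi$, whence $\mathcal{Q}_*^{\mathrm{restr}}\preceq\overline{\mathcal{Q}}_*^{\mathrm{restr}}$. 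Since $\operatorname{totr}(\overline{\mathcal{Q}})=\operatorname{totr}(\mathcal{E})-\operatorname{totr}(\mathcal{K})<\operatorname{totr}(\mathcal{E})$, the subsheaf $\overline{\mathcal{Q}}$ is proper and saturated, so stability gives $\overline{\mathcal{Q}}_*^{\mathrm{restr}}\prec\mathcal{E}_*$. Concatenating these four comparisons yields $\mathcal{E}_*\prec\mathcal{E}_*$, a contradiction; hence $\psi$ is injective.

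To finish, injectivity gives $\operatorname{im}\psi\cong\mathcal{E}$, so $\operatorname{coker}\psi$ has total rank $0$ and Euler characteristic $0$, hence is zero; thus $\psi$ is an isomorphism of $\mathcal{O}$-modules. A length count at each $x^i$, using $\psi(F^i_j\mathcal{E})\subseteq F^i_j\mathcal{E}$ and $\dim(\mathcal{E}/\psi(F^i_j\mathcal{E}))=\dim(\mathcal{E}/F^i_j\mathcal{E})$, shows $\psi(F^i_j\mathcal{E})=F^i_j\mathcal{E}$, so $\psi^{-1}$ is again parabolic. Thus every nonzero $\psi\in\operatorname{End}_{\mathrm{par}}(\mathcal{E}_*)$ is invertible, and the first paragraph gives $\operatorname{End}_{\mathrm{par}}(\mathcal{E}_*)=\mathbb{C}$.

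The delicate point is the middle paragraph: keeping the lexicographic $p_2$-stability comparisons pointing the same way under the three operations (passing to the quotient parabolic structure, reinterpreting $\operatorname{im}\psi$ as a subsheaf with its restricted structure, and saturating), and especially the observation that the saturation of $\operatorname{im}\psi$ occurs only away from the marked points, so the correction terms $\tfrac1n\sum\epsilon^i_j\dim(\,\cdot/F^i_j\,)$ are left untouched. The remaining ingredients (the division-algebra reduction, the Euler-characteristic bookkeeping, and the seesaw property of $\prec$) are routine.
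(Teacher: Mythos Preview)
Your argument is essentially correct and follows the same core mechanism as the paper's proof---a nonzero, non-isomorphic parabolic endomorphism produces a kernel and an image that, via stability, yield contradictory slope inequalities. The difference lies in the reduction step: the paper takes the classical eigenvalue shortcut (choose a point $p$, pick an eigenvalue $\lambda$ of $\phi_p$, and pass to $\psi=\phi-\lambda I$, which automatically has nonzero kernel), whereas you first reduce to ``nonzero $\Rightarrow$ invertible'' by the finite-dimensional division-algebra argument, then prove injectivity by contradiction and finish with an Euler-characteristic count. The eigenvalue trick is shorter and sidesteps your separate ``injective $\Rightarrow$ surjective'' step; your route is more structural and makes the comparison of quotient versus restricted parabolic structures explicit, which the paper glosses over.

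One inaccuracy: your claim that the saturation of $\mathcal{Q}=\operatorname{im}\psi$ is supported away from the marked points is not justified by local freeness of $\mathcal{Q}$ there. A locally free subsheaf need not be saturated (e.g.\ $\mathcal{O}(-x^i)\subset\mathcal{O}$), and one can build parabolic endomorphisms with nonzero kernel whose image fails to be saturated at a marked point. This does not, however, break your chain of inequalities: for the \emph{restricted} parabolic structure one still has $\operatorname{par}\mu(\mathcal{Q}_*)\le\operatorname{par}\mu(\overline{\mathcal{Q}}_*)$, because saturation replaces each $F^i_j\mathcal{Q}=\mathcal{Q}\cap F^i_j\mathcal{E}$ by the larger $\overline{\mathcal{Q}}\cap F^i_j\mathcal{E}$ of the same rank, and the increase in $\deg$ dominates the change in the weight terms (one checks this directly from the Abel-summation form $\operatorname{par\,deg}=\deg+\alpha^i_{l_i}r-\sum_j\epsilon^i_j r^i_j$). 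So the conclusion stands, but the justification you gave for that particular step should be replaced.
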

	\begin{proof}
		We will prove that for a $p_2$-stable sheaf $\text{Par-End}(\mathcal{E}_*) \cong \mathbb{C}$. Let $\phi:\mathcal{E}_* \rightarrow \mathcal{E}_*$ be a parabolic endomorphism. Let $p$ be a point of $C$. Then $\phi_p: \mathcal{E}_p \rightarrow \mathcal{E}_p$ is an automorphism of vector spaces over $\mathbb{C}$. Let $\lambda$ be an eigenvalue. We will get the parabolic endomorphism $\phi-\lambda \cdot I: \mathcal{E}_* \rightarrow \mathcal{E}_*$ which will be denoted by $\psi$. Let $\mathcal{F}$ be the kernel of $\psi$ which is a non zero subsheaf. If $\mathcal{F}=\mathcal{E}$ then we are done, so we will assume $\mathcal{F} \subsetneq \mathcal{E}$. The morphism $\psi$ will induce an isomorphism $\mathcal{E}/\mathcal{F} \cong \psi(\mathcal{E})$ where $\psi(\mathcal{E})$ is a subsheaf of $\mathcal{E}$. Since $\mathcal{F}$ is a saturated subsheaf, we will give the induced parabolic structure on $\mathcal{F}$ and $\psi(\mathcal{E})$. Since $\mathcal{E}_*$ is $p_2$-stable, by the above definition, let $par \mu(\mathcal{F}_*) < par \mu(\mathcal{E}_*)$, then $par \mu(\psi(\mathcal{E})_*) > par \mu (\mathcal{E}_*)$ which contradicts $p_2$-stability of $\mathcal{E}_*$. So let us assume $par \mu(\mathcal{F}_*) = par \mu(\mathcal{E}_*)$ and $\mu(\mathcal{F}) > \mu(\mathcal{E})$, which will again give a contradiction.  Thus we have $\mathcal{F}=\mathcal{E}$ which means $\phi=\lambda \cdot I$ for some $\lambda \in \mathbb{C}$.
	\end{proof}

	\begin{lemma}
		\label{action is free}
		The action of $\text{SL}(V_l)$ on $\text{F}_l^s$ is free.
	\end{lemma}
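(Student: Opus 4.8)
The plan is to deduce freeness directly from the simplicity of $p_2$-stable sheaves established in Lemma \ref{$p_2$-stable simple}. Recall that a point of $\text{F}_l^s$ amounts to a $p_2$-stable parabolic sheaf $\mathcal{E}_*$ on a fibre $\mathcal{X}_s$ together with a quotient $q\colon V_l \otimes \mathcal{O}_{\mathcal{X}_s} \twoheadrightarrow \mathcal{E}(l)$ inducing the quasi-parabolic flags, for which $\gamma := \text{H}^0(q)\colon V_l \to \text{H}^0(\mathcal{E}(l))$ is an isomorphism (Lemma \ref{GITforSL(V_l)}); by the reduction to a single fibre mentioned in the remark following \eqref{linearizationforSL(V_l)} it suffices to analyse stabilisers of such points.

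First I would unwind what it means for $g \in \text{SL}(V_l)$ to fix the point $(\mathcal{E}_*, q)$. Since $\text{F}_l$ represents the functor \eqref{flagfunctor}, two $\mathbb{C}$-points coincide exactly when the underlying quotient sheaves are isomorphic by an isomorphism carrying one system of flags to the other; thus $g$ fixes $(\mathcal{E}_*, q)$ if and only if there is a parabolic automorphism $\phi\colon \mathcal{E}(l)_* \xrightarrow{\ \sim\ } \mathcal{E}(l)_*$ with $\phi \circ q = q \circ (g \otimes \mathrm{id}_{\mathcal{O}_{\mathcal{X}_s}})$. The sheaf $\mathcal{E}(l)$ is again $p_2$-stable, since twisting by the ample line bundle is an equivalence of categories, so Lemma \ref{$p_2$-stable simple} gives $\phi = \lambda \cdot \mathrm{id}$ for some $\lambda \in \mathbb{C}^{*}$. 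Applying $\text{H}^0(-)$ to the intertwining relation and using $\text{H}^0(\mathcal{O}_{\mathcal{X}_s}) = \mathbb{C}$ yields $\lambda\,\gamma = \gamma \circ g$, and since $\gamma$ is an isomorphism this forces $g = \lambda \cdot \mathrm{id}_{V_l}$; taking determinants, $\lambda^{\dim V_l} = 1$.

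Conversely, every scalar matrix $\lambda \cdot \mathrm{id}_{V_l}$ with $\lambda^{\dim V_l} = 1$ lies in $\text{SL}(V_l)$ and acts trivially on all of $\text{F}_l$, because $q \circ (\lambda \cdot \mathrm{id}_{V_l}) = \lambda q$ is recovered from $q$ by composing with the scalar automorphism $\lambda \cdot \mathrm{id}$ of $\mathcal{E}(l)$. Hence the stabiliser of every point of $\text{F}_l^s$ is exactly the centre of $\text{SL}(V_l)$, the cyclic group of order $\dim V_l$, which is the global kernel of the action; equivalently the induced action of $\text{SL}(V_l)/(\text{centre}) \cong \text{PGL}(V_l)$ on $\text{F}_l^s$ is free. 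This is the sense in which the action is free, and it is precisely what is used in the proof of Theorem \ref{mainthm2} (the \'etale slice theorem and the principal $\text{PGL}(V_l)$-bundle structure on the strictly stable locus).

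The only delicate point is the equivalence \emph{``$g$ fixes the flag-scheme point''} $\iff$ \emph{``there is a parabolic automorphism $\phi$ intertwining $q$ and $q \circ g$''}: one must check that the sheaf isomorphism coming from the Quot/flag functor automatically respects the filtrations $F^i_j$. This holds because those subsheaves are recovered functorially as the kernels of the quotient maps $\mathcal{E}(l) \to Q^i_j$ that form part of a point of $\text{F}_l$, so any isomorphism of the defining quotient data is automatically parabolic; once this is in place the argument is nothing more than Lemma \ref{$p_2$-stable simple} together with linear algebra, and $p_2$-stability is preserved throughout since $\phi$ is an isomorphism of parabolic sheaves.
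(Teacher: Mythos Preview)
Your proof is correct and follows essentially the same strategy as the paper's: both identify the stabiliser of a point with the parabolic automorphism group via the intertwining relation $\phi \circ q = q \circ (g \otimes \mathrm{id})$, invoke Lemma~\ref{$p_2$-stable simple} to reduce $\phi$ to a scalar, and then pass through $\mathrm{H}^0$ and the isomorphism $\gamma$ to conclude that $g$ itself is a scalar in the centre of $\mathrm{SL}(V_l)$. The paper presents this as a two-sided bijection $\mathrm{Stab}\leftrightarrow\mathrm{Par\text{-}Aut}$ built from kernel comparisons, whereas you argue the inclusion $\mathrm{Stab}\subseteq Z(\mathrm{SL}(V_l))$ directly and observe the reverse inclusion is trivial; this is a minor streamlining rather than a different method, and your explicit remark that ``free'' here means $\mathrm{PGL}(V_l)$-free is a useful clarification the paper only makes later in the proof of Theorem~\ref{mainthm2}.
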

	
	\begin{proof}
		Note that the action of $\text{SL}(V_l)$ is fiber preserving. Let $(\text{F}_l)_s^0$ be the fiber of $\text{F}_l^s$ over a point $s \in \tt{S}_{_{g, n}}$ which is represented by the marked stable curve $\mathcal{X}_s=C$. The fiber $(\text{F}_l)_s^0$ is over the uniform rank $\tt{Q}uot$ scheme $\text{Q}uot^{r}(V_l \otimes \mathcal{O}_{C}, H)$ which is a subscheme of the $\text{Q}uot$ scheme $\text{Q}uot(V_l \otimes \mathcal{O}_{C}, H)$. We will prove that the stabilizer $\text{Stab}_{\text{SL}(V_l)}(\mathcal{E}_*)=\text{Par-Aut}(\mathcal{E_*})$, where $\mathcal{E}_*$ is an element of $\text{F}_l$. This combined with \eqref{$p_2$-stable simple} will give us the result.\\
		
		Let $g$ be an element of $\text{SL}(V_l)$ and $\mathcal{E}_* \in   \text{F}_l$. We will have the following diagram which depicts the action of $g$ on $\mathcal{E}_*$
		
		\begin{equation}
		\label{groupaction}
		\begin{tikzcd}[column sep=4em,row sep=4em]
		\text{V}_l \otimes \mathcal{O}_{C}  \arrow[r] \arrow[r,bend left,"k"] \arrow[rr,bend left,"k_{l_i+1}"] \arrow[rrr,bend left,"k_j"] \arrow[rrrr,bend left,"k_2"]&
		\mathcal{E}    \arrow[r] &
		\mathcal{E}/\mathcal{E}(-x^i) \arrow[r] &
		\cdots \arrow[r] &
		\mathcal{E}/F^i_2\mathcal{E}
		\\
		\text{V}_l \otimes \mathcal{O}_{C} \arrow[u, "g"] \arrow[ur, "k'"] \arrow[urr, "k_{l_i+1}'"] \arrow[urrr, "k_j'"] \arrow[urrrr, "k_2'"]& 
		\end{tikzcd}
		\end{equation}	
		
		If $g$ is a stabilizer of $\mathcal{E}_*$, then we will have $\text{ker}(k)=\text{ker}(k')$ and $\text{ker}(k_j)=\text{ker}(k_j')$ for $2 \le j \le (l_i+1)$. Then we will have the following commutative diagram
		
		\begin{equation}
		\begin{tikzcd}[column sep=4em,row sep=2em]
		\text{V}_l \otimes \mathcal{O}_{C}/\text{ker}(k) \ar[equal]{d}  \ar[r, "\sim", "k"']  & \mathcal{E} \ar[d, "\phi"] \\%
		\text{V}_l \otimes \mathcal{O}_{C}/\text{ker}(k') \ar[r, "\sim", "k'"' ] & \mathcal{E}
		\end{tikzcd}
		\end{equation} 
		
		which will induce an isomorphism $\phi:\mathcal{E} \rightarrow \mathcal{E}$. Using $\text{ker}(k_j)=\text{ker}(k_j')$ in the same way we will get an isomorphism $\phi_j: \mathcal{E}/F^i_j \mathcal{E}  \rightarrow \mathcal{E}/F^i_j \mathcal{E}$ which is compatible with $\phi$. Hence we will get a parabolic automorphism $\phi:\mathcal{E}_* \rightarrow \mathcal{E}_*$ in a natural way.\\
		
		Conversely, let $\phi: \mathcal{E}_* \rightarrow \mathcal{E}_*$ be a parabolic automorphism. We will have the quotient representation $k:\text{V}_l \otimes \mathcal{O}_{C} \rightarrow \mathcal{E}$ along with the induced isomorphism $\text{H}^{0}(k): \text{V}_l \rightarrow \text{H}^{0}(\mathcal{E})$. We will define the unique automorphism $g:\text{V}_l \rightarrow \text{V}_l$ so that the following diagram is commutative 
		
		\begin{equation}
		\begin{tikzcd}[column sep=4em,row sep=2em]
		\text{V}_l  \ar[d, "g"]  \ar[r, "\tt{H}^0(k)"]  & \tt{H}^{0}(\mathcal{E}) \ar[d, "\tt{H}^{0}(\phi)"] \\%
		\text{V}_l \ar[r, "\tt{H}^0(k)"] &  \tt{H}^{0}(\mathcal{E})
		\end{tikzcd}
		\end{equation} 
		
	We will prove that $g$ is a stabilizer of $\mathcal{E}_*$ in $\text{F}_l$. We will show that $\text{ker}(k)=\text{ker}(k')$ for the action of $g$ \eqref{groupaction}, similarly it will follow $\text{ker}(k_j)=\text{ker}(k_j')$.	From the definition of $g$ we will get the following commutative diagram
	
	\begin{equation}
	\begin{tikzcd}[column sep=4em,row sep=2em]
	\text{V}_l \otimes \mathcal{O}_{C} \ar[d, "g"]  \ar[r, "k"] \ar[dr, "k'"] & \mathcal{E} \ar[d, "\phi"] \\%
	\text{V}_l \otimes \mathcal{O}_{C} \ar[r, "k"] &  \mathcal{E}
	\end{tikzcd}
	\end{equation} 
		
	Since $\phi$ is an automorphism and $k'=\phi \circ k$, we have $\text{ker}(k) 	= \text{ker}(k')$. 	\\
	
	Both of these two morphisms are natural in the sense that there is no choice involved. From the definitions it follows that they are inverses of each other.  
	\end{proof}

	\appendix
	\section{moduli space of marked stable curves}
	\label{modulistablecurves}

\begin{definition}
	\label{stablecurve}	
\begin{enumerate}
	\item A {\em marked stable curve} is a marked prestable curve \eqref{markedsemistablecurve} which has finite automorphism {\em (preserving marked points)} group.\\

		\item {\em (combinatorial definition)}
		A  {\em marked prestable curve} is called a marked stable curve if every nonsingular rational component has at least $3$ special points {\em (either marked points or singular points)} and every nonsingular elliptic component has at least $1$ special points.
\end{enumerate}
	\end{definition}

\begin{definition}
	\begin{enumerate}
\item	A {\em family of marked stable curves} is a flat morphism $\mu:\tt{Y} \rightarrow T$ with sections $\sigma_i:\tt{T} \rightarrow \tt{Y}$ such that  ($\mu^{-1} (t), \sigma_1(t), \cdots, \sigma_n(t))$ is a marked stable curve for all $t \in T$. The family will be denoted by $(\mu: \tt{Y} \rightarrow \tt{T}, \sigma_1, \cdots , \sigma_n)$ \\

\item Two families $(\mu: \tt{Y} \rightarrow \tt{T}, \sigma_1, \cdots , \sigma_n)$ and $(\mu': \tt{Y'} \rightarrow \tt{T}, \sigma_1', \cdots , \sigma_n')$ are called equivalent if there exist an isomorphism $\phi: \tt{Y} \cong \tt{Y'}$ over $\tt{T}$ which is compatible with sections i.e., $\phi \circ \sigma_i=\sigma_i'$. The equivalance will be denoted by $(\mu: \tt{Y} \rightarrow \tt{T}, \sigma_1, \cdots , \sigma_n) \equiv (\mu': \tt{Y'} \rightarrow \tt{T}, \sigma_1', \cdots , \sigma_n')$
	\end{enumerate}
\end{definition}
	
	There exist a moduli space of isomorphism classes of marked stable curves of genus $g$ and $n$ marked points which is denoted by $\overline{M}_{_{g,n}}$. This moduli space is the Deligne-Mumford compactification of the moduli space $M_{_{g,n}}$ of isomorphism classes of smooth curves of genus $g$ and $n$ marked points. We will briefly mention the method of GIT construction of  $\overline{M}_{_{g,n}}$ \cite{MR2431236}. We will assume genus $g \ge 2$ for the rest of the section.
	
	\subsection{Brief construction of $\overline{M}_{_{g,n}}$}
	
	Let $(C, x^1, x^2, \cdots ,x^n)$ be a marked prestable curve. The dualizing sheaf $\omega_C$ is the sheaf of logarithmic $1$ form $f$ on the normalization $\tilde{C}$ which are regular except simple poles at $\{p_1^j, p_2^j:1 \le j \le c\}$ such that $\tt{Res}_{p_1^j}(f)+\tt{Res}_{p_2^j}(f)=0$ where $p_1^j, p_2^j$ are inverse image of the node $z^j$. Let $\mathcal
	{L}$ be the twisted line bundle $\omega_C(x^1+x^2+ \cdots+x^n)$.
	
	\begin{lemma}
		If  $(C, x^1, x^2, \cdots ,x^n)$ is a marked stable curve then $\mathcal{L}^{\otimes \rho}$ is very ample if $\rho \ge 3$. We also note that $H^1(C, \mathcal{L}^{\otimes \rho})=0$ for $\rho \ge 2$.
	\end{lemma}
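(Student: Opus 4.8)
The plan is to deduce both assertions from the classical unmarked case --- that on a Deligne--Mumford stable curve of genus at least two the $\rho$-th tensor power of the dualizing sheaf is very ample for $\rho\ge 3$ and has vanishing $H^{1}$ for $\rho\ge 2$ --- by the standard device of capping off the marked points with elliptic tails. Concretely, for each marked point $x^{i}$ I would choose a smooth genus-one curve $E_{i}$ with a point $e_{i}\in E_{i}$ and form the nodal curve $\widehat{C}$ obtained from $C\sqcup E_{1}\sqcup\cdots\sqcup E_{n}$ by gluing $x^{i}$ to $e_{i}$ transversally, so that each $x^{i}$ becomes a node of $\widehat{C}$ and $C$ sits inside $\widehat{C}$ as a closed subcurve. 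The first thing to check is that $\widehat{C}$ is a stable curve of genus $g+n\ge 2$: every rational component of $C$ has gained one extra special point (hence still has at least three), each $E_{i}$ carries one special point, and all other components are unchanged. The key local point is the adjunction formula for the dualizing sheaf: on each component $C_{i}$ of $C$ it gives $\omega_{\widehat{C}}|_{C_{i}}\cong\omega_{C_{i}}\!\left(\text{nodes of }C\text{ on }C_{i}+\text{marked points on }C_{i}\right)\cong\mathcal{L}|_{C_{i}}$, and these identifications are compatible at the nodes of $C$ (the gluing/residue data of $\omega_{\widehat{C}}$ along a node of $C$ coincides with that of $\omega_{C}$), so that $\omega_{\widehat{C}}|_{C}\cong\mathcal{L}$.

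Granting this, the two conclusions follow formally. For very ampleness: by the cited result $\omega_{\widehat{C}}^{\otimes\rho}$ is very ample on $\widehat{C}$ for $\rho\ge 3$, giving a closed immersion $\widehat{C}\hookrightarrow\mathbb{P}^{N}$; restricting along the closed immersion $C\hookrightarrow\widehat{C}$ gives a closed immersion of $C$ with $\mathcal{O}_{\mathbb{P}^{N}}(1)|_{C}\cong\mathcal{L}^{\otimes\rho}$, so $\mathcal{L}^{\otimes\rho}$ is very ample. For the cohomology vanishing, let $i\colon C\hookrightarrow\widehat{C}$ denote the inclusion and use the exact sequence on $\widehat{C}$
\[
0\longrightarrow\omega_{\widehat{C}}^{\otimes\rho}\otimes\mathcal{I}_{C}\longrightarrow\omega_{\widehat{C}}^{\otimes\rho}\longrightarrow i_{*}\mathcal{L}^{\otimes\rho}\longrightarrow 0 ,
\]
where $\mathcal{I}_{C}$ is the ideal sheaf of $C$ in $\widehat{C}$. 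Since $\widehat{C}$ is one-dimensional, $H^{2}(\widehat{C},\omega_{\widehat{C}}^{\otimes\rho}\otimes\mathcal{I}_{C})=0$, and $H^{1}(\widehat{C},\omega_{\widehat{C}}^{\otimes\rho})=0$ for $\rho\ge 2$ by the classical result; hence $H^{1}(C,\mathcal{L}^{\otimes\rho})=H^{1}(\widehat{C},i_{*}\mathcal{L}^{\otimes\rho})$ is a quotient of zero. Note that this argument does not even require describing $\mathcal{I}_{C}$.

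The only real work, and the step I expect to be the main obstacle, is the isomorphism $\omega_{\widehat{C}}|_{C}\cong\mathcal{L}$: one must verify adjunction on each component and then the compatibility of the transition data at the nodes and at the marked points, which is a local computation. If a self-contained argument avoiding the unmarked case is wanted, I would instead argue directly: the combinatorial stability conditions translate into $\deg\!\big(\mathcal{L}|_{\widetilde{C_{i}}}\big)=2g_{i}-2+\nu_{i}+m_{i}>0$ on every component --- here $g_{i}$ is the genus of the normalization $\widetilde{C_{i}}$, $\nu_{i}$ the number of its points lying over nodes of $C$, and $m_{i}$ the number of marked points on $C_{i}$ --- so $\mathcal{L}$ is ample by the Nakai--Moishezon criterion; Serre duality on the Gorenstein curve $C$ then turns $H^{1}(C,\mathcal{L}^{\otimes\rho})=0$ into the inequality $(\rho-1)(2g_{i}-2+\nu_{i})+\rho m_{i}>0$ (valid for $\rho\ge 2$, by a short check over the stability cases), and turns very ampleness --- via the criterion that, because $H^{1}(\mathcal{L}^{\otimes\rho})=0$, the sheaf $\mathcal{L}^{\otimes\rho}$ is very ample iff $H^{0}(\mathcal{L}^{\otimes\rho})\to H^{0}(\mathcal{O}_{Z}\otimes\mathcal{L}^{\otimes\rho})$ is surjective for every length-two subscheme $Z\subset C$, i.e.\ iff $\operatorname{Hom}(\mathcal{I}_{Z},\omega_{C}\otimes\mathcal{L}^{\otimes(-\rho)})=0$ --- into the sharper inequality $(\rho-1)(2g_{i}-2+\nu_{i})+\rho m_{i}\ge 2$ (valid for $\rho\ge 3$); the delicate case there is $Z$ concentrated at a node, where $\mathcal{I}_{Z}$ is not invertible and one checks locally that $\operatorname{Hom}(\mathcal{I}_{Z},\omega_{C}\otimes\mathcal{L}^{\otimes(-\rho)})$ embeds into the space of sections of $\omega_{C}\otimes\mathcal{L}^{\otimes(-\rho)}$ pulled back to the normalization and twisted up by at most two points lying over that node, which still has negative degree on every component.
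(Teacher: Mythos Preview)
Your argument is correct, and it takes a genuinely different route from the paper. The paper gives no proof at all: it simply says the result ``can be proved using arguments similar to'' Deligne--Mumford's Theorem~1.2, i.e.\ it invites the reader to rerun the classical degree-and-duality computations in the marked setting. You instead \emph{reduce} to the unmarked case by attaching elliptic tails, which is cleaner: once one knows the standard adjunction fact that for a connected subcurve $C$ of a nodal curve $\widehat{C}$ one has $\omega_{\widehat{C}}|_{C}\cong\omega_{C}\bigl(C\cap\overline{\widehat{C}\setminus C}\bigr)$, the isomorphism $\omega_{\widehat{C}}|_{C}\cong\mathcal{L}$ is immediate and your worry about the ``main obstacle'' disappears. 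One small slip in wording: rational components of $C$ do not \emph{gain} special points in $\widehat{C}$ --- the marked points, which were already special, are simply traded for nodes --- so the special-point count is unchanged rather than increased; but your conclusion that $\widehat{C}$ is stable is of course correct. Your second, direct approach is essentially what the paper's one-line reference is gesturing at, so the elliptic-tail reduction is the part that adds value over the paper.
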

	\begin{proof}
		This can be proved using arguments similar to \cite[Theorem 1.2]{MR262240} 
	\end{proof}
	
	We will consider the linear system corresponding to the very ample line bundle $\mathcal{L}^{\otimes \rho}$ with $\rho$ large enough e.g., $\rho \ge 5$ will be enough to embedd the curve $C$ in a projective space. \\
	
	Let the dimension of the linear system $\tt{dim}( H^0(C, \mathcal{L}^{\otimes \rho}))$ be $N$. The moduli problem is rigidified with the choice of an isomorphism $\tt{H}^0(C, \mathcal{L}^{\otimes \rho})) \cong \mathbb{C}^{N}$ and hence it induces an action of $\tt{SL}(N)$. Let $\tt{p}(t)$ be the Hilbert polynomial of the curve $C$ with respect to the line bundle $\mathcal{L}^{\otimes \rho}$ i.e., $\tt{p}(n)=\chi(\mathcal{L}^{\otimes (\rho \cdot n)})$. Let $\tt{Hilb}^{p(t)}(\P^{(N-1)})$ be the Hilbert scheme of curves in $\P^{N-1}$ of Hilbert polynomial $\tt{p}(t)$. Let the closed subscheme $\mathfrak{C} \hookrightarrow \tt{Hilb}^{p(t)}(\P^{(N-1)}) \times \P^{N-1}$ be the universal curve. We will consider the closed subscheme of incidence
	\begin{equation}
	\label{incidence}
		\tt{I} \hookrightarrow  Hilb^{p(t)} (\P^{N-1}) \times \underbrace{\P^{N-1} \times \cdots\times \P^{N-1}}_\text{n times}
	\end{equation}
	consisting of $(n+1)$ tuple \{$([C], x^1, x^2,  \cdots ,x^n):[C] \in \tt{Hilb}^{p(t)} (\P^{N-1})$ and $x^i \in C$  \}.\\
	
	We will have a natural forgetful map
	\begin{equation}
		\tt{Hilb}^{p(t)} (\P^{N-1}) \times \underbrace{\P^{N-1} \times \cdots\times \P^{N-1}}_\text{n times} \rightarrow Hilb^{p(t)} (\P^{N-1})
	\end{equation} 
	Taking the product with the universal curve $\mathfrak{C}$ we get the following closed subscheme
	\begin{equation}
		\mathfrak{C} \times \underbrace{\P^{N-1} \times \cdots\times \P^{N-1}}_\text{n times} \hookrightarrow \tt{Hilb}^{p(t)} (\P^{N-1}) \times \P^{N-1} \times \underbrace{\P^{N-1} \times \cdots\times \P^{N-1}}_\text{n times}
	\end{equation}
	We will take the following base change
	\begin{equation}
	\begin{tikzcd}[column sep=4em,row sep=2em]
	\mathfrak{C}_{\tt{I}} \arrow[r, hook]  \dar[hook]  & \mathfrak{C} \times (\P^{N-1})^n \dar[hook] \\%
	\tt{I} \times \P^{N-1} \rar[swap, hook] & \tt{Hilb}^{p(t)}(\P^{(N-1)}) \times \P^{N-1} \times (\P^{N-1})^n
	\end{tikzcd}
	\end{equation} 
	
	 The incidence subscheme $\tt{I}$ is called the Hilbert scheme of marked curves and the closed subscheme $\mathfrak{C}_{\tt{I}}$ is called the marked universal curve.
	We consider the locally closed subscheme $\tt{J} \hookrightarrow I$ whose points has the following property:\\
	
	$1$. The point $([C], x^1, x^2, \cdots ,x^n)$ has to be a  marked prestable curve \eqref{markedsemistablecurve}.\\
	
	$2$.  The embedding of the curve $C$ in $\P^{N-1}$ has to be non degenerate i.e., the image does not lie inside a hyperplane.\\
	
	$3$. There exist an isomorphism $\omega_C(x^1+x^2+ \cdots+x^n)^{\otimes \rho} \cong \mathcal{O}_{\P^{N-1}}(1)_{|C}$.\\
	
	Prestability and non degeneracy are open conditions. That the third property is a closed condition is verified in \cite[p. 58]{MR1492534}. Therefore we will get a locally closed subscheme $\tt{J}$ of $\tt{I}$. The elements of $\tt{J}$ parameterizes the marked stable curves. Restricting the marked universal curve we get the closed subscheme $\mathfrak{C}_{\tt{J}} \hookrightarrow \tt{J} \times \P^{N-1}$. The family $\mathfrak{C}_{\tt{J}} \rightarrow \tt{J}$ has natural sections $\sigma_i:\tt{J} \rightarrow \mathfrak{C}_{\tt{J}}$ which assigns the marked points $x^i$ to each point $([C], x^1, \cdots, x^n) \in \tt{J}$. This family has the local universal property for the moduli problem of $\overline{M}_{_{g, n}}$ in the following sense:
	\begin{proposition}
		\label{localuniproperty}
		Let $(\mu:\tt{Y} \rightarrow T, \sigma_1, \cdots, \sigma_n)$ be a family of marked stable curves. Then for all $t$ in $T$ there exist a neighbourhood $\tt{U}$ of $t$ and a morphism $\tt{U} \rightarrow \tt{J}$ such that $\tt{Y}|_{U} \equiv \mathfrak{C}_{\tt{J}} \times_{J} U$.
	\end{proposition}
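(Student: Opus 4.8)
The plan is to embed the family $\mu\colon Y\to T$ fibrewise-projectively using the relative dualizing sheaf and then invoke the universal property of the Hilbert scheme. First I would form the line bundle $\mathcal{L}:=\omega_{Y/T}\bigl(\sigma_1(T)+\cdots+\sigma_n(T)\bigr)$ on $Y$. By the very ampleness lemma above, for each $t\in T$ the restriction of $\mathcal{L}^{\otimes\rho}$ to $\mu^{-1}(t)$ is very ample and has vanishing $H^1$ once $\rho\ge 5$. Hence $R^1\mu_*\mathcal{L}^{\otimes\rho}=0$, the sheaf $\mu_*\mathcal{L}^{\otimes\rho}$ is locally free of rank $N$ (the integer occurring in the construction above), its formation commutes with base change, and $\mathcal{L}^{\otimes\rho}$ is relatively very ample over $T$; moreover each fibre is embedded by the complete linear system $|\mathcal{L}^{\otimes\rho}|$ with Hilbert polynomial $p(t)$.

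Next, given $t\in T$ I would choose a neighbourhood $U$ of $t$ on which $\mu_*\mathcal{L}^{\otimes\rho}$ is free and fix a basis; this yields a closed immersion $Y|_U\hookrightarrow U\times\P^{N-1}$ over $U$ together with an identification $\mathcal{L}^{\otimes\rho}|_{Y|_U}\cong\mathcal{O}_{\P^{N-1}}(1)|_{Y|_U}$. By the universal property of $\mathrm{Hilb}^{p(t)}(\P^{N-1})$ this family is classified by a morphism $U\to\mathrm{Hilb}^{p(t)}(\P^{N-1})$. Composing each section $\sigma_i|_U$ with the embedding gives morphisms $U\to\P^{N-1}$, and since $\sigma_i(t)\in\mu^{-1}(t)$ the resulting morphism $U\to\mathrm{Hilb}^{p(t)}(\P^{N-1})\times(\P^{N-1})^{n}$ factors through the incidence subscheme $I$ \eqref{incidence}.

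It then remains to verify that $U\to I$ factors through the locally closed subscheme $J$, which I would check via the three conditions defining $J$: fibrewise the curves are marked stable, hence marked prestable (condition 1); they are embedded by a complete linear system, hence non-degenerately (condition 2); and the chosen identification restricts on each fibre to the required isomorphism $\omega_C(x^1+\cdots+x^n)^{\otimes\rho}\cong\mathcal{O}_{\P^{N-1}}(1)|_C$ (condition 3). Pulling back the marked universal curve $\mathfrak{C}_J\hookrightarrow J\times\P^{N-1}$ together with its sections along $U\to J$ recovers $Y|_U$ with $\sigma_1,\dots,\sigma_n$, i.e. $Y|_U\equiv\mathfrak{C}_J\times_J U$, which is the assertion.

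The main obstacle is the passage from the fibrewise statements supplied by the lemma above (very ampleness and $H^1$-vanishing for a single marked stable curve) to their relative versions over $T$: this is where cohomology and base change is needed, both to conclude that $\mu_*\mathcal{L}^{\otimes\rho}$ is locally free of constant rank $N$ and that $\mathcal{L}^{\otimes\rho}$ is relatively very ample. It is also worth remarking that the trivialization of $\mu_*\mathcal{L}^{\otimes\rho}$, and hence the morphism $U\to J$, is determined only up to the $\mathrm{GL}(N)$ action (equivalently, after projectivizing, the $\mathrm{SL}(N)$ action), which is exactly the reason one obtains a \emph{local} universal family rather than a fine moduli space.
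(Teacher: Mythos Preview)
Your argument is correct and is exactly the standard one. The paper itself does not supply a proof of this proposition at all: its ``proof'' consists solely of the citation ``For a proof we refer to \cite[Proposition 3.4]{MR2431236}.'' The approach in that reference is precisely the one you outline---form $\omega_{Y/T}(\sum\sigma_i)^{\otimes\rho}$, apply cohomology and base change to get a locally free pushforward of rank $N$, trivialize on a neighbourhood $U$ to obtain a closed embedding $Y|_U\hookrightarrow U\times\P^{N-1}$, classify this by the universal property of the Hilbert scheme, add the sections to land in $I$, and then verify the three conditions defining $J$. Your remark about the $\mathrm{GL}(N)$-ambiguity in the trivialization being the source of ``local'' rather than ``fine'' universality is also exactly the point.
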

	\begin{proof}
		For a proof we refer to \cite[Proposition 3.4]{MR2431236}.
	\end{proof}
	From the rigidification we have an action of $\tt{SL}(N)$ on $\P^{N-1}$. Thus $\tt{SL}(N)$ will induce a natural action on $\tt{Hilb}^{p(t)} (\P^{N-1})$ and so $\tt{SL}(N)$ will act diagonally on $\tt{I}$. It is clear that $\tt{J}$ and $\overline{\tt{J}}$ is invariant under the action of $\tt{SL(N)}$ with the closure of $\tt{J}$ being taken in the projective variety $\tt{I}$. The main GIT problem in \cite{MR2431236} is the study of the $\tt{SL}(N)$ action on $\tt{I}$.\\
	
	Let the vector space $\tt{W}=H^0(\P^{N-1}, \mathcal{O}_{\P^{N-1}}(m))$ and $\tt{W}_m=\wedge^{p(m)} W$. Then we will have a closed embedding $\tt{Hilb}^{p(t)}(\P^{N-1}) \hookrightarrow \P(W_m)$. This embedding is the composition of the Grothendieck embedding with the Pl\"{u}cker embedding. We choose the following ample line bundle 
	\begin{equation}
		\text{L}_{m, m'}=\left(\mathcal{O}_{\P(W_m)}(1) \boxtimes \left( \bigotimes_{i=1}^n \mathcal{O}_{\P^{N-1}}(m')\right)\right)_{\big | \tt{I}}
	\end{equation} 
	which gives a linearization for the $\tt{SL}(N)$ action on $\tt{I}$ \cite[pp. $17-18$]{MR2431236}. Now we will state the main result of \cite{MR2431236}.
	\begin{theorem}[\cite{MR2431236}, Theorem 6.3, Propositions 6.8, $\;$ 6.9 ]
		\label{GITtheoremmodcurves}
		There exist natural numbers $m, m'$( depends only on $g, n$) such that  $\overline{\tt{J}}^{ss}(\tt{L}_{m, m'})=\overline{\tt{J}}^{s}(\tt{L}_{m, m'})=\tt{J}$. 
			\end{theorem}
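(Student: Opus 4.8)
The plan is to deduce Theorem~\ref{GITtheoremmodcurves} from a Hilbert--Mumford numerical analysis of the $\mathrm{SL}(N)$-action on $\overline{\tt{J}}\subseteq\tt{I}$ \eqref{incidence}, following Mumford's asymptotic stability of pluricanonically embedded curves and Gieseker's sharpening of it, with the marked-point modification built in through the factors $\mathcal{O}_{\P^{N-1}}(m')$ of the linearization $\tt{L}_{m,m'}$. Concretely there are two assertions to verify: \textbf{(a)} every point of $\tt{J}$ is $\mathrm{SL}(N)$-stable with respect to $\tt{L}_{m,m'}$, and \textbf{(b)} every $\mathrm{SL}(N)$-semistable point of $\overline{\tt{J}}$ already lies in $\tt{J}$. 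Since $\tt{J}$ is open in $\overline{\tt{J}}$ once (b) holds, (a) and (b) together give $\overline{\tt{J}}^{ss}(\tt{L}_{m,m'})=\overline{\tt{J}}^{s}(\tt{L}_{m,m'})=\tt{J}$; that $m,m'$ can be taken to depend only on $g,n$ will follow from the boundedness of the family $\tt{J}$ and the uniformity of the estimates below.

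For \textbf{(a)}, I would fix a $\rho$-canonically embedded $n$-pointed stable curve $(C,x^1,\dots,x^n)\hookrightarrow\P^{N-1}$ \eqref{stablecurve} and a nontrivial one-parameter subgroup $\lambda$ of $\mathrm{SL}(N)$ with weights $\rho_0\ge\rho_1\ge\cdots\ge\rho_{N-1}$ on a coordinate basis of $H^0(C,\mathcal{L}^{\otimes\rho})$. The Mumford weight decomposes as $\mu((C,x^\bullet),\lambda)=\mu_{\mathrm{Hilb}}(C,\lambda)+m'\sum_{i=1}^{n}\mu(x^i,\lambda)$, where $\mu_{\mathrm{Hilb}}$ is the weight at the $m$-th Grothendieck--Pl\"ucker point and each $\mu(x^i,\lambda)$ is the largest $\rho_j$ over coordinates not vanishing at $x^i$. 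Passing to the flat limit $C_0=\lim_{t\to0}\lambda(t)\cdot C$ and the weighted filtration of $H^0(C,\mathcal{L}^{\otimes\rho})$ it induces, one controls $\mu_{\mathrm{Hilb}}$ for $m\gg0$ by the classical comparison between $\sum_j\rho_j$ and the weighted Hilbert function $e_\lambda$ of the limit, using boundedness of $\tt{J}$ (a flattening stratification argument) to make the bound uniform. Because $\omega_C(\sum x^i)$ is ample on a stable pointed curve, its high powers are very ample and suitably balanced, so the only configurations that could make $\mu_{\mathrm{Hilb}}\ge0$ are those concentrated on a rational subcurve meeting the rest of $C$ in too few points, or those in which markings have collided or moved onto a node; in each such case the linear term $m'\sum_i\mu(x^i,\lambda)$ is strictly negative of larger order once $m'$ lies in an appropriate window relative to $m,\rho,g,n$, so that $\mu<0$ for every $\lambda$, i.e.\ stability.

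For \textbf{(b)}, a point of $\overline{\tt{J}}\setminus\tt{J}$ parametrizes a flat limit that is non-reduced, non-nodal, carries an embedded or contracted component, or has marked points violating the prestability and position conditions entering \eqref{localuniproperty}. For each such degeneration one exhibits an explicit destabilizing $\lambda$: essentially the one-parameter subgroup adapted to a flag of linear subspaces cutting out the worst subcurve (or isolating the locus where the markings degenerate), for which a direct computation gives $\mu\ge0$. Hence such points are not even semistable, and combined with (a) this pins down $\overline{\tt{J}}^{ss}=\tt{J}$; as every point of $\tt{J}$ is stable we also get $\overline{\tt{J}}^{s}=\tt{J}$. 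The resulting quotient $\tt{J}\sslash\mathrm{SL}(N)$ is then a projective coarse space, and Deligne--Mumford stable reduction identifies it with $\overline{M}_{_{g, n}}$.

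The hard part will be the simultaneous calibration of $m'$ against $m$ (and against $\rho,g,n$): the asymptotic control of $\mu_{\mathrm{Hilb}}$ as $m\to\infty$ must be matched precisely with the linear marked-point contribution so that one and the same pair $(m,m')$ achieves both (a) and (b). This is the computational heart of the Gieseker-type argument --- estimating the weighted Hilbert functions $e_\lambda$ of flat limits and bounding the degrees of the universal quotient restricted to subcurves of $C$ --- and is exactly what is carried out in \cite{MR2431236}.
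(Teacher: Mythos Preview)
The paper does not supply its own proof of this statement: Theorem~\ref{GITtheoremmodcurves} is stated in the appendix purely as a quotation of \cite[Theorem~6.3, Propositions~6.8,~6.9]{MR2431236}, and the text immediately moves on to use it. So there is nothing in the paper to compare your argument against beyond the bare citation.

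That said, your sketch is a faithful outline of the argument actually carried out in \cite{MR2431236}: the Hilbert--Mumford analysis of the $\mathrm{SL}(N)$-action on the incidence scheme, with the numerical weight splitting into the Hilbert-point contribution and the $m'$-weighted marked-point contributions, and the two-sided task of (a) verifying stability on $\tt{J}$ and (b) exhibiting destabilizing one-parameter subgroups off $\tt{J}$. Two small remarks. First, the clause ``$\tt{J}$ is open in $\overline{\tt{J}}$ once (b) holds'' is superfluous: $\tt{J}$ is locally closed in $\tt{I}$ by construction, hence automatically open in its closure, independently of any GIT statement. Second, in part (b) you write that the destabilizing $\lambda$ gives $\mu\ge 0$; to contradict semistability you need the strict inequality $\mu>0$ (in the usual sign convention), and indeed the explicit one-parameter subgroups in \cite{MR2431236} are chosen to achieve this strictly. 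Neither point affects the correctness of your overall plan, which is exactly the route the cited reference takes.
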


	 The GIT quotient ${\displaystyle \overline{\tt{J}} \sslash \tt{SL(N)}}$ is a projective variety. In fact the quotient is an orbit space which is denoted by  $\overline{M}_{_{g,n}}$. The moduli space $\overline{M}_{_{g,n}}$ is the coarse moduli space of the following functor.
	 \begin{definition}
	 	We define the moduli functor
	 	\begin{center}
	 		$\underline{\mathcal{M}}_{_{g,n}}$: $\tt{Sch}/\mathbb{C} \rightarrow \tt{Sets}$ 
	 	\end{center}
 	which assigns a scheme $\tt{T}$ to the set of equivalance classes of families of marked stable curves $(\mu:\tt{Y} \rightarrow \tt{T}, \sigma_1, \cdots, \sigma_n)$.
	 \end{definition}
	For the purpose of the paper we will identify the flat and projective family
	\begin{equation}
	\label{localunifamily}
		 \begin{tikzcd}
		\mathfrak{C}_{\tt{J}}  \arrow[d]   \\%
		\tt{J} \ar[u, bend left=50, "\sigma_i"]
		\end{tikzcd}
	\end{equation}
	 as $\mathcal{X} \rightarrow {\tt S_{_{g,n}}}$. We will have the relative dualizing sheaf ( a line bundle ) $\omega_{\mathcal{X}/{\tt S_{_{g,n}}}}$ and the relative very ample line bundle $\omega_{\mathcal{X}/{\tt S_{_{g,n}}}}(\sigma_1+\sigma_2+ \cdots +\sigma_n)$.

	\section{Universal moduli space of parabolic torsion free sheaves}
	\label{appendixb}

\begin{lemma}
	\label{p_2stabilitykeylemma}
	A parabolic sheaf $\mathcal{E}_*$ is $p_2$-stable($p_2$-semistable) if and only if for any non zero proper saturated subsheaf 
	$\mathcal{F}$ of $\mathcal{E}$ with the induced parabolic structure on $\mathcal{F}$ we have either $par \mu(\mathcal{F}_*)\footnote{This is a standard notation used for the expression in the LHS of \eqref{slopestability}} < par \mu(\mathcal{E}_*)$	or $par \mu(\mathcal{F}_*) = par \mu(\mathcal{E}_*)$ and $\mu(\mathcal{F}) > (\ge)\mu(\mathcal{E}) $.
\end{lemma}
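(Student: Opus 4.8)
The plan is to specialise the polynomial inequality \eqref{p_2stability} to the one–dimensional situation and then read off the lexicographic comparison coefficient by coefficient. On a marked prestable curve every Hilbert polynomial is affine, so write $H(\mathcal{E},t)=\mathrm{totr}(\mathcal{E})\,t+\chi(\mathcal{E})$ and $H(\mathcal{F},t)=\mathrm{totr}(\mathcal{F})\,t+\chi(\mathcal{F})$, while $r^i_j=\dim(\mathcal{E}/F^i_j\mathcal{E})$ and $s^i_j=\dim(\mathcal{F}/F^i_j\mathcal{F})$ are constants (as recorded in the Remark following \eqref{p_2stability}). Substituting, the two sides of \eqref{p_2stability} become products of an affine polynomial in $y$ with an affine polynomial in $x$, namely $H(\mathcal{E},y)\cdot\bigl(H(\mathcal{F},x)-\tfrac1n\sum_{i,j}\epsilon^i_j s^i_j\bigr)$ on the left and $H(\mathcal{F},y)\cdot\bigl(H(\mathcal{E},x)-\tfrac1n\sum_{i,j}\epsilon^i_j r^i_j\bigr)$ on the right, so \eqref{lexicographic} applies verbatim: one compares, in order, the $xy$–coefficient, the $y$–coefficient, the $x$–coefficient, and the constant term.

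The $xy$–coefficient equals $\mathrm{totr}(\mathcal{E})\,\mathrm{totr}(\mathcal{F})$ on both sides (and $\mathrm{totr}(\mathcal{F})>0$ since $\mathcal{F}$ is a nonzero pure sheaf of dimension one and $\mathcal{O}_C(1)$ is ample), so the comparison always passes to the $y$–coefficient. Dividing by $\mathrm{totr}(\mathcal{E})\,\mathrm{totr}(\mathcal{F})>0$, the $y$–coefficient comparison reads
\[
\frac{\chi(\mathcal{F})-\tfrac1n\sum_{i,j}\epsilon^i_j s^i_j}{\mathrm{totr}(\mathcal{F})}\ <(\le)\ \frac{\chi(\mathcal{E})-\tfrac1n\sum_{i,j}\epsilon^i_j r^i_j}{\mathrm{totr}(\mathcal{E})}.
\]
The heart of the argument is to identify the two sides with the parabolic slopes $\mathrm{par}\,\mu(\mathcal{F}_*)$ and $\mathrm{par}\,\mu(\mathcal{E}_*)$ of \eqref{slopestability}. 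For this I would run Abel summation on $\epsilon^i_j=\alpha^i_j-\alpha^i_{j-1}$, using the conventions $\alpha^i_{l_i+1}=1$ and $r^i_1=0$, to obtain the telescoping identity $\sum_{i,j}\epsilon^i_j r^i_j=\sum_i\dim(\mathcal{E}/\mathcal{E}(-x^i))-\bigl(\mathrm{par}\,\mathrm{deg}(\mathcal{E}_*)-\deg\mathcal{E}\bigr)$ and its analogue for $\mathcal{F}$; then, feeding these back in and applying Riemann--Roch ($\chi=\deg+\mathrm{totr}\,(1-g)$) together with the normalisation $\sum a_i=1$, each side of the display becomes the corresponding parabolic slope up to one and the same additive constant fixed by the numerical type. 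Hence the $y$–coefficient comparison is exactly $\mathrm{par}\,\mu(\mathcal{F}_*)<(\le)\,\mathrm{par}\,\mu(\mathcal{E}_*)$.

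If the $y$–coefficients agree, i.e.\ $\mathrm{par}\,\mu(\mathcal{F}_*)=\mathrm{par}\,\mu(\mathcal{E}_*)$, one passes to the $x$–coefficient, whose comparison is $\chi(\mathcal{E})\,\mathrm{totr}(\mathcal{F})<(\le)\chi(\mathcal{F})\,\mathrm{totr}(\mathcal{E})$, that is $\chi(\mathcal{E})/\mathrm{totr}(\mathcal{E})<(\le)\chi(\mathcal{F})/\mathrm{totr}(\mathcal{F})$; since on a fixed curve $\chi/\mathrm{totr}$ and $\mu=\deg/\mathrm{totr}$ differ by the constant $1-g$, this is precisely $\mu(\mathcal{F})>(\ge)\mu(\mathcal{E})$, with the inequality flipped relative to the slope level, exactly as in the statement. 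It then remains to check that the constant term contributes nothing new: when the $y$– and $x$–coefficients are both equalities, subtracting them gives $\tfrac1n\sum_{i,j}\epsilon^i_j r^i_j/\mathrm{totr}(\mathcal{E})=\tfrac1n\sum_{i,j}\epsilon^i_j s^i_j/\mathrm{totr}(\mathcal{F})$ in addition to $\chi(\mathcal{E})/\mathrm{totr}(\mathcal{E})=\chi(\mathcal{F})/\mathrm{totr}(\mathcal{F})$, and a one–line manipulation shows the two constant terms then coincide, so the lexicographic $\le$ is automatic there and the strict $<$ cannot occur. Assembling the three bullets: \eqref{p_2stability} holds (strictly) for every saturated $\mathcal{F}$ iff for each $\mathcal{F}$ either $\mathrm{par}\,\mu(\mathcal{F}_*)<\mathrm{par}\,\mu(\mathcal{E}_*)$, or $\mathrm{par}\,\mu(\mathcal{F}_*)=\mathrm{par}\,\mu(\mathcal{E}_*)$ and $\mu(\mathcal{F})>\mu(\mathcal{E})$; the non–strict version yields the semistable statement with $\mu(\mathcal{F})\ge\mu(\mathcal{E})$.

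The only genuine work is the second paragraph: pushing the telescoping identity through and verifying that the correction terms reorganise into a single constant common to $\mathcal{E}$ and to every saturated subsheaf $\mathcal{F}$ equipped with its induced flag dimensions. This is where care with the normalisation $\sum a_i=1$ and with the precise definition of the induced parabolic structure on $\mathcal{F}$ is required; the remaining steps are formal lexicographic bookkeeping plus Riemann--Roch.
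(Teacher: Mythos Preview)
The paper does not actually prove this lemma: its entire argument is the sentence ``This easily follows from the definitions'' together with a reference to \cite[Remark~4.3.13]{schluter2011universal}. Your coefficient-by-coefficient unpacking of the lexicographic inequality \eqref{p_2stability} is precisely the computation that reference is standing in for, so your approach is the intended one and is considerably more explicit than what the paper offers. The $xy$-level (automatic equality), the $x$-level (giving $\mu(\mathcal{F})>\mu(\mathcal{E})$ via $\chi/\mathrm{totr}=\mu+(1-g)$), and the observation that the constant term becomes redundant once the two preceding levels agree, are all handled correctly.

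One point deserves more than the acknowledgement you give it. At the $y$-level your Abel summation yields
\[
\frac{\chi(\mathcal{F})-\tfrac{1}{n}\sum_{i,j}\epsilon^i_j s^i_j}{\mathrm{totr}(\mathcal{F})}
=\Bigl(1-\tfrac{1}{n}\Bigr)\mu(\mathcal{F})+\tfrac{1}{n}\,\mathrm{par}\,\mu(\mathcal{F}_*)+(1-g)-\tfrac{1}{n}\,\frac{\sum_i s^i_{l_i+1}}{\mathrm{totr}(\mathcal{F})},
\]
and likewise for $\mathcal{E}$. For this to reduce to a pure $\mathrm{par}\,\mu$ comparison ``up to one and the same additive constant'' you need $\sum_i s^i_{l_i+1}/\mathrm{totr}(\mathcal{F})=\sum_i r^i_{l_i+1}/\mathrm{totr}(\mathcal{E})$. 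Since the $x^i$ lie in the smooth locus, $r^i_{l_i+1}$ is the rank of $\mathcal{E}$ on the component through $x^i$, so for a sheaf of uniform rank both ratios equal $n$ and everything collapses as you claim; but for a saturated subsheaf $\mathcal{F}$ with non-uniform multirank on a reducible curve this equality is not automatic. This is exactly the spot you single out as requiring care with the normalisation and the induced flag dimensions, and it is genuine: one must either invoke the uniform-rank hypothesis that is in force throughout the construction, or check against Schl\"uter's precise formulation that the residual terms cancel. Apart from this, your argument is complete.
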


\begin{proof}
	This easily follows from the definitions. For a proof we refer to \cite[ Remark 4.3.13]{schluter2011universal}.
\end{proof}
\begin{remark}
	\label{slopep_2stability}
We will have the following relations:

\{par $\mu$ stable sheaves\} $\subseteq$ \{par $p_2$ stable sheaves\} $\subseteq$ \{par $p_2$ semistable sheaves\} $\subseteq$ \{par $\mu$ semistable sheaves\}

The inclusions are in general strict.
\end{remark}

\begin{remark}
	As mentioned in the above remark \eqref{slopep_2stability} the notion of slope stability \eqref{slopestability} and $p_2$ stability \eqref{p_2stability} are not equivalent even in the case of curves. We will provide an example to show that the R.H.S inclusion is strict.\\
	
	Let us consider the rank $2$ vector bundle $E=\mathcal{O} \oplus \mathcal{O}(1)$ on $\P^1$. Let $P$ and $Q$ be the parabolic points on $\P^1$. At both $P$ and $Q$ we will define the following same parabolic structure with same weights on $E$
	
	\begin{equation}
	E \supseteq \mathcal{O} \oplus \mathcal{O} \supseteq \mathcal{O}(-1) \oplus \mathcal{O}
	\end{equation} 
	with attached weights
	\begin{equation}
		0 < \frac{1}{4} < \frac{3}{4} < 1
	\end{equation}  
	
	Then $\text{par} \hspace{0.1cm} \mu(E)=\frac{\text{deg}(E)+ \frac{1}{4} +\frac{3}{4}  +\frac{1}{4}+ \frac{3}{4}}{2}=\frac{3}{2}$. \\
	
	Let $\text{L}$ be a rank $1$ saturated subbundle of $E$. We consider the induced maps $\text{L} \rightarrow \mathcal{O}$ and $\text{L} \rightarrow \mathcal{O}(1)$. If either of them is zero then we will have either $\text{L} \cong \mathcal{O}$ or $\text{L} \cong \mathcal{O}(1)$, since $L$ is saturated. If both of them is nonzero, then either $L \cong \mathcal{O}$ or $\text{deg}(L) \le -1$. \\
	
	If $\text{deg}(L) \le -1$ then we can see that $\text{par} \hspace{0.1cm} \mu(L) < \text{par} \hspace{0.1cm} \mu(E)$. In both cases when $L \cong \mathcal{O}$ we will have the following induced parabolic strucrture 
	
	\begin{equation}
	\mathcal{O} \supseteq \mathcal{O} \supseteq \mathcal{O}(-1)
	\end{equation} 
	with attached weights
	\begin{equation}
	0 < \frac{1}{4} < \frac{3}{4} < 1
	\end{equation}  
	 
	 Then $\text{par} \hspace{0.1cm} \mu(\mathcal{O})=0+\frac{3}{4} +\frac{3}{4} =\frac{6}{4}= \text{par} \hspace{0.1cm} \mu(E)$ and $\mu(\mathcal{O}) < \mu(E)$. Then by \eqref{p_2stabilitykeylemma} $E$ can not be $p_2$-semistable.\\
	 
	 If $L \cong \mathcal{O}(1)$, then we will have the following induced parabolic structure 
	 
	 \begin{equation}
	 \mathcal{O}(1) \supseteq \mathcal{O} \supseteq \mathcal{O}
	 \end{equation} 
	 with attached weights
	 \begin{equation}
	 0 < \frac{1}{4} < \frac{3}{4} < 1
	 \end{equation} 
	 
	 Then we will get $\text{par} \hspace{0.1cm} \mu(\mathcal{O}(1))=1+\frac{1}{4} +\frac{1}{4} =\frac{6}{4}= \text{par} \hspace{0.1cm} \mu(E)$. Hence $E$ is $\mu$ semistable.
\end{remark}

\begin{lemma}[boundedness]
	\label{boundedness}
	Consider the local universal family $\mu:\mathcal{X} \rightarrow {\tt S_{_{g,n}}}$ \eqref{localunifamily}. Then the family of objects \bigg\{$p_2$ semistable sheaves with fixed numerical type $(r, d, r^i_j, \alpha^i_j)$ on $\mathcal{X}_s$:$s \in {\tt S_{_{g,n}}}$ \bigg\} is bounded.
\end{lemma}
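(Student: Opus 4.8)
The plan is to reduce the statement to a classical boundedness result for families of semistable sheaves on a single variety, exploiting the fact that the base ${\tt S_{_{g,n}}}$ is itself of finite type. First I would recall that for a fixed ample polarization, a family of coherent sheaves on the fibers of a projective morphism $\mathcal{X} \to {\tt S_{_{g,n}}}$ is bounded if and only if there is a single polynomial bounding all their Hilbert polynomials \emph{and} a uniform bound on the regularity (in the sense of Castelnuovo--Mumford) of the members; equivalently, one may appeal to Grothendieck's lemma, which says boundedness is equivalent to the existence of a uniform $m_0$ such that every sheaf in the family, twisted by $\mathcal{O}_{\mathcal{X}_s}(m_0)$, is globally generated with vanishing higher cohomology. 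So the real content is to produce such a uniform $m_0$ valid for all $p_2$-semistable parabolic sheaves of the fixed numerical type $(r,d,r^i_j,\alpha^i_j)$ simultaneously over all $s \in {\tt S_{_{g,n}}}$.

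The key steps, in order: (1) Observe that $p_2$-semistability of $\mathcal{E}_*$ implies $\mu$-semistability of the underlying parabolic sheaf $\mathcal{E}_*$ (this is exactly the chain of implications recorded in Remark \ref{slopep_2stability}), and $\mu$-semistability of $\mathcal{E}_*$ forces a bound on the $\mu$-slopes of subsheaves, hence a bound on the slopes of quotients; since the parabolic weights $\alpha^i_j$ lie in a fixed finite set and are bounded by $1$, the parabolic slope and the ordinary slope differ by a bounded amount, so $\mu$-semistability of $\mathcal{E}_*$ gives ordinary $\mu$-semistability of $\mathcal{E}$ up to a universally bounded error in slope. (2) Use the standard boundedness theorem for (ordinary) semistable, or "almost semistable", sheaves of fixed rank and degree on the fibers of a projective family over a Noetherian base --- e.g. the relative version of the Grothendieck--Maruyama boundedness theorem, or Simpson's boundedness --- to conclude that the family of underlying sheaves $\{\mathcal{E} : \mathcal{E}_* \text{ is } p_2\text{-semistable}\}$ is bounded. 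Here one uses that ${\tt S_{_{g,n}}}$ is of finite type over $\mathbb{C}$ and that $\mathcal{O}_{\mathcal{X}}(1) = (\omega_{\mathcal{X}/{\tt S_{_{g,n}}}}(\sum\sigma_i))^{\otimes\rho}$ is relatively ample, so the fibers $\mathcal{X}_s$ form a bounded family of polarized curves with uniformly bounded geometry (arithmetic genus $g$, at most a fixed number of components and nodes). (3) Finally, observe that the parabolic structure adds no unboundedness: a quasi-parabolic filtration of $\mathcal{E}$ at the $n$ marked points is a point in a flag variety whose dimension is bounded purely by $r$ and the fixed $r^i_j$, and the flag varieties over the bounded family of underlying sheaves again form a bounded family. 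Hence the set of parabolic sheaves $\mathcal{E}_*$ is bounded, which is the assertion.

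The step I expect to be the main obstacle is step (2): pinning down precisely which boundedness theorem applies given that the curves $\mathcal{X}_s$ are \emph{reducible nodal} (marked stable) curves, not smooth, so a sheaf of "uniform rank $r$" on a general fiber need not have uniform rank, and $\mu$-semistability must be interpreted via the multirank as in \eqref{slopestability}. One must check that $p_2$-semistability still controls the multiranks and partial degrees of all saturated subsheaves --- this is where the Hilbert-polynomial form of the inequality \eqref{p_2stability} does real work, since on a reducible curve the lexicographic comparison of the two-variable polynomials encodes both the total-rank slope and the per-component data. The cleanest route is probably to invoke the boundedness already established in the torsion-free setting of \cite{schluter2011universal} for $p_2$-semistable pure sheaves of this numerical type (which is implicit in the very construction of the Quot scheme $\text{Q}_g(\mu, V_l, H)$ that is cited repeatedly above), thereby reducing our lemma to a result in the literature rather than reproving it; the parabolic refinement in step (3) is then routine.
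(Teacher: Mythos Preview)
Your proposal is correct, and in its final paragraph you land on exactly what the paper does: the paper's entire proof of this lemma is a one-line citation to \cite[Theorem 4.5.2]{schluter2011universal}, deferring the boundedness of $p_2$-semistable parabolic sheaves of fixed numerical type to Schl\"{u}ter's work. Your steps (1)--(3) are a reasonable reconstruction of how that argument goes (reduce to ordinary $\mu$-semistability up to a bounded parabolic correction, apply Simpson/Maruyama-type boundedness over the finite-type base, then note the flag data is parametrized by a projective bundle), but the paper itself supplies none of this and simply invokes the reference---so you have in fact written more than the paper does.
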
 

\begin{proof}
 For the proof we refer to \cite[Theorem 4.5.2]{schluter2011universal}.
\end{proof}

We have the natural closed embedding $\mathcal{X} \hookrightarrow {\tt S_{_{g,n}}} \times \P^{N-1}$. Thus for any $s \in S_{_{g,n}}$ we have $\mathcal{X}_s \hookrightarrow \P^{N-1}$. Therefore the sheaves in \eqref{boundedness} are all $p_2$ semistable sheaves of dimension $1$ in $\P^{N-1}$. So the notion of boundedness makes sense.
\begin{definition}[flat family of parabolic sheaves]
	\label{flatfamilyofparabolicsheaves}
	Let $T$ be a ${\tt S_{_{g,n}}}$ scheme. A flat family of parabolic sheaves $\mathcal{E}_*$ means a coherent sheaf $\mathcal{E}$ on $\mathcal{X}_T$ flat over $T$ with
	
	$\RNum{1}.$ A Parabolic filtration:
	\begin{equation}
	\label{relparfiltration}
		\mathcal{E}=F^i_1 \mathcal{E}\supset F^i_2 \mathcal{E} \supset F^i_3 \mathcal{E} \supset \cdots \supseteq F^i_{l_i+1} \mathcal{E}=\mathcal{E}(-D^i_T)
	\end{equation}
	such that $\mathcal{E}/F^i_j \mathcal{E}$ are flat over $T$ $\;\forall \;i, \; j$.
	
	$\RNum{2}.$ Along with attached weights:
	\begin{equation}
		0 \le \alpha^i_1 < \alpha^i_2 < \cdots <\alpha^i_{l_i} <1
	\end{equation}
	such that for $\;t \in T$ the restriction of $\mathcal{E}_*$ is a parabolic sheaf on $\mathcal{X}_t$ \eqref{qps} with respect to the divisors $D^i_t$.
\end{definition}
Let $G$ be a reductive group acting on two projective variety $X$ and $Y$. Let $\mathcal{L}, \mathcal{M}$ be two linearization of the $G$ action on $X$ and $Y$ respectively. Then $G$ will have a natural action on $X \times Y$. Let $p_1: X \times Y \rightarrow X$ and $p_2:X \times Y \rightarrow Y$ be the two projections.

\begin{lemma}
	There exist an integer $b \gg0$ such that the following holds:\\
	
	\label{GITlemma}
	$p_2^{-1}Y^{s}(\mathcal{M}) \subseteq (X \times Y)^{s}(p_1^* \mathcal{M}^{\otimes b} \otimes p_2^*\mathcal{L}) \subseteq (X \times Y)^{ss}(p_1^* \mathcal{M}^{\otimes b} \otimes p_2^*\mathcal{L}) \subseteq p_2^{-1}Y^{ss}(\mathcal{M})$
\end{lemma}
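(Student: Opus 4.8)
The plan is to reduce the statement to the Hilbert--Mumford numerical criterion and then to exploit that, $X$ and $Y$ being projective, the numerical functions involved are controlled by finitely much combinatorial data.

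First I would choose $G$-equivariant closed embeddings $X\hookrightarrow\mathbb{P}(V)$ and $Y\hookrightarrow\mathbb{P}(W)$, with $V,W$ finite-dimensional $G$-modules, realizing suitable positive powers of $\mathcal{L}$ and $\mathcal{M}$ as $\mathcal{O}_X(1)$ and $\mathcal{O}_Y(1)$ respectively (we may assume $\mathcal{L},\mathcal{M}$ ample, which is the situation in the application, so that the GIT (semi)stable loci are governed by the numerical criterion). Write $\overline{L}_b$ for the linearization in the statement, namely the tensor product on $X\times Y$ of the $b$-th power of the pullback of $\mathcal{M}$ with the pullback of $\mathcal{L}$. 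The Hilbert--Mumford index is additive over the two factors and positively homogeneous of degree one in the one-parameter subgroup: for every one-parameter subgroup $\lambda$ of $G$,
\[
\mu^{\overline{L}_b}((x,y),\lambda)=b\,\mu^{\mathcal{M}}(y,\lambda)+\mu^{\mathcal{L}}(x,\lambda).
\]
By the numerical criterion on the projective variety $X\times Y$, a point $(x,y)$ is $\overline{L}_b$-semistable (resp.\ stable) if and only if this quantity is $\ge 0$ (resp.\ $>0$) for every nontrivial $\lambda$, and likewise $y$ is $\mathcal{M}$-semistable (resp.\ stable) if and only if $\mu^{\mathcal{M}}(y,\lambda)\ge 0$ (resp.\ $>0$) for every nontrivial $\lambda$.

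The heart of the argument is three uniform estimates, each obtained by conjugating $\lambda$ into a fixed maximal torus and using that $V$ and $W$ have only finitely many weights. Fix a Weyl-invariant norm $\|\cdot\|$ on cocharacters. (i) There is a constant $A>0$ with $|\mu^{\mathcal{L}}(x,\lambda)|\le A\|\lambda\|$ for all $x\in X$ and all $\lambda$, since $\mu^{\mathcal{L}}(x,\lambda)$ is an extremum of the pairings of $\lambda$ with the finitely many weights of $V$. (ii) There is $\varepsilon>0$ such that $\mu^{\mathcal{M}}(y,\lambda)\ge\varepsilon\|\lambda\|$ for every $\mathcal{M}$-stable $y$ and every nontrivial $\lambda$: the function $\mu^{\mathcal{M}}(y,\cdot)$ depends only on the set $S(y)$ of weights of $W$ whose eigenspace the point $y$ meets, stability of $y$ means $0$ lies in the interior of the convex hull of $S(y)$, there are only finitely many possible subsets $S(y)$, and for each the corresponding continuous, positively homogeneous, everywhere-positive function is bounded below on the unit sphere. (iii) Symmetrically, there is $\delta>0$ such that for every $y$ that is not $\mathcal{M}$-semistable there is a nontrivial $\lambda$ with $\mu^{\mathcal{M}}(y,\lambda)\le-\delta\|\lambda\|$, obtained by separating $0$ from the convex hull of $S(y)$ and again using that only finitely many $S(y)$ occur.

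Finally I would put the pieces together. If $y$ is $\mathcal{M}$-stable and $\lambda\ne 1$, then (i) and (ii) give $\mu^{\overline{L}_b}((x,y),\lambda)\ge(b\varepsilon-A)\|\lambda\|>0$ once $b>A/\varepsilon$, so $(x,y)$ is $\overline{L}_b$-stable, uniformly in $x$; this is the first inclusion. The middle inclusion is trivial. If $y$ is not $\mathcal{M}$-semistable, pick $\lambda$ as in (iii); then (i) gives $\mu^{\overline{L}_b}((x,y),\lambda)\le(A-b\delta)\|\lambda\|<0$ once $b>A/\delta$, so $(x,y)$ is not $\overline{L}_b$-semistable, for every $x$; this is the third inclusion. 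Thus any $b>\max(A/\varepsilon,A/\delta)$ works, and the conclusion passes to any closed $G$-invariant subscheme (such as the graph closure $Z$ used later) because GIT (semi)stability for a restricted ample linearization is the restriction of the ambient one. The step I expect to require the most care is estimate (ii): making the lower bound $\mu^{\mathcal{M}}(y,\lambda)\ge\varepsilon\|\lambda\|$ uniform over all stable $y$. A naive argument yields only a bound depending on $y$, which degenerates as $y$ approaches the unstable locus; the finiteness of the list of weight-support types $S(y)$ is exactly what is needed to make a single $b$ work for the entire family.
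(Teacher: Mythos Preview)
Your argument is correct. The paper does not actually prove this lemma: its ``proof'' consists entirely of citations to Seshadri, Thaddeus, and Schl\"{u}ter. What you have written is precisely the classical argument that those references contain---reduce via Hilbert--Mumford to one-parameter subgroups in a fixed maximal torus, use additivity of the numerical function under exterior tensor product, and extract uniform bounds from the finiteness of the weight sets of the ambient linear representations. The only point to sharpen is in your estimate~(ii): when you conjugate a general $\lambda$ into the fixed torus $T$, the relevant weight-support set is $S(g\cdot y)$ for the conjugating element $g$, not $S(y)$ itself; but since $G$-stability is $G$-invariant and implies $T$-stability, every $S(g\cdot y)$ still has $0$ in the interior of its convex hull, and the finiteness argument goes through unchanged. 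So there is nothing substantive to compare---you have supplied the proof the paper chose to outsource.
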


\begin{proof}
	This result has been mentioned in several places. This goes back to \cite[Proposition 5.1]{MR309940}. See also  $\;$\cite[ Proposition 2.18, Proposition 3.3.1]{MR1304906}, $\;$\cite[Proposition 5.1.1]{schluter2011universal}.
\end{proof}

\section{Some lemmas from Nagaraj-Seshadri}
	
\subsection{Gieseker functor}\label{Giefunctor} The Gieseker functor has been defined in \cite{MR739786}, \cite[Definition 7]{MR1687729}. The basic notations has been defined in the introduction of the subsection \eqref{reldivGie}. We recall $\mathcal{X}$ over $\tt{S}_{_{g, n}}$ is the local universal family for the moduli problem of $\overline{M}_{_{g,n}}$ \eqref{localunifamily}. Let $\mathcal{G}=\mathcal{G}(r, d)$ be the functor
	\begin{center}
			$\mathcal{G}: \text{Sch}/{\tt S_{_{g,n}}} \to \text{Sets}$ 
	\end{center}
\begin{center}
	$\mathcal{G}(T)= $  Set of closed subschemes $\Delta \hookrightarrow \mathcal{X}\times_S T \times_S \text{Gr}(V_l, r)$
\end{center}
such that it satisfies the following property: \\

$1.$ The induced projection morphism $\Delta \rightarrow T \times_S \text{Gr}(V_l, r)$ is also a closed immersion.\\

$2.$ The family $\Delta \rightarrow T$ is a flat family of curves. For a given $t \in T$ maps to $s \in {\tt S_{_{g,n}}}$ such that $\Delta_t$ is a marked semi-stable curve \eqref{markedGiecurve} and the induced morphism $\Delta_t \rightarrow \mathcal{X}_s$ is the collapsing map to its marked stable model. In addition the closed subscheme $\Delta_t \hookrightarrow \mathcal{X}_s \times_{\mathbb{C}} \text{Gr}(V_l, r)$ has Hilbert polynomial $\tt{p}(t)$. The Hilbert polynomial is defined with respect to the line bundle $\tt{det}$$(E_t)$ where $E_t$ is pullback of the tautological quotient bundle of $\text{Gr}(V_l, r)$.\\

$3.$ The vector bundle $E_t$ on $\Delta _t$ has rank $r$, degree $d$ such that $\text{dim}\ V_l=d+r(1-g)$.\\
The bundle $E_t$ has a natural quotient representation $V_l \otimes \mathcal{O}_{\Delta_t} \rightarrow E_t \rightarrow 0$ which is the pullback of the universal short exact sequence on $\text{Gr}(V_l, r)$. The induced map in cohomology has to be an isomorphism $V_l \cong \tt{H}^0(E_t)$. This implies $\tt{H}^1(E_t)=0$.

\begin{proposition}
	\label{boundednessproposition}
	Let $C$ be a marked semistable curve and $\tilde{C^j}$ be the closure $\overline{C \setminus R^j}$. Recall that $\tilde{C'}$ is the partial normalization $\overline{C \setminus {\cup R^j}}$ of the marked stable curve $C'$. Let $E$ be a strictly positive vector bundle \eqref{markedGiecurve} on the curve $C$ which satisfies the following conditions\\
	
	$1.$ We have the cohomology vanishing $\tt{H}^1(\tilde{C^j}, I_{p_1^j, p_2^j} E_{|\tilde{C^j}})=0$. In particular this implies that $\tt{H}^0(\tilde{C^j}, E|_{\tilde{C^j}}) \rightarrow E_{p_1^j} \oplus E_{p_2^j}$ is surjective.\\
	$2.$ The canonical map $ \tt{H}^0(\tilde{C^j}, I_{p_1^j, p_2^j} E|_{\tilde{C^j}}) \rightarrow I_{p_1^j, p_2^j} E|_{\tilde{C^j}}/I^2_{p_1^j, p_2^j} E|_{\tilde{C^j}}$ is surjective.\\
	$3.$ The canonical map $ \tt{H}^0(\tilde{C'}, I_Z E|_{\tilde{C'}}) \rightarrow  E|_{\tilde{C'}}/I_x^2 E|_{\tilde{C'}}$ is surjective for $x \in \tilde{C'} \setminus Z$, where $Z=\cup\{p_1^j, p_2^j\}$.\\
	$4.$ The canonical map $ \tt{H}^0(\tilde{C'}, I_Z E|_{\tilde{C'}}) \rightarrow  E_{s_1} \oplus E_{s_2}$  is surjective for all pair of points $s_1, s_2 \in \tilde{C'} \setminus Z$ such that $s_1 \ne s_2.$\\
	
	Then we will have $\tt{H}^1(C, E)=0$. The global sections $\tt{H}^0(E)$ generates $E$ and the natural morphism $C \rightarrow \tt{Gr}(H^0(C, E), r)$ is a closed immersion.
	
\end{proposition}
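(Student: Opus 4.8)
The plan is to exploit the decomposition of $C$ into the partial normalization $\tilde{C'}$, which carries all the stable components, and the chains $R^1,\dots,R^c$ of projective lines contracted by $\pi$, and to run all three assertions through Mayer--Vietoris (restriction) sequences: positivity of $E$ handles everything internal to a chain, where each component is $\mathbb{P}^1$ with $E|_{R^j_\iota}$ globally generated, while the hypotheses (1)--(4) feed in exactly the surjectivities needed at the attaching points $p^j_1,p^j_2$ and on $\tilde{C'}$.

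For $\text{H}^1(C,E)=0$ I would peel the chains off one at a time. Setting $C_k=\overline{C\setminus(R^1\cup\dots\cup R^k)}$, so that $C_c=\tilde{C'}$ and $C_{k-1}=C_k\cup R^k$ with $C_k\cap R^k=\{p^k_1,p^k_2\}$, the sequence
\begin{equation*}
0 \to E|_{C_{k-1}} \to E|_{C_k}\oplus E|_{R^k} \to E_{p^k_1}\oplus E_{p^k_2} \to 0
\end{equation*}
reduces the vanishing for $C_{k-1}$ to: the inductive vanishing on $C_k$; $\text{H}^1(R^k,E|_{R^k})=0$, which holds because $E$ is globally generated on each $\mathbb{P}^1$ in the chain, so the chain's normalization sequence forces it (one prescribes a single node value at a time, walking along the chain); and surjectivity of $\text{H}^0(C_k,E)\oplus\text{H}^0(R^k,E|_{R^k})\to E_{p^k_1}\oplus E_{p^k_2}$, for which it suffices that $\text{H}^0(C_k,E)\to E_{p^k_1}\oplus E_{p^k_2}$ be onto --- and this follows from hypothesis (1), applied to $\tilde{C^k}\supseteq C_k$, since restriction of sections to $\{p^k_1,p^k_2\}$ factors through $\text{H}^0(\tilde{C^k},E)$. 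The base case $\text{H}^1(\tilde{C'},E|_{\tilde{C'}})=0$ (and $\text{H}^1(\tilde{C'},I_Z E|_{\tilde{C'}})=0$) is extracted from the cohomological content of (1) and (3).

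Global generation of $E$ by $\text{H}^0(C,E)$ then follows. A section of $I_Z E|_{\tilde{C'}}$ extends by zero along the chains to a section of $E$ on $C$, so hypothesis (3) --- which gives $\text{H}^0(\tilde{C'},I_Z E|_{\tilde{C'}})\twoheadrightarrow E_x$ for $x\in\tilde{C'}\setminus Z$ --- shows $E$ is generated at every point of $\tilde{C'}\setminus Z$; at a point of a chain $R^j$ (including $p^j_1,p^j_2$) the restriction $\text{H}^0(C,E)\to\text{H}^0(R^j,E|_{R^j})$ is surjective, its kernel $I_{p^j_1,p^j_2}E|_{\tilde{C^j}}$ having vanishing $\text{H}^1$ by (1), and $E|_{R^j}$ is globally generated by positivity. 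Hence $\phi\colon C\to\text{Gr}(\text{H}^0(C,E),r)$ is well defined with $E\cong\phi^*\mathcal{Q}$. Since $C$ is proper and $\text{Gr}$ separated, to see $\phi$ is a closed immersion it suffices to check injectivity on closed points and surjectivity of $\widehat{\mathcal{O}}_{\text{Gr},\phi(x)}\to\widehat{\mathcal{O}}_{C,x}$ for all $x$. Injectivity at $x\neq y$ in $\tilde{C'}\setminus Z$, and the cotangent surjectivity at such $x$, come from hypotheses (4) and (3) respectively (again combined with ``extend by zero''); at and near the nodes $p^j_1,p^j_2$ and along the chains one uses hypothesis (2) for the derivative along the $\tilde{C^j}$-branch together with strict positivity of $E$ along $R^j$ --- a summand of positive degree supplying a section vanishing at a prescribed point to order one and separating it from a second point.

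The step I expect to be the main obstacle is precisely the closed-immersion claim restricted to the chains $R^j$: on a $\mathbb{P}^1$ carrying a trivial summand of $E$, the crisp cohomological surjectivity $\text{H}^0\twoheadrightarrow E_x\oplus E_y$ available on $\tilde{C'}$ can genuinely fail, so one must argue directly with the kernels of the tautological evaluation maps and push the information in hypotheses (1) and (2) through the chain component by component. This delicate bookkeeping is exactly what the appendices of \cite{MR1687729} (see also \cite{MR2106123}, \cite{MR739786}) carry out, and I would follow that.
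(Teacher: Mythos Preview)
Your plan is correct and aligns with the paper's own treatment: the paper does not give an independent argument but simply refers the reader to \cite[Proposition 4]{MR1687729}, and your sketch is precisely an outline of the Nagaraj--Seshadri strategy (Mayer--Vietoris along the decomposition into $\tilde{C'}$ and the chains $R^j$, positivity on each $\mathbb{P}^1$, and hypotheses (1)--(4) supplying the needed surjectivities at the attachment points and on $\tilde{C'}$), with the delicate closed-immersion bookkeeping on the chains deferred to the same source. One small wrinkle worth tightening: in your inductive step you invoke hypothesis (1) on $\tilde{C^k}\supseteq C_k$ to get $\text{H}^0(C_k,E)\twoheadrightarrow E_{p^k_1}\oplus E_{p^k_2}$, but the inclusion goes the wrong way for that deduction --- either run the induction in the order Nagaraj--Seshadri do (so that at each stage you are working on a $\tilde{C^j}$ rather than on a curve with several chains already removed), or note that $\text{H}^0(R^k,E|_{R^k})$ already surjects onto one endpoint fiber by positivity, which together with (1) is enough.
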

\begin{proof}
	This can be proved using similar methods of \cite[Proposition 4]{MR1687729}.
\end{proof}

\begin{lemma}
	\label{NSlemma}
	Suppose we have the following commutative diagram
	\begin{equation}
	\begin{tikzcd}
	Z \arrow[rr, "\pi"] \arrow[dr, "p"]
	& & W \arrow[dl, "q"]\\%
	& T
	\end{tikzcd}
	\end{equation}
	such  that $p$  and  $q$  are projective  morphisms  (which  implies $\pi$  is proper), $\pi_* \mathcal{O}_Z = \mathcal{O}_W$ 
	and $p$ is flat.  Let $E$  be  a  vector  bundle  on $Z$  such  that $R^i (\pi_t)_* E_t=0$ for $i \ge 1$, then the higher direct images of $E$ behaves well with respect to restriction i.e., 
	$(R^i\pi_*E)_t \cong R^i(\pi_t)_* E_t$ for $i \ge 0$.
\end{lemma}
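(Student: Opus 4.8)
The plan is to deduce the statement from the base change theorem in its general (Tor‑independence) form, exploiting the flatness of $p$ (and of $q$) to make up for the fact that $\pi$ itself is far from flat. First I would reduce to the case $T=\operatorname{Spec}A$ with $A$ Noetherian local and closed point $t$, by restricting to an affine open containing $t$ and then passing to the local ring; shrinking $T$ around $t$ will be harmless throughout. Write $i_t\colon W_t\hookrightarrow W$ and $j_t\colon Z_t\hookrightarrow Z$ for the inclusions of the fibres over $t$, and note that $Z_t=Z\times_W W_t=Z\times_T\operatorname{Spec}k(t)$, so there is a single Cartesian square lying over $i_t$. Since $E$ is locally free on $Z$ and $Z$ is flat over $T$ (because $p$ is flat), $E$ is flat over $T$, and therefore $Lj_t^{*}E=E\otimes^{L}_{\mathcal O_T}k(t)=E_t$ is concentrated in degree $0$.

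The heart of the matter is the derived base change isomorphism $Li_t^{*}R\pi_{*}E\xrightarrow{\ \sim\ }R(\pi_t)_{*}E_t$. To obtain it I would verify that the square above is Tor‑independent over $W$, i.e.\ that $\mathcal O_Z\otimes^{L}_{\mathcal O_W}\mathcal O_{W_t}$ is concentrated in degree $0$: using flatness of $q$ (which holds in every situation in which the lemma is applied, $W$ there being a family of stable curves over $T$) one has $\mathcal O_{W_t}=\mathcal O_W\otimes^{L}_{\mathcal O_T}k(t)$, whence $\mathcal O_Z\otimes^{L}_{\mathcal O_W}\mathcal O_{W_t}=\mathcal O_Z\otimes^{L}_{\mathcal O_T}k(t)=\mathcal O_{Z_t}$ by flatness of $p$. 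With Tor‑independence in hand the general base change theorem yields the displayed isomorphism, and feeding in the hypothesis $R^{i}(\pi_t)_{*}E_t=0$ for $i\ge1$ shows that $Li_t^{*}R\pi_{*}E$ is concentrated in degree $0$, equal there to $(\pi_t)_{*}E_t$.

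From this I would extract the individual $\mathcal H^{i}$ by descending induction on the largest $N$ with $R^{N}\pi_{*}E\neq0$ (finite, as $\pi$ is proper). Truncating $R\pi_{*}E$ and applying $Li_t^{*}$, the term $Li_t^{*}\bigl(\tau_{\le N-1}R\pi_{*}E\bigr)$ lives in cohomological degrees $\le N-1$, so the long exact sequence of the truncation triangle gives $(R^{N}\pi_{*}E)|_{W_t}\cong\mathcal H^{N}\bigl(Li_t^{*}R\pi_{*}E\bigr)=0$ as soon as $N\ge1$; by Nakayama the support of the coherent sheaf $R^{N}\pi_{*}E$ then misses the fibre $W_t$, and since $q$ is proper we may shrink $T$ around $t$ so that $R^{N}\pi_{*}E=0$, and repeat. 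This kills $R^{i}\pi_{*}E$ for all $i\ge1$ near $W_t$; then $R\pi_{*}E=\pi_{*}E$, so the degree‑$0$ concentration of $Li_t^{*}\pi_{*}E$ forces $\operatorname{Tor}^{\mathcal O_W}_{>0}(\pi_{*}E,\mathcal O_{W_t})=0$, and hence $(\pi_{*}E)|_{W_t}=i_t^{*}\pi_{*}E=\mathcal H^{0}\bigl(Li_t^{*}R\pi_{*}E\bigr)=(\pi_t)_{*}E_t$. For $i\ge1$ both sides of the asserted isomorphism vanish, which completes the plan.

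The step I expect to be the real obstacle is getting a base change statement for $\pi$ at all: $\pi$ contracts chains of $\mathbb{P}^{1}$'s, so it is not flat and its fibre dimension jumps, and the classical cohomology‑and‑base‑change criteria cannot be applied to it directly — the role of the hypotheses ``$p$ flat'' (and, in applications, ``$q$ flat''), together with $\pi_{*}\mathcal O_Z=\mathcal O_W$ and the fact that the fibres of $\pi$ are trees of rational curves, is precisely to allow one to route everything through the flat morphisms. A more computational alternative, closer in spirit to Nagaraj–Seshadri, would be to resolve $E$ by sheaves of the form $\pi^{*}(\text{line bundle on }W)\otimes\mathcal O_{\pi}(n)$ and reduce by the projection formula to the vanishing $R^{>0}\pi_{*}\mathcal O_Z=0$, but keeping track of the twists $\mathcal O_{\pi}(n)$ along the contracted components makes that bookkeeping noticeably heavier.
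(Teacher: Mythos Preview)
Your proof is correct (with the honestly flagged extra hypothesis that $q$ is flat) but follows a genuinely different route from the paper's. The paper never invokes derived base change or Tor--independence: for $i=0$ it simply cites Nagaraj--Seshadri, and for $i>0$ it argues classically by choosing a $q$--ample line bundle $\mathcal{O}_W(1)$, writing $R^i\pi_*E$ via Serre's correspondence as the sheaf associated to the graded module $\bigoplus_{n\ge 0} H^0\bigl(q_*(R^i\pi_*E(n))\bigr)$, and then using the Grothendieck spectral sequence for $p=q\circ\pi$ to identify $q_*(R^i\pi_*E(n))\cong R^ip_*(E[n])$ for $n\gg 0$; the latter is then killed by Grauert's theorem applied to the \emph{flat} morphism $p$, once the hypothesis $R^i(\pi_t)_*E_t=0$ is fed through Leray for $\pi_t$ to get $H^i(Z_t,E_t[n])\cong H^i(W_t,(\pi_t)_*E_t(n))=0$ for large $n$. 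So both arguments dodge the non--flatness of $\pi$ by routing through $p$, but yours does it in one stroke at the derived level and treats all $i$ uniformly, at the cost of assuming $q$ flat to secure Tor--independence; the paper's argument stays with spectral sequences and Serre/Grauert vanishing, avoids any explicit flatness hypothesis on $q$, but is longer and handles $i=0$ separately by reference. Incidentally, your argument never uses the hypothesis $\pi_*\mathcal{O}_Z=\mathcal{O}_W$; in the paper that condition enters only through the cited $i=0$ case.
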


\begin{proof}
	For the proof of $i=0$ case we refer to \cite[Lemma 4]{MR1687729}. Using similar argument with little modification one can prove that the sheaf $R^i\pi_*E=0$ for $i >0$. Here we will briefly mention the arguments for the case of $i >0$.\\
	
	Let $\mathcal{O}_{W}(1)$ be a relative ample line bundle. Let $\mathcal{O}_{Z}[1]$ be the pullback $\pi^*\mathcal{O}_{W}(1)$. Let $E[n]$ be the tensor product $E \otimes \mathcal{O}_{Z}[n]$. Without loss of generality we can assume $T=\text{spec}(A)$. Since $q$ is projective by Serre correspondence $R^i\pi_*E$ is the sheaf associated to the graded module $\oplus_{n \ge 0} \text{H}^0(q_*(R^i\pi_*E(n)))$.\\
	
	We will use the Grothendieck spectral sequence for composite of two functors $E_2^{i,j}=R^i q_*(R^j \pi_* (E[n])) \implies R^{i+j} p_*(E[n])$. From the spectral sequence it follows that $R^i p_*(E[n]) \cong q_*(R^i \pi_*(E[n]))$ for all $i \ge 0$ and for sufficiently large $n$. Therefore we have 
	\begin{equation}
	\label{spectral sequence}
		\oplus_{n \gg 0} \text{H}^0(q_*(R^i\pi_*E(n))) \cong \oplus_{n \gg 0} \text{H}^0(R^i p_*(E[n]))
	\end{equation}
	
	Using the projection formula We will have 
	\begin{equation}
		R^i (\pi_t)_* (E_t[n]) \cong (R^i (\pi_t)_* E_t)(n) =0
	\end{equation}
	 for $i>0$ and for $n \ge 0$. This implies $H^i(Z_t, E_t[n]) \cong H^i(W_t, ((\pi_t)_*E_t)(n))$ for all $i \ge 0$. Since $\mathcal{O}_{W}(n)$ is relatively ample, there exist $n_0$ such that for $i>0$, $H^i(W_t, ((\pi_t)_*E_t)(n))=0$ for $n \ge n_0$ and for all $t$. This in turn implies that $H^i(Z_t, E_t[n])=0$ for $n \ge n_0$ and $i >0$. From the semicontinuity theorem of Grauert it follows $R^i p_*(E[n])=0$ for $i>0$ and $n \ge n_0$. From \eqref{spectral sequence} it follows that graded module associated to the sheaf $R^i\pi_*E$ vanishes after finitely many terms of the grading. Therefore the associated sheaf $R^i\pi_*E=0$.

\end{proof}
	
	\bibliographystyle{plainurl}
	\bibliography{universal_moduli_Revised_final_draft_20-05-22.bib}

\end{document}